\theoremstyle{plain} 
\newtheorem{thm}{Theorem}[section]
\newtheorem{lem}[thm]{Lemma}
\newtheorem{prop}[thm]{Proposition}
\newtheorem{cor}[thm]{Corollary}
\newtheorem{ques}{Question}
\theoremstyle{definition}
\newtheorem{defn}{Definition}[section]
\newtheorem{exmp}{Example}[section]
\theoremstyle{remark}
\newtheorem{rem}[thm]{Remark}
\newtheoremstyle{case}{}{}{}{}{}{:}{ }{}
\theoremstyle{case}
\DeclareMathOperator{\Ima}{Im}
\DeclareMathOperator{\Hom}{Hom}
\DeclareMathOperator{\End}{End}
\DeclareMathOperator{\codim}{codim}
\DeclareMathOperator{\Def}{Def}
\DeclareMathOperator{\Gr}{Gr}
\DeclareMathOperator{\Pic}{Pic}
\DeclareMathOperator{\Ext}{Ext}
\DeclareMathOperator{\MHS}{MHS}
\DeclareMathOperator{\AJ}{AJ}
\DeclareMathOperator{\Sym}{Sym}
\DeclareMathOperator{\Var}{Var}
\DeclareMathOperator{\Hdg}{Hdg}
\DeclareMathOperator{\coker}{coker}
\DeclareMathOperator{\NEW}{R}
\DeclareMathOperator{\Span}{span}
\DeclareMathOperator{\Alb}{Alb}
\DeclareMathOperator{\Id}{Id}
\DeclareMathOperator{\GL}{GL}
\newcommand{\abs}[1]{\left\vert#1\right\vert}
\newcommand{\Z}{\mathbb{Z}}
\newcommand{\C}{\mathbb{C}}
\newcommand{\Pp}{\mathbb{P}}
\newcommand{\Oo}{\mathscr{O}}
\newcommand{\Fg}{\mathcal{F}_g^R}
\newcommand{\Kg}{\mathcal{K}_g^R}
\newcommand{\Mg}{\mathcal{M}_g}
\begin{document}

\title[]{Infinitesimal invariants of mixed Hodge structures}
 \author{Rodolfo Aguilar, Mark Green and Phillip Griffiths}
      \begin{abstract} We introduce the notion of infinitesimal variations of mixed Hodge structures and invariants associated to them. We describe these invariants in the case of a pair $(X,Y)$ with $X$ a Fano 3-fold and $Y$ a smooth anticanonical K3 surface and in more detail in the case when $X$ is a cubic threefold. In this last setting, we obtain a generic global Torelli theorem for pairs.
      \end{abstract}
      
      
\address{Department of Mathematics, University of Miami, Coral Gables, FL 33146 \hfill\break\indent
{\it E-mail address}\/: {\rm aaguilar.rodolfo@gmail.com}\hfill\break\indent
\hfill\break\indent
Department of Mathematics, University of California at Los Angeles,\hfill\break\indent Los Angeles, CA 90095\hfill\break\indent
 {\it E-mail address}\/: {\rm mlg@ipam.ucla.edu}\hfill\break\indent 
\hfill\break\indent 
  \parbox{\linewidth}{Department of Mathematics, University of Miami, Coral Gables, FL 33146, and \hfill\break\indent Institute for Advanced Study, Einstein Drive, Princeton, NJ 08540\hfill\break\indent
{\it E-mail address}\/: {\rm pg@ias.edu}}   
}

      
\maketitle

\tableofcontents
\section{Introduction}
Infinitesimal variations of Hodge structures, or IVHS for short, has been used for some time to provide a tool to connect variational Hodge theory and geometry. They were introduced and parts of the theory developped in \cite{CGGH83,GH83,G83}. For additional results and further applications see \cite{PS82}, \cite{G94}, \cite[Partie VI]{V02}, \cite[Chapter 5]{CMSP17}.

In practice it provides connection 

\[\begin{tikzcd}
\overset{\text{geometry}}\bullet \ar[dash]{rr} \ar[dash]{rd}& & \overset{\text{Hodge theory}}\bullet\ar[dash]{ld} \\
 & \overset{algebra}\bullet & 
\end{tikzcd}
 \]
 
\noindent As such it is complementary to the use of Hodge theory in the cohomology of algebraic varieties. Since IVHS uses only the complex vector space $V$ and not the $\mathbb{Q}$-structure it may seem a bit surprising that it has proved as useful as it has for geometric applications. Since it does not involve the $\mathbb{Q}$-structure of $V$ it is not expected to be able to lead to existence results. However it can be used for generic global uniqueness results such as that of Donagi-Green \cite{DG84} and Theorem \ref{thm:Intro2} below.

Here, our first goal is to define the general notion of infinitesimal variations of mixed Hodge structure, or IVMHS for short. We are particularly interested in it as a method to study the geometry of smooth pairs $(X,Y)$, that is, both $X$ and $Y$ are smooth and $Y\subset X$. In this case the mixed Hodge structure on $H^n(X\setminus Y)$ can be seen as a two-step extension of pure Hodge structures and the structure of the mixed period domain is particularly simple \cite{U84,C85}. We can define an infinitesimal invariant of this extension class by using the derivative of the period map; in spirit, this is very similar to the infinitesimal invariant of a normal function \cite{G89}. To identify the basic properties of this invariant can be seen as the second main goal of this note.

The third goal is to extensively illustrate the structure of the IVMHS in the following setting: we specialize to the case when $X$ is a smooth projective $n$-dimensional Fano and $Y$ is a smooth  anti-canonical section. We obtain a $n$-symmetric form out of this pair and we prove an infinitesimal Torelli theorem for these anti-canonical pairs. This is done in section \ref{sec:IVMHS}. Note that an infinitesimal Torelli theorem for anti-canonical pairs $(X,D)$ with $X$ a surface and $D$ a SNC-divisor is proved in \cite[Thm 3.16]{F16}.

When $X$ is a Fano threefold and $Y$ an anticanonical K3 surface, the $3$-symmetric form can be seen as an analog of the Yukawa coupling in the log Calabi-Yau case. This form appeared in work of Markusevich, see \cite{M08}.  We can then relate the infinitesimal invariant of the extension class of $H^3(X\setminus Y)$ to the first derivative of this cubic form. Moreover, we can relate this infinitesimal invariant with an Abel-Jacobi type map. This is done in Section \ref{sec:FK3}.

If we let $X$ be the cubic threefold, we can use the description of the Hodge filtration of hypersurfaces as described in \cite{CG80} or \cite{G69} to get more information about the cubic form and the infinitesimal invariant of the extension class. In particular, we obtain conditions for the triviality of the cubic form, for its non-degeneracy and smoothness. With the computer aid of Macaulay2, we can prove the following theorem. 

\begin{thm} For $X$ a cubic threefold and $Y$ an anticanonical smooth section, both of them general, the cubic $C$ is smooth.
\end{thm}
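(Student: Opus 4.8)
The proof rests on a single structural observation: \emph{smoothness of a projective hypersurface is a Zariski-open condition on the coefficients of its defining form}. By the Jacobian criterion, the hypersurface $\{C=0\}\subset\Pp(V)$ is singular precisely when the partials $\partial C/\partial v_0,\dots,\partial C/\partial v_N$ have a common projective zero, and by elimination theory the set of coefficient vectors admitting such a zero is closed; equivalently, it is the vanishing locus of the discriminant of $C$. Thus the pairs $(X,Y)$ yielding a smooth $C$ form a Zariski-open subset of the parameter space, and it suffices to exhibit a \emph{single} pair for which $C$ is smooth.

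First I would make the cubic $C$ completely explicit. Since $X=\{F=0\}$ is a cubic threefold we have $-K_X=\Oo_X(2)$, so an anticanonical section is $Y=X\cap\{G=0\}$ for a quadric $G$. Using the description of the Hodge filtration of $X$ via its Jacobian ring, together with the formulas for the pair $(X,Y)$ established in Section~\ref{sec:FK3}, the coefficients of $C$ are polynomial functions of the coefficients of $F$ and of $G$, where the variables $v_i$ on $V$ are identified with a basis of a suitable graded piece of the Jacobian ring. Composing with the discriminant, the map "pair $(F,G)\mapsto\mathrm{disc}(C)$" is itself a polynomial in the coefficients of $(F,G)$; its non-vanishing is Zariski-open, so I need only verify it is not identically zero.

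The parameter space of pairs — cubics $F$ with $X$ smooth, together with quadrics $G$ with $Y$ smooth — is irreducible, and the locus on which $C$ is well defined and $(X,Y)$ is a smooth anticanonical pair is open and dense in it. Hence exhibiting one pair with $C$ smooth establishes the statement for the general pair. I would therefore choose a convenient explicit pair $(F_0,G_0)$ with integer coefficients, compute the associated cubic $C$ from the Jacobian-ring formulas, and verify smoothness in Macaulay2: namely that the ideal generated by the partials $\partial C/\partial v_i$ saturates to the unit ideal (equivalently, contains a power of the irrelevant ideal), so that $\{C=0\}$ has no singular point.

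The main obstacle is computational and organizational rather than conceptual. One must correctly transcribe the intrinsic cubic $C$ — assembled from the cup products and the residue/extension data of $H^3(X\setminus Y)$ — into an honest polynomial in coordinates on $V$, carefully tracking the identifications with graded pieces of the Jacobian ring; a slip here yields a different cubic whose smoothness is meaningless. A secondary subtlety is confirming that $(F_0,G_0)$ lies in the open locus where the whole construction is nondegenerate, so that its smoothness genuinely certifies the generic case. Finally, if the Gröbner computation is run over a finite field $\mathbb{F}_p$ for efficiency, the conclusion still transfers to characteristic zero: smoothness of the reduction shows that the integer discriminant of $C$ is nonzero modulo $p$, hence nonzero over $\mathbb{Q}$, so the same $C$ is smooth over $\C$.
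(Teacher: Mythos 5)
Your proposal is correct and follows essentially the same route as the paper: identify $C$ explicitly as the element of $S^3V^*$ dual to the multiplication map $S^2V\otimes S^3V\to S^5V/J_F\cong\C$ (so that $C^\perp=(J_{F,5}:Q)$), verify smoothness for a single random pair $(F,Q)$ by a Macaulay2 computation over a finite field (the paper checks $S^6V^*/J_C=0$ rather than computing a discriminant, but this is the same Jacobian criterion), and conclude for the general pair by openness. The only point to be careful about, which you flag yourself, is pinning down the precise Jacobian-ring formula for $C$; the paper supplies this via Lemma \ref{lem:ZeroCubic} and Proposition \ref{prop:Jf4:Q}.
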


Then we find that the dependence of the cubic form $C$ on the coefficients of the polynomial $F$ defining $X$ is of very high degree. After finding explicit formulas for the tangent space at several deformation spaces such as deformation of pairs $(X,Y)$, deformations in the normal direction of $Y$, we prove that the infinitesimal invariant for the MHS of $H^3(X\setminus Y)$ is non-zero. This implies that this MHS is non-split. 

We end section \ref{sec:CubThree} with a proof of a generic Torelli theorem for pairs.

\begin{thm}\label{thm:Intro2} Let $X$ be the cubic $3$-fold and $Y$ an anticanonical $K3$ surface. Assume that they are generic. Then the MHS on $H^3(X\setminus Y)$ determines the pair $(X,Y)$.
\end{thm}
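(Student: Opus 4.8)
The plan is to recover the pair $(X,Y)$ from the MHS on $H^3(U)$, where $U = X\setminus Y$, in two stages: first reconstruct $X$ and $Y$ as abstract varieties from the associated graded, then reconstruct the embedding $Y\hookrightarrow X$ from the extension data. For the first stage, I would exploit that the residue/Gysin sequence simplifies: since $Y$ is a K3 surface, $H^1(Y)=0$, so the restriction $H^3(X)\to H^3(U)$ is injective and $\Gr^W_3 H^3(U)\cong H^3(X)$ as a polarized Hodge structure, while $\Gr^W_4 H^3(U)$ is the kernel $K$ of the Gysin map $H^2(Y)(-1)\to H^4(X)$, a sub-Hodge structure of $H^2(Y)(-1)$. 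The global Torelli theorem for cubic threefolds (Clemens--Griffiths) then recovers $X$ up to isomorphism from $\Gr^W_3$, and the global Torelli theorem for K3 surfaces recovers $Y$ from $\Gr^W_4$, since for generic $(X,Y)$ the subspace $K$ contains the transcendental lattice of $Y$ and the Picard rank is minimal.

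At this point $X$ and $Y$ are determined, and since $Y$ is an anticanonical section it is cut on $X$ by a quadric $Q=\{G=0\}$, so the remaining ambiguity is the choice of $G\in\Pp(H^0(X,\Oo_X(2)))$ modulo $\Aut(X)$, which is finite for generic $X$. To pin down $G$ I would use the class of the extension $0\to H^3(X)\to H^3(U)\to K\to 0$ in $\Ext^1_{\MHS}(K,H^3(X))$. By Carlson's description this $\Ext$ group is a generalized intermediate Jacobian, and the extension class is the value of an Abel--Jacobi type map of the kind related to the infinitesimal invariant in Section \ref{sec:FK3}. Differentiating along the family of pairs, the infinitesimal Torelli theorem for anticanonical pairs proved in Section \ref{sec:IVMHS}, together with the smoothness of the cubic $C$ and the non-vanishing of the infinitesimal invariant established in Section \ref{sec:CubThree}, shows that the derivative of the period map for pairs is injective; hence this period map is an immersion on the moduli of pairs.

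It remains to upgrade local injectivity to generic injectivity and, concretely, to reconstruct $G$. Here I would follow the Donagi-type philosophy: using the explicit description of the Hodge filtration of the cubic threefold via the Jacobian ring and the Macaulay2 computations, reconstruct from the infinitesimal invariant the multiplication maps that recover the Jacobian ideal of $F$ (hence $F$ up to linear change of coordinates) and then the quadric $G$ up to scalar. The main obstacle is precisely this passage from the infinitesimal Torelli statement to a genuinely global generic one: the extension class lives in a transcendentally defined group, and extracting from it the algebraic datum of the embedding --- equivalently, showing that the generically finite fibers of the pair period map are single points --- is the crux. I expect this to require combining the non-degeneracy of $C$ with a symmetrizer-type reconstruction, or alternatively a degree/monodromy argument bounding the fiber, to conclude that for generic $(X,Y)$ the MHS on $H^3(X\setminus Y)$ determines $G$, and hence the pair.
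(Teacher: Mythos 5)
Your first stage (recovering $X$ and $Y$ as abstract varieties) is a legitimate alternative to the paper's route: the paper never invokes Clemens--Griffiths or K3 global Torelli, but instead works variationally, recovering from the $1$-jet of the period map the subspaces $\ker(T_{\Fg}\to T_{D_Y})\cong H^1(\Omega_X^2)$ and $\ker(T_{\Fg}\to T_{D_X})\cong H^0(N_{Y/X})$, and from these the cubic form $C$ up to scale, with $C^\perp=(J_{F,5}:Q)\subset S^3V$ by Lemma \ref{lem:spanC}. Either route is acceptable for that part.

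The genuine gap is your second stage. You correctly identify that the crux is extracting the quadric $G=Q$ from the Hodge-theoretic data, but you only say you \emph{expect} a symmetrizer-type reconstruction or a degree/monodromy argument to do it; no such argument is given, and this is exactly where the content of the theorem lies. The paper's resolution is concrete and second-order: for $\nu\in H^0(N_{Y/X})\cong S^2V/Q$ one differentiates the subspace $C^\perp=(J_{F,5}:Q)\subset S^3V$ along $\nu$ and shows that
$$\bigl\{P\in(J_{F,5}:Q)\ \bigm|\ \tfrac{\partial P}{\partial\nu}=0\ \text{for all}\ \nu\bigr\}=(J_{F,5}:m^2)=J_{F,3},$$
so that $J_F$, and hence $F$ by Mather--Yau, is recovered; then a dimension count shows $(J_{F,5}:Q)$ determines $Q$ up to scalar (if $(J_{F,5}:Q)=(J_{F,5}:Q')$ with $Q,Q'$ independent, then $Q\cdot(J_{F,5}:Q)\subseteq(QQ')_5$, which has dimension $5$, while $(J_{F,5}:Q)$ has dimension $34$ --- a contradiction). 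Without these steps, or a worked-out substitute, the proposal does not prove the theorem. Note also that your plan to read the embedding off the extension class $e\in\Ext^1_{\MHS}(K,H^3(X))$ at a single point is not what the paper does: its argument uses first- and second-order derivatives of the period map, which is why the authors describe the result as recovering $(X,Y)$ from the $2$-jet of the period maps rather than from the single MHS in isolation.
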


In fact, our proof is construtive. It utilizes a second order invariant involving the cubic form. Thus, we can rephrase the theorem as saying that we can recover the pair $(X,Y)$ from the $2$-jet of the period maps for $X,Y, X\setminus Y$ for $X,Y$ generic. To the best of our knowledge, the only instance where some type of Torelli theorem for pairs was known, other than infinitesimally, was the surface anti-canonical pairs as in \cite{GHK15}, see also \cite{F16}.

There is a vast literature on the subject of Fano varieties, log-Calabi Yau pairs and the geometry of cubic hypersurfaces, see for example \cite{I99, H23} and references there-in. Here we have tried to illustrate the use of IVMHS in some geometric questions concerning these varieties.

\section{Infinitesimal variations of mixed Hodge structures}\label{sec:IVMHS}

\subsection{Definitions}

\begin{defn} A Hodge structure  $(V,F^\bullet)$ of weight $n$ consists of a finite-dimensional $\C$-vector space $V$ and an decreasing filtration $F^\bullet$ such that 
$$
\left\lbrace
\begin{array}{l}
F^n\subset F^{n-1}\subset \ldots \subset F^1\subset F^0=V\\
F^p\oplus \bar{F}^{n-p+1}\overset{\sim}\to V .
\end{array}\right.
$$

\end{defn}

A polarization $Q:V\times V\to \C$ is a bilinear form satisfying the Hodge bilinear relations.

By considering the set of filtrations $\{F'^\bullet\}$ of $V$ such that the $\dim F'^p=\dim F^p$ and that satisfy the Hodge bilinear relations with respect to $Q$, we obtain the so-called \emph{period domain} $D$.

When we have a variation of Hodge structures or VHS, see \cite{CMSP17}, over a quasi-projective smooth base $S$, there is associated a \emph{period map}
$$\Phi:S\to \Gamma\backslash D. $$

For $s\in S$, let $T:= T_s S$ be the tangent space at this point. We obtain a map $\Phi_s:T\to T_{\Phi(s)}D$ that gives a map
$$T\overset{\Phi}\to F^{-1}\End(V)/ F^0 \End(V). $$

\begin{defn} An Infinitesimal Variation of Hodge Structures or IVHS is a triple $(V, F^\bullet, T)$ where $T$ is a complex 
vector space, $V$ is a $\Sym T$-module, $F^\bullet$ is a decreasing filtration and $T\otimes F^p\to F^{p-1}$.
\end{defn}

\noindent We refer to \cite{G94} for a gentle introduction to IVHS, applications and relations with geometry.

\begin{rem}Usually $V$ is a $\mathbb{Q}$-vector space and we use $V_{\C}=V\otimes \C$. For our purposes here we can take $V$ to be a complex vector space.
\end{rem}
\begin{rem} Setting $H^{p,q}= F^p/F^{p+1}$ with $p+q=n$ equal the weight, we have that $\oplus^{p+q=n}H^{p,q}$ is a graded $\Sym T-$module. Frequently, IVHS passes to this instead of using $V$. This is similar to taking the Higgs bundle associated to a VHS. In principle, some information is lost.
\end{rem}

\begin{defn} A mixed Hodge structure is a triple $(V,W_\bullet, F^\bullet)$ with $W_\bullet$ an increasing filtration of $V$ and such that $(\Gr_k^W(V),F^\bullet\cap W_{k}/W_{k+1})$ is a Hodge structure of weight $k$.
\end{defn}

\begin{defn} An IVMHS is $(V,W_\bullet, F^\bullet, T)$ where $T$ is a complex vector space, $V$ is a $\Sym$ $T$-module, $W_\bullet$ is an increasing filtration, $F^\bullet$ a decreasing filtration and $T\otimes F^p\to F^{p-1}$, $T\otimes W_k\to W_k$ such that
\begin{equation}\label{eq:IVMHS}
(\Gr_k^W V, F^\bullet\cap W_k V/W_{k-1}, T) \text{ is an IVHS}.
\end{equation}
\end{defn}

For both IVHS and IVMHS we have Koszul-type groups. For IVMHS, the filtration $W_\bullet$ gives a spectral sequence whose $E_1$-term arises from (\ref{eq:IVMHS}). 

\subsection{Mixed Hodge structures of length two}
Let $B$ be a mixed Hodge structure of length two
\begin{equation}\label{eq:MHSlength2}
0\to A\to B\to C\to 0.
\end{equation}
We have that $A=W_n(B), C=\Gr_{n+1}^W(B)$. In this case, $B$ has a special structure for the period domains $D_1,D_2$ of the graded pieces $A,C$ in relation to the usual fibration 
\[\begin{tikzcd} D_1 \ar[r] & D \ar[d]\\
 & D_2 \end{tikzcd} \text{ given by }\begin{tikzcd} B\ar[d] \\ \Gr_{n+1}^W(B)=C, \end{tikzcd}\]
 there is a map 
 \[\begin{tikzcd}[contains/.style = {draw=none,"\in" {description},sloped}]
D\arrow[r]  & D_1\times D_2  \\
B \ar[r]\ar[u,contains] &  (W_n(B), \Gr_{n+1}^W(B)) \ar[u,contains].
\end{tikzcd}\]
If we have a $\mathbb{Z}$-structure then we have that the set of equivalences of MHS's $B$ in (\ref{eq:MHSlength2}) is 
\begin{equation}\label{eq:ExtTorus}
\Ext^1_{\MHS}(C,A)=\frac{\Hom_{\C}(C,A)}{F^0\Hom_{\C}(C,A)+\Hom_{\mathbb{Z}}(C,A)}.
\end{equation}
See for example \cite{U84,C85}.

This is a compact complex torus which contains a maximal abelian subvariety $E_a\subset \Ext_{\MHS}^1 (C,A)$ whose tangent space lies in the $(0,-1)\oplus (-1,0)$ part of the tangent space to (\ref{eq:ExtTorus}). If we have a VMHS with parameter space $S$ and where $A, C$ are constant the period mapping 
\begin{equation}\label{eq:PeriodMapping}
\Phi: S\to e+E_a
\end{equation}
maps to a translate of $E_a$.

If $S$ is a quasi-projective variety then the period mapping (\ref{eq:PeriodMapping}) extends to any smooth completion $\bar{S}$ and there is a factorization (omitting the $e$ in (\ref{eq:PeriodMapping}))
\begin{equation}
\begin{tikzcd}[contains/.style = {draw=none,"\subset" {description},sloped}]
S\arrow[r] \ar[d,contains]  & E_a  \\
\bar{S} \ar[r] &  \Alb(\bar{S})  \ar[u].
\end{tikzcd}
\end{equation}

Here we only treat the case when the weight of $C$ equals the weight of $A$ plus one. There are interesting cases when the weight of $C$ is bigger than the weight of $A$ plus one.

\begin{rem}\label{rem:CanDec} For a MHS $(V,W_\bullet, F^\bullet)$ we have the canonical Deligne splitting $V=\oplus V^{p,q}$ where $$W_k=\oplus_{p+q\geq k} V^{p,q}, \quad F^p=\oplus_{p'\geq p} V^{p,q} $$
and where $\bar{V}^{p,q}\cong V^{q,p} \mod W_{p+q-2}$. Thus length two MHS's have a canonical $V^{p,q}$ decomposition where $\bar{V}^{p,q}=V^{q,p}$.
\end{rem}

\subsection{Delta Invariant for extensions of Hodge Structures}\label{ss:DeltaInvariantGeneral}
Let $A$ and $B$ be pure Hodge structures. We have a MHS given as an extension by the exact sequence
$$0\to A\overset{f}\to B \overset{g}\to C\to 0. $$
Now, suppose we vary these over a base $S$. The derivative of the period map is
$$d\Phi_B\in T_S^*\otimes F^{-1}\End_\mathbb{C}(B)/F^0\End_\mathbb{C}(B). $$
Then, $d\Phi_B$ has the form $$\left( \begin{array}{ll}
d \Phi_A & \alpha \\
0 & d\Phi_C
\end{array}\right).$$
To compute $\alpha$, let us have 
\[\begin{tikzcd}
B \arrow[r,"g"]  
 & C \arrow[l, bend left=50, "\mu_\mathbb{Z}"] \arrow[l, bend right=50, "\mu_F" above]
\end{tikzcd},
\]
where $\mu_F\in F^0\Hom_\mathbb{C}(C,B), \mu_\mathbb{Z}\in \Hom_\mathbb{Z}(C,B)$. The extension class $e\in \Ext^1_{HS}(C,A)$ is defined by 
$$\mu_F-\mu_\mathbb{Z}=f\circ e .$$

If $\mu_F^\sharp, \mu_\mathbb{Z}^\sharp$ are two different choices of sections, then 
\begin{align*}
&\mu_F^\sharp = \mu_F+f\circ s_F, &&s_F\in F^0 \Hom_\mathbb{C}(C,A),\\
&\mu_\mathbb{Z}^\sharp=\mu_\mathbb{Z}+f\circ s_\mathbb{Z}, &&s_\mathbb{Z}\in F^0\Hom_\mathbb{Z}(C,A).
\end{align*}
Now,
$$F^p B=f\circ F^p A \oplus \mu_F(F^p C). $$
Since $f,g$ are defined over $\mathbb{Z}$, they are constant under $\nabla$. Hence,
$$\nabla_B F^pB=f\circ \nabla_A F^p A\oplus \left(\mu_F\circ \nabla_C F^pC +\nabla\mu_F(F^p C) \right). $$
Then,
$$\alpha=\nabla\mu_F \in T_S^*\otimes F^{-1}\Hom_\mathbb{C}(C,A).$$
The ambiguity that comes from the choice of $\mu_F$ is
$$\nabla(f\circ s_F)=f\circ \nabla s_F. $$
Now, as $\mu_F-\mu_\mathbb{Z}=f\circ e$, we have $\nabla\mu_F=f\circ \nabla e$ because $\nabla \mu_\mathbb{Z}=0$. Using $\mu_F$ to decompose $B=A\oplus \mu_F(C)$,
$$\alpha=\nabla e\in T_S^*\otimes F^{-1}\Hom_\mathbb{C}(C,A).$$
Taking account of the ambiguity,
$$\alpha=\delta e\in \frac{T_S^*\otimes F^{-1}\Hom_\mathbb{C}(C,A)}{\nabla F^0 \Hom_\mathbb{C}(C,A)}.$$
Using $\nabla^2=0$, we have that $\delta e$ sits in the cohomology at the middle of 
\begin{equation}\label{eq:SSExt}
F^0\Hom_\mathbb{C}(C,A)\overset{\nabla}\to T_S^*\otimes F^{-1}\Hom_\mathbb{C}(C,A)\overset{\nabla}\to \wedge^2 T_S^*\otimes F^{-2}\Hom_\mathbb{C}(C,A)
\end{equation}
with $\nabla=\nabla_{\Hom(C,A)}$.

\begin{defn} The infinitesimal invariant $\delta e$ of the extension class $e$ is the  class $\nabla e$ at the middle cohomology in (\ref{eq:SSExt}). 
\end{defn}


\subsection{Fano n-folds}\label{ss:FanoNForm}

In this subsection we let $X$ be a Fano $n$-fold. By taking a global section of $-(n-2)K_X$, we will construct a symmetric $n$-form in $H^1(\Omega_X^{n-1})^\ast$. It will be further studied in sections \ref{sec:FK3} and \ref{sec:CubThree} in the case when $n=3$. In this last case, the cubic form appeared in \cite[Prop. 3.1]{M08}, where Markusevich identifies it to a cubic form introduced by Donagi-Markman in their study of algebraically completely integrable Hamiltonian systems \cite{DM93}.

\begin{lem} Let $X$ be a Fano $n$-fold. Then there exists a map 
$$H^0(-(n-2)K_X)\to S^n H^1(\Omega_X^{n-1})^* $$
\end{lem}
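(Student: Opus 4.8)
The plan is to build the $n$-form by composing a cup product with a wedge contraction and then pairing against the section $s\in H^0(-(n-2)K_X)$ via Serre duality. The starting point is the canonical identification $\Omega_X^{n-1}\cong T_X\otimes\omega_X$ (valid for any rank-$n$ bundle, $\wedge^{n-1}E\cong E^*\otimes\det E$), so that a class in $V:=H^1(\Omega_X^{n-1})$ may be regarded as lying in $H^1(T_X\otimes\omega_X)$.

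Given $a_1,\dots,a_n\in V$, first I would form the iterated cup product
$$a_1\cup\cdots\cup a_n\in H^n\bigl(T_X^{\otimes n}\otimes\omega_X^{\otimes n}\bigr).$$
Next I apply the wedge map $T_X^{\otimes n}\to\wedge^n T_X=\omega_X^{-1}$ on the tangent factors (multiplying the canonical factors) to land in $H^n(\omega_X^{-1}\otimes\omega_X^{\otimes n})=H^n(\omega_X^{\otimes(n-1)})$. Multiplying by $s$ then gives a class in
$$H^n\bigl(\omega_X^{\otimes(n-1)}\otimes\omega_X^{-(n-2)}\bigr)=H^n(\omega_X)\cong\C,$$
the last isomorphism being Serre duality for the connected projective variety $X$. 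Call the resulting scalar $\Psi_s(a_1,\dots,a_n)$; it is visibly $\C$-multilinear in the $a_i$ and linear in $s$. Note that the bookkeeping of twists forces exactly the exponent $-(n-2)$: this is why the construction is phrased on $H^1(\Omega_X^{n-1})$ rather than on $H^1(T_X)$, and it is the only place the Fano hypothesis enters, namely to guarantee that anticanonical sections exist.

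The crux — and the step I expect to require the most care — is checking that $\Psi_s$ is symmetric, so that it descends to an element of $S^n V^*$. Interchanging two arguments $a_i,a_j$ contributes a sign $(-1)^{1\cdot 1}=-1$ from the graded-commutativity of the cup product of two degree-one classes, while the alternating map $T_X^{\otimes n}\to\wedge^n T_X$ contributes a compensating sign $-1$ under the transposition of the corresponding tangent factors (the canonical factors multiply commutatively and contribute no sign). The two signs cancel, so $\Psi_s$ is invariant under every adjacent transposition, hence under all of $S_n$. Therefore $s\mapsto\Psi_s$ defines the desired linear map $H^0(-(n-2)K_X)\to S^n H^1(\Omega_X^{n-1})^*$, and the $n=3$ case recovers the cubic form studied in the later sections.
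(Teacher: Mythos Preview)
Your argument is correct and is essentially the paper's own construction: both use the identification $\Omega_X^{n-1}\cong T_X\otimes\omega_X$ (equivalently $\wedge^n\Omega_X^{n-1}\cong\omega_X^{\,n-1}$) together with the cup product $H^1(\Omega_X^{n-1})^{\otimes n}\to H^n(\omega_X^{\,n-1})$ and Serre duality against $H^0(-(n-2)K_X)$. You make explicit the sign cancellation giving symmetry, which the paper leaves implicit, while the paper goes on to record an alternative description via the iterated period map $S^{n-2}H^1(T_X)\to S^2(H^{n-1,1})^*$; that extra paragraph is context rather than something needed for the bare statement.
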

\begin{proof}
Recall that for a $n$-dimensional vector space $U$, we have that $$ \bigwedge^{n-1} U  \cong (\det U) \otimes U^*,$$
hence
\begin{align*}
\bigwedge^n(\bigwedge^{n-1}U) & \cong (\det U)^n\otimes \bigwedge^n U^\ast \\
& \cong (\det U)^{n-1}.
\end{align*}
Globally, we have $\wedge^n \Omega_X^{n-1}\overset{\sim}\to (\Omega_X^n)^{\otimes(n-1)}$, so 
$$S^n H^1(\Omega^{n-1})\to H^n(K_X^{n-1})\cong H^0(-K_X^{n-2})^*, $$
which is dual to 
$$H^0(-(n-2)K_X)\to S^n H^1(\Omega_X^{n-1})^*.$$

Recall that, by horizontality, we have that the period map sends 
$$H^1(T_X)\to \Hom(H^{n-1,1}, H^{n-2,2}), $$
with $H^{i,j}:=H^{i,j}(X)$.
 
We have then that 
$$S^{n-2}H^1(T_X)\to \Hom(H^{n-1,1},H^{1,n-1})\cong S^2( H^{n-1,1})^*, $$
by the iterated period map. So, since 
$$H^0(-K_X)\to H^1(T_X)\otimes (H^{n-1,1}(X))^*, $$
we have that
\[\begin{tikzcd}S^{n-2} H^0(-K_X)\arrow[r]& S^{n-2}H^1(T_X)\otimes S^{n-2}(H^{n-1,1})^\ast \arrow[d] &  \\
& S^2 (H^{n-1,1})^*\otimes S^{n-2}(H^{n-1,1})^* \arrow[r] &  S^n(H^{n-1,1})^* ,
\end{tikzcd}
 \]
 and this factors through $H^0(-(n-2)K_X)$.
\end{proof}

\subsection{Infinitesimal Torelli for anti-canonical pairs}
The content of this subsection is well-known but we use it to fix some notation. The case when $X$ is of dimension two and $Y$ is a SNC-divisor is treated in \cite[Thm 3.16]{F16}.

Let $(X,Y)$ be a smooth anti-canonical pair, that is, $X$ and $Y$ are smooth projective varieties and $Y\in \abs{-K_X}$. If $\dim X=n$, we choose 
\begin{equation}\label{eq:LogVolume}
\Omega\in H^0(\Omega^n(\log Y)),
\end{equation}
to have a fixed identification
\begin{equation}\label{eq:IdentificationCY}
K_X\otimes [Y]\cong \Oo_X.
\end{equation}

The classical proof of infinitesimal Torelli for smooth Calabi-Yau manifolds extends directly for the map 
\begin{equation}\label{eq:PeriodMap}
\Def(X,Y)\to D_{(X,Y)}
\end{equation}
where $\Def(X,Y)$ denotes the moduli space of deformations of the pair $(X,Y)$ and $D_{(X,Y)}$ denotes the period domain for the MHS on $H^n(X\setminus Y)$.

In the case at hand, points of $D_{(X,Y)}$ are $(V,W_\bullet,F^\bullet)$ where the weight filtration is 
$$W_n \subset W_{n+1}=V $$
and the Hodge filtration is 
$$F^n\subset F^{n-1}\subset \ldots \subset F^0=V_{\mathbb{C}}. $$
The first piece of (\ref{eq:PeriodMap}) is given by
\begin{equation}\label{eq:FirstPeriod}
(X,Y)\to F^n H^n(X\setminus Y)\subset F^{n-1}(X\setminus Y) 
\end{equation}
\begin{prop}\label{prop:CYInfTor} The differential of (\ref{eq:FirstPeriod}) is injective.
\end{prop}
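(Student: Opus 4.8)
The plan is to transport the classical infinitesimal Torelli argument for Calabi--Yau manifolds word for word, replacing the holomorphic de Rham complex by Deligne's logarithmic de Rham complex $\Omega^\bullet_X(\log Y)$, as the preamble to this subsection anticipates. First I would pin down the three objects entering the differential. The tangent space to $\Def(X,Y)$ is
\[
T = H^1\!\left(X, T_X(-\log Y)\right), \qquad T_X(-\log Y) := \left(\Omega^1_X(\log Y)\right)^{\vee},
\]
the logarithmic tangent sheaf governing deformations of the pair. On the Hodge side, the $E_1$-degeneration of the Hodge-to-de Rham spectral sequence for $\Omega^\bullet_X(\log Y)$ identifies $\Gr_F^p H^n(X\setminus Y)\cong H^{n-p}\!\left(X,\Omega^p_X(\log Y)\right)$. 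In particular $F^nH^n(X\setminus Y)=H^0\!\left(X,\Omega^n_X(\log Y)\right)$, which by the identification (\ref{eq:IdentificationCY}) is one-dimensional and spanned by $\Omega$, while $\Gr_F^{n-1}H^n(X\setminus Y)=H^1\!\left(X,\Omega^{n-1}_X(\log Y)\right)$.

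Next I would describe the differential itself. By Griffiths transversality for the underlying VMHS, the derivative of the line $F^n=\C\cdot\Omega$ lands in $F^{n-1}/F^n=\Gr_F^{n-1}$, and the Kodaira--Spencer/cup-product description of the period derivative identifies the differential of (\ref{eq:FirstPeriod}) with the contraction map
\[
H^1\!\left(X, T_X(-\log Y)\right) \longrightarrow H^1\!\left(X,\Omega^{n-1}_X(\log Y)\right), \qquad \theta \longmapsto \theta\iprod\Omega .
\]
The decisive point is then a purely sheaf-theoretic identity, exactly of the kind already used in the proof of the Lemma of Subsection \ref{ss:FanoNForm}. Since $\Omega^1_X(\log Y)$ is locally free of rank $n$, applying $\bigwedge^{n-1}U\cong(\det U)\otimes U^{\vee}$ to $U=\Omega^1_X(\log Y)$ yields
\[
\Omega^{n-1}_X(\log Y)\cong \Omega^n_X(\log Y)\otimes T_X(-\log Y)\cong T_X(-\log Y),
\]
the last isomorphism being contraction with the nowhere-vanishing section $\Omega$ of $\Omega^n_X(\log Y)\cong\Oo_X$ supplied by (\ref{eq:IdentificationCY}). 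Under this identification $\theta\mapsto\theta\iprod\Omega$ becomes the identity on sheaves, so it induces an isomorphism on $H^1$; hence the differential of (\ref{eq:FirstPeriod}) is in fact an isomorphism, and in particular injective, which is the assertion.

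The step I expect to demand the most care is not this final sheaf computation, which is formal once $\Omega$ is known to be nowhere vanishing, but the justification that the differential of (\ref{eq:FirstPeriod}) is precisely the contraction $\theta\mapsto\theta\iprod\Omega$. This is the logarithmic analogue of Griffiths' computation of the derivative of the period map, and it rests on two ingredients: the $E_1$-degeneration of the logarithmic Hodge-to-de Rham spectral sequence (so that the graded pieces of $F^\bullet$ are the sheaf cohomology groups above), and the compatibility of the Kodaira--Spencer map for deformations of the pair $(X,Y)$ with cup product in the hypercohomology of $\Omega^\bullet_X(\log Y)$. Once these identifications are secured, the fact that $\Omega$ trivializes $\Omega^n_X(\log Y)$ forces the differential to be an isomorphism.
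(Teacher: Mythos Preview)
Your proposal is correct and follows essentially the same route as the paper: identify the graded pieces via the logarithmic Hodge--de Rham spectral sequence, recognize the differential as contraction with $\Omega$, and use the trivialization $\Omega^n_X(\log Y)\cong\Oo_X$ from (\ref{eq:IdentificationCY}) to see that this contraction becomes the identity on $H^1$. The paper's proof is just a terser version of yours, packaging the contraction-is-identity step into a single commutative square rather than spelling out the $\bigwedge^{n-1}$ identity.
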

\begin{proof}
Using $H^n(X\setminus Y, \mathbb{C})\cong \mathbb{H}^n(\Omega_X^\bullet (\log Y))$ and the degeneration of the Hodge-de Rham spectral sequence gives
$$F^n H^n(X\setminus Y)\cong H^0(\Omega_X^n(\log Y))\cong \C\cdot \Omega $$
$$\Gr_F^{n-1} H^n(X\setminus Y)\cong H^1(\Omega_X^{n-1}(\log Y)) $$
The differential of (\ref{eq:FirstPeriod}) is induced by 
\[\begin{tikzcd} H^1(T_X(-\log Y))\ar[r] \ar[d, "\cong "] & \Hom(H^0(\Omega_X^n(\log Y)), H^1(\Omega_X^{n-1}(\log Y))) \ar[d, "\cong"] \\
H^1(\Omega_X^{n-1}(\log Y)) \ar[r, dashed] & H^1(\Omega_X^{n-1}(\log Y))
\end{tikzcd}
\] where the identifications are given by (\ref{eq:IdentificationCY}). The dashed arrow is then the identity.
\end{proof}

For a diagram in the case of dimension 3, see (\ref{eq:DifDiag}).

\section{Fano-K3 pairs}\label{sec:FK3}
\subsection{Fano Threefolds and anti-canonical sections}
\subsubsection{Fano threefolds}
In this section, we will consider the MHS's on $H^3(X-Y)$ and dually on $H^3(X,Y)$, where $X$ is smooth Fano $3$-fold and $Y\in \abs{-K_X}$ is a smooth anti-canonical divisor. We set 
\begin{align*}
2g-2=(-K_X)^3 \text{ with $g$ the genus of } X\\
R=\Ima\{H^2(X,\mathbb{Z})\to H^2(Y,\mathbb{Z})\}\subset \Pic (Y)
\end{align*}
and use the notation
\begin{align*}
&\Fg=  \left\lbrace \begin{array}{l}
 \text{moduli stack of pairs $(X,Y)$, with $X$ a Fano threefold of genus $g$ and}\\
\text{$Y$ a smooth surface in $\abs{-K_X}$ with a lattice isomorphism $R\cong \Pic(Y)$}\end{array}\right.\\
&\Kg= \left\lbrace\begin{array}{l}
\text{moduli space of smooth polarized $K3$ surfaces of degree $2g-2$ with }\\
\text{a marking }R\subset \Pic(Y)
\end{array} \right.\\
&\Mg=\left\lbrace \begin{array}{l}
\text{moduli space of triples $(X,Y,f)$ where } X\in \mathcal{F}_g^R, Y\in \mathcal{K}_g^R \text{ and $f$ corresponds}\\
\text{to the graph $G_f\subset Y\times X$ of an embedding $y\mapsto (y,f(y)), y\in Y$.}
\end{array}\right. \end{align*} 

\subsubsection{Properties of $(X,Y)$}
We will use the following notation for the basic numerical invariants of the pair $(X,Y)$:
\begin{equation}\label{eq:NumIn}
\left\lbrace
\begin{array}{ll}
(i) & h^{2,1}(X)=\dim H^1(\Omega_X^2)=\dim J(X),\\
&  \text{where $J(X)$ is the intermediate Jacobian of $X$;}\\
(ii) & r=\dim{\Ima(H^2(X)\to H^2(Y))};\\
& \text{this map is injective by the weak Lefschetz theorem;}\\
(iii) & \text{the \emph{genus} $g$ of $X$ is given as above by}\\
& 2g-2=(-K_X)^3=(-K_X|_Y)^2;
\end{array} \right.
\end{equation}
Since $Y\in\abs{-K_X}$ and $X$ is Fano,
$$N_{Y/X}=K_X^{-1}|_Y $$
is an ample line bundle over a $K3$ surface. Thus
\begin{equation}\label{eq:VanNor}
h^i(N_{Y/X})=0, \ i>0
\end{equation}
and by Riemann-Roch
\begin{equation}\label{eq:RR}
h^0(N_{Y/X})=\frac{1}{2}N_{Y/X}^2+\chi(\mathscr{O}_Y)=g+1.
\end{equation}
Since $-K_X\to X$ is ample
\begin{equation}\label{eq:AN}
h^i(T_X)=h^i(T_X\otimes K_X\otimes K_X^{-1})=h^i(\Omega_X^2\otimes K_X^{-1})=0, \ i\geq 2
\end{equation}
where the last step is Akizuki-Nakano vanishing.

\noindent Since $X$ is Fano
\begin{equation}\label{eq:Fano}
h^0(\Omega_X^p)=0, \text{ for } p>0.
\end{equation}
Finally, we will make the assumption
\begin{equation}\label{eq:Tan}
h^0(T_X)=0.
\end{equation}
\subsubsection{Basic exact sequences and interpretation}
We will frequently use the following basic exact sequences
\begin{equation}\label{eq:BES1}
0\to T_X\otimes [Y]^{-1}\to T_X\to T_{X}|_Y\to 0.
\end{equation}
Note that $T_X\otimes [Y]^{-1}\cong \Omega_X^2$, where $[Y]$ denotes the line bundle associated to $Y$.
\begin{equation}\label{eq:BES2}
0\to T_Y\to T_X|_Y\to N_{Y/X}\to 0
\end{equation}
\begin{equation}\label{eq:BES3}
0\to T_X(-\log Y)\to T_X\to N_{Y/X}\to 0
\end{equation}
\begin{equation}\label{eq:BES4}
0\to T_X\otimes [Y]^{-1}\to T_X(-\log Y)\to T_Y\to 0
\end{equation}
combining the cohomology of (\ref{eq:BES3}), (\ref{eq:BES4}) and using the assumption (\ref{eq:Tan}) together with (\ref{eq:VanNor}), (\ref{eq:AN}), (\ref{eq:Fano}) gives the following commutative diagram:
\medskip

\begin{equation}\label{eq:ComDiam}
\begin{tikzcd}
& & 0 \arrow[d] & & & \\
& & \underset{{\color{red}g+1}}{ H^0(N_{Y/X})} \arrow[d] & & &\\
0\arrow[r]& \underset{{\color{red}h^{2,1}(X)}}{H^1(\Omega_X^2)}\arrow[r] & \underset{{\color{red}20-r+h^{2,1}(X)}}{H^1(T_X(-\log Y))}  \arrow[r, "d\pi_Y"]\arrow[d," d\pi_X"] &\underset{{\color{red}20}} {H^1(T_Y)} \arrow[r] & \underset{{\color{red}r}} {H^2(\Omega_X^2)} \arrow[r] &0 \\
& & \underset{{\color{red}20-r+h^{2,1}(X)-g-1}}{H^1(T_X)\arrow[d]} & & & \\
& & 0 & & & \\
\end{tikzcd}
\end{equation}
The dimensions have been written in red below the relevant group. The $d\pi_Y$ and $d\pi_X$ are the differentials of the mappings $\pi_Y, \pi_X$ in 
\begin{equation}\label{eq:DefSq1}
\begin{tikzcd} & \Def(X,Y)\arrow[ld,"\pi_X"']\arrow[rd,"\pi_Y "] & \\
\Def(X) & & \Def(Y)
\end{tikzcd} 
\end{equation}

Following \cite{B04}, we may refine the diagram by recalling that $R=\Ima \Pic(X)\to \Pic(Y)$. Then (\ref{eq:DefSq1}) becomes

\begin{equation}\label{eq:DefSq2}
\begin{tikzcd} & \Fg\arrow[ld,"\pi_X"']\arrow[rd,"\pi_{Y,R} "] & \\
\Def(X) & & \Kg
\end{tikzcd} 
\end{equation}
where $\pi_{X,R}$ is now a submersion onto an open set in a period domain which is Hermitian symmetric of type $IV$.

\subsection{Cubic form}\label{ss:CubicForms}
\subsubsection{Cubic form for Fano threefolds}\label{sss:CubicFano}  Recall that from \ref{ss:FanoNForm}, we have a cubic form given by as follows.
Fix a smooth projective Fano threefold $X$. We have a map:
$$H^0(-K_X)\otimes H^1(\Omega_X^2)\to H^1(T_X),$$
equivalently:
$$H^0(-K_X)\to H^1(T_X)\otimes H^1(\Omega_X^2)^*.$$
Now, the period map takes 
$$H^1(T_X)\to S^2 H^1(\Omega_X^2)^*,$$
taken together, we have a map 
\[\begin{tikzcd}
H^0(-K_X) \arrow{r} \arrow{rd} & S^2 H^1(\Omega_X^2)^*\otimes \arrow[d] H^1(\Omega_X^2)^*\\
& S^3 H^1(\Omega_X^2)^*
\end{tikzcd}\]
Thus, we have a map from $H^0(-K_X)$ to cubic forms on $H^1(\Omega_X^2)^*$. 

\subsubsection{Cubic complex}
\begin{prop}\label{prop:CubicComplex} There is a complex
$$H^0(-K_X)\to (H^{2,1}(X))^*\otimes H^1(T_X)\to \wedge^2 (H^{2,1}(X))^*\otimes H^{1,2}(X) $$
\end{prop}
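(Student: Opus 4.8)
The plan is to identify both arrows explicitly and then reduce the assertion that they compose to zero to a single symmetry property of a trilinear form, which in turn follows from the anticommutativity of interior products.

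First I would pin down the two maps. The first arrow is the adjoint of the multiplication map $H^0(-K_X)\otimes H^1(\Omega_X^2)\to H^1(T_X)$ coming from the canonical isomorphism $T_X\cong\Omega_X^2\otimes K_X^{-1}$ used in \S\ref{sss:CubicFano}; explicitly, a section $s\in H^0(-K_X)$ is sent to the homomorphism $m_s\colon\eta\mapsto s\cdot\eta$, viewed as an element of $(H^{2,1}(X))^*\otimes H^1(T_X)$, where $H^{2,1}(X)=H^1(\Omega_X^2)$. The second arrow is the composite of $\mathrm{id}_{(H^{2,1})^*}\otimes\Phi$, where $\Phi\colon H^1(T_X)\to\Hom(H^{2,1}(X),H^{1,2}(X))=(H^{2,1}(X))^*\otimes H^{1,2}(X)$ is the cup product recalled in \S\ref{sss:CubicFano} (the derivative of the period map for $H^3(X)$), with the antisymmetrization $(H^{2,1})^*\otimes(H^{2,1})^*\to\wedge^2(H^{2,1})^*$ on the two leftmost factors.

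Next I would compute the composite. Feeding $m_s$ into the second arrow and evaluating on a pair $(\eta_1,\eta_2)\in H^{2,1}(X)\times H^{2,1}(X)$ produces, before antisymmetrization, the element $\Phi(s\cdot\eta_1)(\eta_2)=(s\cdot\eta_1)\smile\eta_2\in H^{1,2}(X)$, where $\smile$ denotes cup product followed by the contraction $T_X\otimes\Omega_X^2\to\Omega_X^1$, $v\otimes\beta\mapsto v\iprod\beta$. Thus the composite is the antisymmetrization in $(\eta_1,\eta_2)$ of the trilinear form $(s;\eta_1,\eta_2)\mapsto(s\cdot\eta_1)\smile\eta_2$, and it vanishes precisely when this form is symmetric in $\eta_1$ and $\eta_2$.

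The main point, and the step I expect to be the real work, is therefore proving the symmetry $(s\cdot\eta_1)\smile\eta_2=(s\cdot\eta_2)\smile\eta_1$ in $H^{1,2}(X)=H^2(\Omega_X^1)$. I would establish it with Dolbeault representatives: choose a local holomorphic frame $\omega$ of $\Omega_X^3=K_X$, write $s=f\,\omega^{-1}$ and $\eta_i=\sum_k d\bar z_k\otimes\beta_i^k$ with $\beta_i^k$ local holomorphic $2$-forms. Under $T_X\cong\Omega_X^2\otimes K_X^{-1}$ the class $s\cdot\eta_i$ is the $T_X$-valued form $\sum_k d\bar z_k\otimes v_i^k$ determined by $v_i^k\iprod\omega=f\,\beta_i^k$. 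A short computation using that interior products anticommute, $v_1^k\iprod(v_2^l\iprod\omega)=-\,v_2^l\iprod(v_1^k\iprod\omega)$, gives $v_1^k\iprod\beta_2^l=-\,v_2^l\iprod\beta_1^k$; substituting this and relabeling the wedge indices $k\leftrightarrow l$ then yields the equality $(s\cdot\eta_1)\smile\eta_2=(s\cdot\eta_2)\smile\eta_1$ already at the level of $(0,2)$-forms. Consequently the antisymmetrization is zero and the two arrows form a complex. The only care needed is bookkeeping of signs and checking that the identification $T_X\cong\Omega_X^2\otimes K_X^{-1}$ is applied consistently across charts; no global hypotheses beyond those already in force for a Fano threefold are used.
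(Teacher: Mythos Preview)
Your argument is correct and follows essentially the same route as the paper's proof. Both identify the composite as the antisymmetrization of $(\eta_1,\eta_2)\mapsto (s\cdot\eta_1)\iprod\eta_2$ and then show this bilinear map is symmetric by combining the anticommutativity of interior products at the sheaf level with the sign coming from the $H^1\cup H^1$ cup product; you have simply written out in Dolbeault representatives what the paper compresses into the one line ``on the form level, the contraction commutes, but on the cohomology level, it anti-commutes.'' Your explicit bookkeeping of the $d\bar z_k\wedge d\bar z_l$ swap makes the sign cancellation transparent, which the paper's sentence leaves to the reader.
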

\begin{proof}
Let $\tau\in H^0(-K_X)$ and $\omega_i$ be a basis for $H^{2,1}(X)$, 
$$\tau \mapsto \sum \omega_i^*\otimes (\tau \rfloor \omega_i)\mapsto \sum (\omega_i^*\wedge \omega_j^*)(\tau \rfloor \omega_i)\rfloor \omega_j. $$
Now on the form level, the contraction commutes, but on the cohomology level, it anti-commutes, so this is a complex.
\end{proof}

\subsubsection{Normal bundle to the period map fixing Y} In this subsection, we will identify the normal bundle to the period map while keeping $Y$ fixed. 

Let $Y\in \abs{-K_X}$ given by a section $s_Y\in H^0(X,-K_X)$ and denote its associated cubic form by $C$.

Using the short exact sequence (\ref{eq:BES4}), let us denote
$$H^1(T_Y)_R:=\ker H^1(T_Y)\to H^2(\Omega_X^2). $$

\begin{prop}\label{prop:NormalBundle} We have a natural map $$H^1(T_Y)_{\NEW}\to \frac{S^2 H^{2,1}(X)^*}{J_2(C)}.$$
\end{prop}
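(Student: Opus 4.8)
The plan is to construct the map directly on cohomology classes and then show it is well-defined by checking that the appropriate ambiguities land in the Jacobian ideal $J_2(C)$. First I would fix $\tau=s_Y\in H^0(-K_X)$, the section cutting out $Y$, so that by \eqref{eq:IdentificationCY} the cubic form $C$ is determined. Recall from Subsection \ref{ss:FanoNForm} and \ref{sss:CubicFano} that the period map gives $H^1(T_X)\to S^2 H^1(\Omega_X^2)^*=S^2 H^{2,1}(X)^*$, so I have a target on which I want to land. The natural source for a map out of $H^1(T_Y)_R$ comes from the short exact sequence \eqref{eq:BES4}, whose cohomology sits inside the commutative diagram \eqref{eq:ComDiam}: the subspace $H^1(T_Y)_R=\ker\bigl(H^1(T_Y)\to H^2(\Omega_X^2)\bigr)$ is precisely the image of $d\pi_Y\colon H^1(T_X(-\log Y))\to H^1(T_Y)$, by exactness of the middle row of \eqref{eq:ComDiam}.

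Second, I would lift a class in $H^1(T_Y)_R$ to $H^1(T_X(-\log Y))$ along $d\pi_Y$, then push forward via $d\pi_X$ into $H^1(T_X)$, and finally apply the quadratic period map $H^1(T_X)\to S^2 H^{2,1}(X)^*$. Concatenating these gives a candidate map $H^1(T_Y)_R\to S^2 H^{2,1}(X)^*$. The issue is that the lift along $d\pi_Y$ is not unique: by \eqref{eq:ComDiam}, two lifts differ by an element of $\ker(d\pi_Y)=\Ima\bigl(H^1(\Omega_X^2)\to H^1(T_X(-\log Y))\bigr)$, which maps under $d\pi_X$ into the image of $H^1(\Omega_X^2)=H^1(T_X\otimes[Y]^{-1})$ in $H^1(T_X)$. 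So I must compute the image of this ambiguity subspace under the quadratic period map and identify it with $J_2(C)$, the degree-two part of the Jacobian ideal of the cubic $C$.

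Third, and this is the crux, I would show that the indeterminacy coincides exactly with $J_2(C)\subset S^2 H^{2,1}(X)^*$. The cubic $C$ is defined by $v\mapsto \langle \tau,\, P(v)(v)\otimes v\rangle$ where $P\colon H^1(T_X)\to S^2 H^{2,1}(X)^*$ is the period map, so its partial derivatives (polar quadrics) are governed by the bilinear pairing $H^0(-K_X)\otimes H^1(\Omega_X^2)\to H^1(T_X)$ from \ref{sss:CubicFano} followed by $P$. I would match the classes $s_Y\lrcorner\,\omega_i\in H^1(T_X)$, for $\omega_i$ a basis of $H^1(\Omega_X^2)$, against the ambiguity subspace: the point is that contracting $\tau=s_Y$ against $H^1(\Omega_X^2)$ is precisely the connecting/multiplication map whose image, after $P$, spans the partials $\partial C/\partial x_i$, i.e.\ $J_2(C)$. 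The main obstacle is this last identification: I expect it to require a careful comparison of the symmetric-algebra structures, using that the second-order period map is symmetric (as in the cubic complex of Proposition \ref{prop:CubicComplex}) together with the isomorphism $H^1(\Omega_X^2)^*\cong H^{1,2}(X)$ and the polarization on $H^3(X)$. Once the ambiguity is shown to be contained in $J_2(C)$, the induced map to the quotient $S^2 H^{2,1}(X)^*/J_2(C)$ is well-defined and natural, completing the proof.
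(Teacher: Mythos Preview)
Your proposal is correct and follows essentially the same route as the paper: lift from $H^1(T_Y)_R$ to $H^1(T_X(-\log Y))$ via the exact sequence \eqref{eq:BES4}, push to $H^1(T_X)$ via $d\pi_X$, apply the period map, and then kill the ambiguity coming from $H^1(\Omega_X^2)$. The one place where you are too cautious is the step you flag as ``the main obstacle'': the identification of the image of $H^1(\Omega_X^2)\xrightarrow{s_Y} H^1(T_X)\xrightarrow{d\Phi_X} S^2 H^{2,1}(X)^*$ with $J_2(C)$ is not a delicate comparison of symmetric-algebra structures but is essentially the \emph{definition} of $C$ given in \S\ref{sss:CubicFano}, since $C$ is built precisely as the composite $H^0(-K_X)\to H^1(\Omega_X^2)^*\otimes H^1(T_X)\to S^3 H^{2,1}(X)^*$, so evaluating on $s_Y$ and polarizing once gives exactly the first partials of $C$.
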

Here $J_2(C)$ denotes the subspace of $S^2 H^{2,1}(X)^*$ generated by the first partials of $C$.
\begin{proof}
Suppose that $Y$ is associated to a section $s_Y\in H^0(X,-K_X)$ and denote by $C$ its cubic form. We have that 
\[\begin{tikzcd}
H^{2,1}(X)\ar[r,"s_Y"]\ar[rd, "C"] & H^1(T_X) \ar[d, "d\Phi_X"]\\
 & S^2 H^{2,1}(X)^*
\end{tikzcd}
\]
where $d\Phi_X$ denotes the derivative of the period map of $X$.
The diagonal map is given by first partials of $C$.

Using again (\ref{eq:BES4})
\[\begin{tikzcd}
0\ar[r]&  H^1(\Omega_X^1) \ar[r]\ar[drr,bend right, "C" below] & H^1(T_X(-\log Y)) \ar[r] \ar[d]& H^1(T_Y)_R \ar[r]  & 0 \\ 
& & H^1(T_X) \ar[r] & S^2 H^1(\Omega_X^2)^* & 
\end{tikzcd} 
\]
so $H^1(T_Y)_R\to \frac{S^2 H^1(\Omega_X^2)^*}{J_2(C)}$.
\end{proof}

To interpret this, consider the following diagram
\begin{equation}\label{tkzdg:normalbundleperiod}
\begin{tikzcd} 
0 \ar[d] & \\
T_{S_Y}\ar[d]\ar[r] & S^2 H^{1,2}(X)\\
T_{\Fg}\ar[ur]\ar[d]& \\
H^1(T_Y)_{\NEW}\ar[d]\ar[r] & \frac{S^2 H^{1,2}(X)}{\Ima T_{S_Y}}\cong \frac{S^2 H^{1,2}(X)}{J_2(C)}\\
0 &
\end{tikzcd}
\end{equation}
and $\frac{S^2 H^{1,2}(X)}{\Ima T_{S_Y}}\cong \frac{S^2 H^{1,2}(X)}{J_2(C)}$ represents the normal bundle to the period map while keeping $Y$ fixed.

\subsubsection{Properties of cubics} Now, let us recall some general basic properties of cubics. 

\begin{lem}\label{lem:PropCubics} The following conditions are equivalent: 
\begin{itemize}
\item[-] The cubic $C$ is non-degenerate in $S^3 V^\ast$,
\item[-] there is no proper subspace $V_0\subset V$ such that $C\in S^3 V_0^\ast\subset S^3V^*$,
\item[-] the partials $\frac{\partial C}{\partial z_1^\ast}, \ldots, \frac{\partial C}{\partial z_5^\ast}$ are linearly independent in $S^2 V^\ast$.
\end{itemize}
\end{lem}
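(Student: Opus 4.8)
The plan is to prove the equivalence of the three conditions by establishing a cycle of implications, treating $V$ as a finite-dimensional complex vector space and $C \in S^3 V^*$ as a cubic form. The key linear-algebra fact underlying everything is that the first partials $\partial C/\partial z_i^*$ span a subspace of $S^2 V^*$ whose dimension detects exactly how many variables $C$ genuinely depends on.

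First I would prove that the negation of the second condition is equivalent to the negation of the third, i.e. that $C$ lies in $S^3 V_0^*$ for some proper subspace $V_0 \subsetneq V$ if and only if the partials are linearly dependent. For the forward direction, suppose $C \in S^3 V_0^*$ with $V_0 = \{z : \ell(z) = 0\}^{\perp}$-complement of dimension $< \dim V$; after choosing coordinates $z_1^*,\dots,z_k^*$ adapted to $V_0$ (so $C$ uses only these), every partial $\partial C/\partial z_j^*$ with $j > k$ vanishes identically, giving a linear dependence among the $\partial C/\partial z_i^*$. Conversely, if there is a nontrivial relation $\sum_i \lambda_i \, \partial C/\partial z_i^* = 0$, then by Euler's identity $3C = \sum_i z_i^* \,\partial C/\partial z_i^*$, and a linear change of coordinates eliminating the dependent direction exhibits $C$ as a cubic in fewer than $\dim V$ variables, placing it in a proper $S^3 V_0^*$. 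The cleanest formulation uses the fact that the annihilator of the span of the partials is precisely the tangent directions along which $C$ is constant, i.e. the radical of $C$.

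Next I would connect these to the first condition, \emph{non-degeneracy of $C$}, which I read as: $C$ has trivial radical, meaning there is no nonzero $v \in V$ such that the directional derivative $D_v C = 0$ identically. This radical is exactly the annihilator in $V$ of the span $\langle \partial C/\partial z_1^*,\dots,\partial C/\partial z_n^*\rangle \subseteq S^2 V^*$. Hence $C$ is non-degenerate precisely when this annihilator is zero, which happens precisely when the partials span a space of dimension $n = \dim V$, i.e. when they are linearly independent. This gives the equivalence of the first and third conditions directly, and the equivalence with the second follows from the paragraph above, since the radical being nonzero is exactly the condition that $C$ factors through a proper quotient, equivalently lives in $S^3 V_0^*$ for a proper $V_0$.

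The main obstacle I anticipate is fixing the precise meaning of ``non-degenerate'' so that the three conditions are literally equivalent rather than merely morally so; depending on the intended definition one must be careful whether non-degeneracy refers to the polar bilinear-form picture or to the radical/base-locus-smoothness picture, and these agree for cubics only after the correct setup. Once the definition is pinned down, the argument is entirely the standard linear algebra of the polarization map $V \to S^2 V^*$ sending $v \mapsto D_v C$, together with Euler's formula to pass between $C$ itself and its partials; I expect no genuinely hard step beyond choosing coordinates adapted to the radical and verifying that the induced change of basis carries $C$ into the smaller symmetric power. I would organize the write-up as the two equivalences $(1)\Leftrightarrow(3)$ and $(2)\Leftrightarrow(3)$, both routed through the single observation that $\dim \langle \partial C/\partial z_i^*\rangle = \dim V - \dim(\operatorname{rad} C)$.
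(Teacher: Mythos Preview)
The paper does not prove this lemma; it is introduced with ``let us recall some general basic properties of cubics'' and stated without argument, with the definition of non-degenerate (``the corresponding cubic variety spans the ambient projective space'') supplied immediately afterward. So there is no paper proof to compare against.

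Your argument is correct and is the standard one: the equivalence $(2)\Leftrightarrow(3)$ is exactly the observation that the linear map $V\to S^2V^\ast$, $v\mapsto D_vC$, has kernel equal to the annihilator of the span of the partials, and Euler's relation lets you rewrite $C$ in coordinates adapted to that kernel. Your worry about pinning down the meaning of ``non-degenerate'' is well placed. Read literally, ``the hypersurface $\{C=0\}$ spans $\mathbb{P}(V)$'' is \emph{not} equivalent to $(2)$ and $(3)$: for instance $C=z_1^3+z_2^3$ in five variables has dependent partials but its zero locus (a cone) still spans $\mathbb{P}^4$. The interpretation that makes the lemma true, and the one the paper clearly intends given how the lemma is used later (e.g.\ in the corollary characterising non-degeneracy of $C$ via surjectivity of $m_Q$), is precisely your ``trivial radical''/``not a cone'' reading, i.e.\ condition $(2)$. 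With that reading your proof is complete; I would simply state at the outset that you take non-degenerate to mean $(2)$, and then the cycle $(1)\Leftrightarrow(2)\Leftrightarrow(3)$ is the two-line linear-algebra argument you describe.
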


Recall that a cubic form $C$ is called non-degenerate if the corresponding cubic variety spans the ambient projective space.

There is also a similar statement for smoothness. 

\begin{lem}\label{lem:CubicAreSmooth} The following conditions are equivalent: 
\begin{itemize}
\item[-]The cubic $C$ is smooth, 
\item[-] $\Var(\frac{\partial C}{\partial z_1^\ast}, \ldots, \frac{\partial C}{\partial z_5^\ast})=\varnothing$ in $\mathbb{P}^4$, 
\item[-] $S^6 V^\ast / J_C=0$.
\end{itemize}
\end{lem}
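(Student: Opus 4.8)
The plan is to work in the graded polynomial ring $S = \bigoplus_{k\geq 0} S^k V^*$ on the five-dimensional space $V$ (with homogeneous coordinates $z_1^*,\dots,z_5^*$ on $\mathbb{P}^4$), writing $\partial_i := \partial C/\partial z_i^* \in S^2 V^*$ for the five quadratic partials and $J_C = (\partial_1,\dots,\partial_5)$ for the Jacobian ideal they generate. The cubic hypersurface is $X_C = \{C=0\}\subset \mathbb{P}^4$, and smoothness means by definition that the $\partial_i$ have no common zero on $X_C$.

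First I would establish the equivalence of the first two conditions. By the Euler relation $3C = \sum_{i=1}^5 z_i^*\,\partial_i$, any point at which all five partials vanish is automatically a zero of $C$; hence $\Var(\partial_1,\dots,\partial_5)$ in $\mathbb{P}^4$ is exactly the singular locus of $X_C$. Thus $X_C$ is smooth if and only if $\Var(\partial_1,\dots,\partial_5)=\varnothing$, which is the equivalence of the first two bullets.

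For the second and third conditions I would argue in both directions. Assuming $\Var(\partial_1,\dots,\partial_5)=\varnothing$, the five quadrics have empty common zero locus in $\mathbb{P}^4$, so they form a regular sequence (their codimension equals their number in the Cohen--Macaulay ring $S$); the Jacobian ring $S/J_C$ is therefore an Artinian complete intersection with Hilbert series $\prod_{i=1}^5 \frac{1-t^2}{1-t} = (1+t)^5$. This polynomial has degree $5$ --- equivalently, the socle of the Gorenstein ring $S/J_C$ lies in degree $(n+1)(d-2)=5$ for $n=4$, $d=3$ --- so in particular $(S/J_C)_6 = S^6 V^*/J_C = 0$. Conversely, if $S^6 V^*/J_C = 0$, then $(J_C)_6 = S^6 V^*$, and multiplying by $V^*$ gives $(J_C)_k = S^k V^*$ for every $k\geq 6$; hence $S/J_C$ is finite-dimensional, so $J_C$ contains a power of the irrelevant ideal and $\Var(\partial_1,\dots,\partial_5)=\varnothing$ by the projective Nullstellensatz.

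The only input that is not purely formal is the precise socle degree $(n+1)(d-2)=5$ of the Jacobian ring of a smooth hypersurface, i.e.\ the fact that the Hilbert series of this complete intersection truncates exactly below degree $6$; this is Macaulay's theorem, already implicit in the Griffiths description of the Hodge filtration via Jacobian rings used in \cite{CG80,G69}. Everything else reduces to the Euler identity and the projective Nullstellensatz, so I expect no genuine obstacle beyond keeping the conventions straight (which projective space carries the coordinates $z_i^*$, and the direction of the Euler relation).
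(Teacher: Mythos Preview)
Your proof is correct and is the standard argument for this well-known fact. The paper does not supply a proof of this lemma at all; it is stated in the subsection on ``Properties of cubics'' as a companion to Lemma~\ref{lem:PropCubics} and later referred to as ``well-known,'' so there is nothing to compare against. Your use of the Euler identity for the first equivalence, the Hilbert series $(1+t)^5$ of the Artinian complete intersection for the implication from empty base locus to $S^6V^*/J_C=0$, and the projective Nullstellensatz for the converse, is exactly the argument one would expect.
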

Note that, for an abstract cubic, we have the following implications 
$$ C\not = 0 \Leftarrow C \text{ non- degenerate } \Leftarrow C \text{ smooth. } $$
For example, $z_1(z_1^2+\ldots+z_5^2)$ is non-degenerate, but not smooth.

\subsection{Delta Invariant for Fano-K3 pairs}\label{ss:DeltaInvariant} Let us compute now the invariant $\delta e$ of section \ref{ss:DeltaInvariantGeneral} for a Fano-K3 pair. We will see in Corollary \ref{cor:NormalBundle} that its non-trivial part can be related to the normal bundle to the period map while keeping $Y$ fixed.

Let $Y, X$ be as above. This is, smooth manifolds, with $X$ a Fano threefold and $Y\in \abs{-K_X}$ an anti-canonical section. Consider the exact sequence: 
\begin{equation}\label{eq:DeltaInvFano}
0\to \frac{H^2(Y)}{H^2(X)}\to H^3(X,Y)\to\ker(H^3(X)\to H^3(Y))\to 0.
\end{equation}

We have $H^3(Y)=0, H^2(X)=\Hdg^1(C), H^{3,0}(X)=0$. Following \cite{B04}, let us denote by $H^2(Y)_R=H^2(Y)/\Ima(H^2(X))$. The extension class of the MHS lies in 
$$e_{Y/X}\in \frac{\Hom_\mathbb{C}(H^3(X), H^2(Y)_R)}{F^0\Hom_\mathbb{C}(H^3(X),H^2(Y)_R)+\Hom_\mathbb{Z}(H^3(X),H^2(Y)_R)}. $$
The infinitesimal invariant $\delta e_{Y/X}:=\delta e$ from section \ref{ss:DeltaInvariantGeneral} lies in the middle cohomology of 
\[
\begin{tikzcd}
 {F^0\Hom_\mathbb{C}(H^3(X),H^2(Y)_R)} \arrow[r]
& \arrow[d, phantom, ""{coordinate, name=Z}] T_S^*\otimes F^{-1}\Hom_\mathbb{C}(H^3(X),H^2(Y)_R) \arrow[d,
"\nabla" above,
rounded corners,
to path={ -- ([xshift=2ex]\tikztostart.east)
|- (Z) [near end]\tikztonodes
-| ([xshift=-2ex]\tikztotarget.west)
-- (\tikztotarget)}] \\
 & \wedge^2 T_S^*\otimes F^{-2}\Hom_\mathbb{C}(H^3(X),H^2(Y)_R), 
\end{tikzcd}
\]
and hence
\[\begin{tikzcd}
\begin{array}{c}
H^{2,1}(X)^*\otimes H^{2,0}(Y) \ \ \ \\
\oplus \\
H^{1,2}(X)^*\otimes H^{1,1}(Y)_R
\end{array} \arrow[r, "\nabla"] &  \arrow[d, phantom, ""{coordinate, name=Z}]  \begin{array}{c}
T_S^*\otimes H^{2,1}(X)^*\otimes H^{1,1}(Y)_R\\
\oplus \\
T_S^*\otimes H^{1,2}(X)^*\otimes H^{0,2}(Y)
\end{array} \arrow[d,
"\nabla" above,
rounded corners,
to path={ -- ([xshift=2ex]\tikztostart.east)
|- (Z) [near end]\tikztonodes
-| ([xshift=-2ex]\tikztotarget.west)
-- (\tikztotarget)}] \\
 & \wedge^2 T_S^*\otimes H^{2,1}(X)^*\otimes H^{0,2}(Y).
\end{tikzcd} \]

Holding $Y$ constant, $\delta e_{Y/X}$ decomposes as 
$$\begin{array}{lc}
\delta {e}_a \in & H^{0,2}(Y)\otimes \ker(T_S^*\otimes H^{2,1}(X)\overset{\nabla_X}\to \wedge^2 T_S^*\otimes H^{1,2}(X))\\
\oplus & \oplus \\
\delta {e}_b \in & H^{1,1}(Y)_R\otimes \coker (H^{2,1}(X)\overset{\nabla_X}\to T_S^*\otimes H^{1,2}(X)).
\end{array} $$

Using (\ref{eq:BES4}), we obtain the following exact sequence
\begin{equation}\label{eq:TangentFiber}
\begin{tikzcd}[contains/.style = {draw=none,"\cong" {description},sloped}]
0=H^0(T_Y)\ar[r]& H^1(T_X(-[Y]))\arrow[r] \ar[d,contains] & H^1(T_X(-\log Y)) \ar[d,contains]\ar[r] & H^1(T_Y	), \ar[d,contains]\\
&H^1(\Omega_X^2) &  T_{\Fg} &   T_{\Kg}
\end{tikzcd}
\end{equation}
It follows then that $T_{S_Y}\cong H^1(\Omega_X^2)$, with $S_Y$ the fiber of $Y$ in $\Fg\to \Kg$.
 Now $S=S_Y$, then above formulas for $\delta e_a$ and $\delta e_b$ are
$$\begin{array}{lc}
\delta e_a \in & H^{0,2}(Y)\otimes \ker(H^{1,2}(X)\otimes H^{2,1}(X)\overset{\nabla_X}\to \wedge^2 H^{1,2}(X)\otimes H^{1,2}(X))\\
\oplus & \oplus \\
\delta e_b \in & H^{1,1}(Y)_R\otimes \coker (H^{2,1}(X)\overset{\nabla_X}\to H^{1,2}(X)\otimes H^{1,2}(X)).
\end{array} $$

\begin{prop}\label{prop:DeltaA} The map $\delta e_a$ is $$\Id \in {H^{2,1}}^*(X)\otimes H^{1,2}(X)\otimes H^{0,2}(Y).$$
\end{prop}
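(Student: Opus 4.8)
The plan is to extract the $(-1,0)$-block $\delta e_a$ of $\alpha=\nabla\mu_F=\nabla e$ (notation of Subsection~\ref{ss:DeltaInvariantGeneral}, with $\nabla$ the Gauss--Manin connection over $S=S_Y$) and to show it is tautological. First I would fix the identifications that turn the target into a space carrying a canonical identity element: since $Y$ is a $K3$ surface, $H^{0,2}(Y)$ is one-dimensional, and the polarization on $H^3(X)$ gives $H^{1,2}(X)^\ast\cong H^{2,1}(X)$; hence
$$\Hom_\C\big(H^{1,2}(X),H^{0,2}(Y)\big)\cong H^{2,1}(X)\cong T_{S},$$
the last isomorphism being (\ref{eq:TangentFiber}) followed by the contraction $v\mapsto v\iprod\Omega$ coming from the Calabi--Yau identification (\ref{eq:IdentificationCY}). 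Under these, the ambient space $H^{2,1}(X)^\ast\otimes H^{1,2}(X)\otimes H^{0,2}(Y)$ becomes $\End\big(H^{2,1}(X)\big)$, so the assertion ``$\delta e_a=\Id$'' is the statement that the induced endomorphism of $T_{S}$ is the identity.

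Next I would compute the block by hand. Using $H^3(X\setminus Y)\cong\mathbb H^3(\Omega_X^\bullet(\log Y))$ together with the degeneration of the Hodge--de~Rham spectral sequence (as in Proposition~\ref{prop:CYInfTor}), and passing to $H^3(X,Y)$ by duality, I choose the holomorphic splitting $\mu_F$ adapted to $F^\bullet$ and differentiate it along $v\in T_{S}$. By Griffiths' formula for the derivative of the period map, $\nabla_v\mu_F$ is cup product (contraction) with the Kodaira--Spencer class $\rho(v)\in H^1(T_X(-\log Y))$, followed by the residue/restriction to $Y$ built into the relative realization. The sequence (\ref{eq:TangentFiber}) shows that for $v\in T_{S}$ the class $\rho(v)$ already lies in the subspace $H^1(T_X(-[Y]))=H^1(\Omega_X^2)$, and the contraction with $\Omega$ carries it back to $v$; this is exactly the point where the tautology enters, as in the identity ``dashed arrow'' of Proposition~\ref{prop:CYInfTor}.

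The crux is the evaluation of the $H^{0,2}(Y)$-component, which is the only nonzero graded piece $\Gr_F^0$ of $H^3(X,Y)$. Starting from $\omega\in H^{1,2}(X)$, lifting by $\mu_F$, contracting with $\rho(v)$ and applying the residue to $Y$ produces a class in $H^{0,2}(Y)=H^2(\Oo_Y)$, and I would show it equals the polarization pairing $Q_X(\rho(v),\omega)$ times the fixed generator of $H^{0,2}(Y)$ determined by $\Omega$. Under the identifications above this pairing is precisely the duality between $H^{2,1}(X)$ and $H^{1,2}(X)$, so the composite $T_{S}\to\Hom_\C(H^{1,2}(X),H^{0,2}(Y))\cong T_{S}$ sends $v$ to $v$, i.e. $\delta e_a=\Id$. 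It remains to check that the resulting cocycle lies in $\ker\big(T_S^\ast\otimes H^{2,1}(X)\overset{\nabla_X}\to\wedge^2T_S^\ast\otimes H^{1,2}(X)\big)$, so that $\delta e_a$ is well defined; for the identity element this is automatic because the underlying IVHS cup product $T_{S}\otimes H^{2,1}(X)\to H^{1,2}(X)$ is symmetric, hence its antisymmetrization vanishes.

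The main obstacle I anticipate is normalization: matching the two independent identifications — the Calabi--Yau one giving $T_{S}\cong H^{2,1}(X)$ and the polarization one giving $\Hom_\C(H^{1,2}(X),H^{0,2}(Y))\cong H^{2,1}(X)$ — so that the constant produced by contracting and residuing against the chosen $\Omega$ of (\ref{eq:LogVolume}) is exactly $1$, making the answer the identity rather than merely a nonzero isomorphism. This cancellation is the analogue of the commutativity that forces the dashed arrow in Proposition~\ref{prop:CYInfTor} to be the identity, and I expect it to be the only genuinely delicate bookkeeping in the argument.
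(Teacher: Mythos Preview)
Your approach is essentially the paper's: identify $T_{S_Y}$ with $H^1(T_X(-[Y]))\cong H^1(\Omega_X^2)=H^{2,1}(X)$ via (\ref{eq:TangentFiber}), recognize the relevant block of $\nabla\mu_F$ as the Kodaira--Spencer cup product, and observe that the resulting pairing $H^{2,1}(X)\otimes H^{1,2}(X)\to H^{0,2}(Y)\cong\C$ is Serre duality, hence yields the identity.

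The one place the paper is cleaner is exactly your anticipated obstacle. Rather than tracking a residue and matching two separate normalizations of $\Omega$, the paper identifies $H^{0,2}(Y)$ directly with $H^3(\Oo_X(-Y))=H^3(K_X)$ via the long exact sequence of $0\to\Oo_X(-Y)\to\Oo_X\to\Oo_Y\to 0$ (using $H^{0,2}(X)=0$). Then the map in question is literally the sheaf cup product
\[
H^1(T_X(-Y))\otimes H^2(\Omega_X^1)\longrightarrow H^3(\Oo_X(-Y))=H^3(K_X)\cong\C,
\]
i.e.\ Serre duality on the nose, so no bookkeeping of constants is needed. If you route your computation through that identification, your normalization worry disappears.
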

\begin{proof}
Recall that from (\ref{eq:TangentFiber}) that $T_{S_Y}=H^1(T_X(-[Y]))$. Consider now the exact sequence, 
$$0\to \Oo_X(-Y)\to \Oo_X \to \Oo_Y\to 0.$$
From this, as $H^{0,2}(X)=0$, then
$$H^{0,2}(Y)\cong \ker (H^3(\Oo_X(-Y))\to H^3(\Oo_X)=0).$$
The map $$T_{S_Y}\otimes H^{1,2}(X)\to H^{0,2}(Y),$$ i.e.
$$H^1(T_X(-Y))\otimes H^2(\Omega_X^1)\to H^3(\Oo_X(-Y)),$$ is cup product, using $T_X(-Y)\otimes \Omega_X^1\to \Oo_X(-Y)$.
Now, in our case $Y\in \abs{-K_X}$, this becomes
$$H^1(\Omega_X^2)\otimes H^2(\Omega_X^1)\to H^3(K_X)\cong \mathbb{C},$$
by duality.
So $T_{S_Y}\to H^{2,1}(X)\otimes H^{0,2}(Y)$ is $H^{2,1}(X)\overset{\Id}\to H^{2,1}(X)$, identifying $H^{0,2}(Y)\cong \mathbb{C}$. 
\end{proof}

We have therefore the following diagram: 
\[
\begin{tikzcd}
{\Id}\in H^{2,1}(X)\otimes H^{1,2}(X) \ar[rr]\ar[dr,"\text{partials of }C\otimes \Id "] \ar[dd, "C"] & & \wedge^2 H^{1,2}(X)\otimes H^{1,2}(X)\\
& S^2 H^{1,2}(X)\otimes H^{1,2}(X)\ar[ur] & \\
S^3 H^{1,2}(X)\ar[ur]& &
\end{tikzcd}
\]
where $C\in S^3 H^{1,2}(X)$ is the cubic form associated to $Y\in H^0(-K_X)$. 

From Proposition \ref{prop:NormalBundle}, we obtain the following description of $\delta e_b$. 
 
\begin{cor}\label{cor:NormalBundle} The map for $\delta e_b$ is $H^{2,1}(X)\to S^2H^{1,2}$ by first partials of $C$. Hence
$$\delta e_b\in H^{1,1}(Y)_R\otimes \frac{S^2 H^{1,2}(X)}{J_2(C)}. $$
\end{cor}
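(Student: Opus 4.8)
The plan is to derive Corollary \ref{cor:NormalBundle} by directly unwinding the second component $\delta e_b$ of the infinitesimal invariant in the special fiber $S=S_Y$, and matching it against the map produced in Proposition \ref{prop:NormalBundle}. Recall that by the decomposition given just before Proposition \ref{prop:DeltaA}, the piece $\delta e_b$ lives in
\[
H^{1,1}(Y)_R\otimes \coker\bigl(H^{2,1}(X)\overset{\nabla_X}\to H^{1,2}(X)\otimes H^{1,2}(X)\bigr),
\]
where I have used the identification $T_{S_Y}\cong H^1(\Omega_X^2)=H^{2,1}(X)$ from \eqref{eq:TangentFiber}. So the first task is to identify the relevant cokernel explicitly. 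The map $\nabla_X\colon H^{2,1}(X)\to H^{2,1}(X)^*\otimes H^{1,2}(X)\otimes H^{1,2}(X)$ is, after contracting against $T_{S_Y}=H^{2,1}(X)$, precisely the derivative of the period map of $X$; by the cubic-form construction of \ref{sss:CubicFano} it sends a section to the associated quadric, i.e. to the first partials of $C$. Concretely, the image of $H^{2,1}(X)$ inside $S^2H^{1,2}(X)$ is exactly $J_2(C)$, the span of the first partials of the cubic, since that is how $J_2(C)$ was defined in Proposition \ref{prop:NormalBundle}.

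Next I would pin down the map into $\delta e_b$ itself. Starting from $Y\in\abs{-K_X}$ with section $s_Y$, the extension data is governed by $\nabla\mu_F$ as in \ref{ss:DeltaInvariantGeneral}; restricting to $S_Y$ and passing to the graded $(-1)$-piece of $\Hom_\C(H^3(X),H^2(Y)_R)$ feeding the factor $H^{1,1}(Y)_R$, the relevant component of $\nabla$ is the cup-product pairing coming from $s_Y\colon H^{2,1}(X)\to H^1(T_X)$ composed with $d\Phi_X\colon H^1(T_X)\to S^2H^{2,1}(X)^*$. By the commutative triangle recorded in Proposition \ref{prop:NormalBundle}, this composite is exactly the diagonal map given by the first partials of $C$, so the induced map $H^{2,1}(X)\to S^2H^{1,2}(X)$ is ``first partials of $C$'' and factors through the quotient by $J_2(C)$ on the cokernel side. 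This is the content of the first sentence of the corollary.

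Assembling these, $\delta e_b$ is the composite $H^{2,1}(X)\to S^2H^{1,2}(X)\to S^2H^{1,2}(X)/J_2(C)$ tensored with the fixed factor $H^{1,1}(Y)_R$, which lands in $H^{1,1}(Y)_R\otimes \bigl(S^2H^{1,2}(X)/J_2(C)\bigr)$, giving the displayed formula. The one genuinely substantive point, rather than bookkeeping, is verifying that the cokernel appearing abstractly in the spectral sequence \eqref{eq:SSExt} coincides on the nose with the quotient $S^2H^{1,2}(X)/J_2(C)$ from Proposition \ref{prop:NormalBundle} --- i.e.\ that the image of $\nabla_X$ is precisely $J_2(C)$ and not merely contained in it. I expect this to be the main obstacle, and it is handled by appealing directly to the definition of $J_2(C)$ together with the factorization through $d\Phi_X$ established in Proposition \ref{prop:NormalBundle}; the matching of $H^{1,2}(X)\otimes H^{1,2}(X)$ with $S^2H^{1,2}(X)$ uses that cup product on cohomology symmetrizes the form-level contraction, exactly as in the commutativity argument of Proposition \ref{prop:CubicComplex}.
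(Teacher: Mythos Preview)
Your proposal is correct and follows essentially the same route as the paper, which simply records the corollary as an immediate consequence of Proposition~\ref{prop:NormalBundle} together with the diagram displayed after Proposition~\ref{prop:DeltaA}. Your write-up just makes explicit the identification of $\nabla_X$ on $S_Y$ with the composite $s_Y$ followed by $d\Phi_X$, which is exactly the commutative triangle in Proposition~\ref{prop:NormalBundle}.

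One small comment: the concern you flag at the end --- that the image of $\nabla_X$ might only be \emph{contained in} $J_2(C)$ rather than equal to it --- is not a genuine obstacle. By definition $J_2(C)$ is the span of the first partials of $C$, and the map $H^{2,1}(X)\to S^2 H^{1,2}(X)$ you have identified sends a basis of $H^{2,1}(X)$ precisely to those partials; so the image is $J_2(C)$ on the nose, with nothing further to verify.
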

Note that from (\ref{tkzdg:normalbundleperiod}), we know that $\frac{S^2 H^{1,2}(X)}{J_2(C)}$ is the normal bundle of the image of $TS_Y$,  the tangent bundle of fibers of $\Fg\to \Kg$.

\subsection{Period map in dimension three}
In the case of consideration, $(X,Y)$ where $X,Y$ are smooth, $X$ is a Fano threefold, and $Y\in \abs{-K_X}$, it is well known that the deformation space of pairs $\Def(X,Y)$ is smooth with tangent space $H^1(T_X(-\log Y))$.

\medskip

\noindent The differential of the period mapping for the mixed Hodge structure on $H=H^3(X\setminus Y)$, obtained from the exact sequence
\begin{equation}\label{eq:ExSeqHdg1}
0\to H^3(X)\to H^3(X\setminus Y)\to H^2(Y)(-1)\to H^4(X)\to 0,
\end{equation}
 is:
\begin{equation}\label{eq:diff}
H^1(T_X(-\log Y))\to \Hom(\Gr_F^3 H,\Gr_F^2 H)\oplus \Hom(\Gr_F^2 H,\Gr_F^1 H).
\end{equation}
The proof of Proposition \ref{prop:CYInfTor} can be rewritten as follows.

 We have that
\begin{equation}\label{eq:Gr}
\begin{array}{l}
\Gr_F^3 H \cong H^0(\Omega_Y^2)(-1)=:H^{3,1},\\
\Gr_F^2 H \cong H^1(\Omega_Y^1)(-1)\oplus H^1(\Omega_X^2)=:H^{2,2}\oplus H^{2,1},\\
\Gr_F^1 H \cong H^2(\mathscr{O}_Y)(-1)\oplus H^2(\Omega_Y^1):=H^{1,3}\oplus H^{1,2}.
\end{array}
\end{equation}

Here, we have use the canonical splitting as in remark \ref{rem:CanDec}.

The differential of the period mapping may be pictured by 

\begin{equation}\label{eq:DifDiag}
\begin{tikzcd}
\overset{\ \ \ H^{3,1}}\bullet   \arrow[r,red, "(1)"]  \arrow[dr, ,blue, "(1)"] & \overset{\ \ \ H^{2,2}}\bullet \arrow[dr,blue,"(2)"] \arrow[r,red,"(1^*)"] & \overset{\ \ \ H^{1,3}}\bullet \hspace{15pt} \tikzmark{bracebegin}   \\
    & \underset{H^{2,1}}\bullet \arrow[r,red, "(2)"] &  \underset{H^{1,2}}\bullet \ \  \big\rbrace W_3  \hspace{2pt} \tikzmark{braceend} 
\end{tikzcd}
\end{equation}

\begin{tikzpicture}[overlay,remember picture]
  \draw[decorate,decoration={brace}] ( $ (pic cs:bracebegin) +(0, 10pt)  $ ) -- node[right] {$W_4$} ( $ (pic cs:braceend) -(0, 5pt) $ );
  \draw[gray, thick] (4.2,2.5) --node[left] {$F^3$} (4.2,.5);
  \draw[gray, thick] (6,2.5) --node[left] {$F^2$} (6,.5);

  \end{tikzpicture}

Here we represent the tangent space to a family in $\Def(X,Y)$ by a bullet point $\bullet$ and the arrows represent its action on the graded pieces. 
  
Using a choice of $\Omega\in H^0(\Omega_X^3(\log Y))\cong \mathbb{C}$, we have: 
\begin{align*}
H^1(T_X(-\log Y))\cong H^1(\Omega_X^2(\log Y))\\
\Gr_F^3 H\cong \mathbb{C}, \Gr_F^2 \cong H^1(\Omega_X^2(\log Y))
\end{align*}
and then $\color{red} (1)\color{black}\oplus \color{blue} (1)  $ equals the identity.

\begin{rem} \begin{enumerate}
\item  In (\ref{eq:Gr}) and in the diagram (\ref{eq:DifDiag}), we have split the weight filtration. Thus, e.g., the mapping $\color{blue} (1)$ is only intrinsically defined in the kernel of $\color{red} (1)$. In more detail, for a given $\eta \in H^1(T_X(-\log Y))$, there is a derivative of the period map 

\[\begin{tikzcd} & 0\ar[d] \\
& H^{2,1} \ar[d] \\
\C\cong H^{3,1}\cong F^3 H^3(X\setminus Y) \arrow[ur,"(1)" ,dashed, color=blue] \ar[r] \ar[rd, "(1)", color=red] & \frac{F^2 H^3(X\setminus Y)}{F^3 H^3(X\setminus Y)} \ar[d]\\
& H^{2,2}\ar[d] \\
& 0
\end{tikzcd}
\]
using the notation of (\ref{eq:DefSq1}), we have that: {\color{red} (1)} depends only on $\pi_Y(\eta)$ and therefore the kernel of {\color{red} (1)} on $T\Def(X,Y)$ equals the kernel of $\pi_Y$. On the other hand, {\color{blue} (1)} is only well-defined on $\ker \pi_Y
$, and is an isomorphism from $\ker \pi_Y\subset T\Def(X,Y)$ to $H^{2,1}$.

\item Thus, not only is infinitesimal Torelli true for pairs $(X,Y)$ as above, the differential of the period mapping is injective on the first piece in (\ref{eq:diff}) of that mapping. \end{enumerate}
\end{rem}

The top row in (\ref{eq:DifDiag}) is intrinsically defined; i.e., no splitting of filtrations is involved. Moreover, there is a natural mapping 
\begin{equation}\label{eq:maptoK3}
\Def(X,Y)\to \Def(Y)
\end{equation}
and the top row may be identified with the differential of the period mapping of the polarized $K3$ surface $Y$. Moreover, $\color{red} (1^*)$ is the dual of $\color{red} (1)$.
\begin{rem} By what was just said the kernel of the top row in (\ref{eq:DifDiag}) defines the tangent space to the fibres of (\ref{eq:maptoK3}). We may think of such a fiber as a deformation $X_t$ of $X$ where all $X_t$ contain a fixed $Y$ (an anchored deformation of $(X,Y)$). On such a fiber the differential of the period mapping of the $X_t$ is given by $\color{red}(2)$ in the bottom row of the diagram (\ref{eq:DifDiag}). In principle, then we have:

On the kernel of $\color{red}(1)$ the mapping $\color{red}(2)$ is determined by the mapping $\color{blue}(1)$.
\end{rem}

\begin{ques} How can we express $\color{red}(2)$ in terms of $\color{red}(1)$ $+$ $\color{blue}(1)$?
\end{ques}

We have a dual sequence to (\ref{eq:ExSeqHdg1}),
\begin{equation}\tag{\ref{eq:ExSeqHdg1}*}
0\to H^2(X)\to H^2(Y)\to H^3(X,Y)\to H^3(X)\to 0.
\end{equation}
This induces a MHS on $H^3(X,Y)$ which is dual to that of $H^3(X\setminus Y)$. Let us denote $H^{1,1}(Y)/H^{1,1}(X)$ by $H^{1,1}(Y)_{\NEW}$. The dual of (\ref{eq:DifDiag}) is:

\begin{equation}\tag{\ref{eq:DifDiag}*} \label{eq:DifDiag*}
\begin{tikzcd}
\overset{H^{2,1}(X)}\bullet \arrow[dr,blue,"\check{(2)}"] \arrow[r,red,"\check{(2)}"] & \overset{H^{1,2}(X)}\bullet \ar[dr,blue,"\check{(1)}"]   \\
 \overset{H^{2,0}(Y)}\bullet\arrow[r,red,"\check{(1)}"]   & \overset{H^{1,1}(Y)_{\NEW}}\bullet \arrow[r,red,"(\check{1}^*)"] &  \overset{H^{0,2}(Y)}\bullet
\end{tikzcd}
\end{equation}

\subsection{Relation with Abel-Jacobi map}
Consider $f\in \Mg$, this is, $f:Y\to X$ and $Y\subset X$ as above. Denote by $G_f\in Y\times X$ its graph. We have
\begin{align*}
[G_f]\in H^6(Y\times X) \cong & (H^0(Y)\otimes H^6(X))\oplus (H^2(Y)\otimes H^4(X))\\
&\oplus (H^4(Y)\otimes H^2(X))\\
 \cong & \Hom(H^0(X),H^0(Y))\oplus \Hom(H^2(X),H^2(Y))\\ 
 &\oplus \Hom(H^4(X),H^4(Y)).
\end{align*}
Now, consider $p_0\in Y$ and 
\begin{align*}
&d_f=p_0\times f(Y), \ [d_f]\in H^4(Y)\otimes H^2(X)\cong \Hom(H^4(X),H^4(Y))\\
&c_f=Y\times f(p_0), \ [c_f]\in H^0(Y)\otimes H^6(X)\cong \Hom(H^0(X),H^0(Y))
\end{align*}
Now $[G_f-d_f-c_f]\in H^2(Y)\otimes H^4(X)\cong \Hom(H^2(X), H^2(Y))$. Recall that $H^2(X)\overset{f^*}\to H^2(Y)_{\NEW}$ is $0$. Note that $H^2(Y)=f^*H^2(X)\oplus H^2(Y)_{\NEW}$ is a direct sum of HS.

Define $Z_f=G_f-d_f-c_f$. It projects to zero in $H^6(Y\times X) \mod f^*H^2(X)\otimes H^4(X)$. Thus 
$$\AJ(Z_f)_{\NEW}\in \frac{H^5(Y\times X)}{F^3 H^5(Y\times X)+H_{\Z}^5(Y\times X)} \mod \Ima(f^*H^2(X)\otimes H^3(X)) $$
is well-defined, because $[Z_f]_R=0$.
\begin{thm} We have that $$\delta e_{Y/X}= d \AJ(Z_f)_{\NEW}$$ after appropriate identification. 
\end{thm}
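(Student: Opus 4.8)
The plan is to realize the extension class $e_{Y/X}$ geometrically as the Abel--Jacobi image of the cycle $Z_f$, and then to differentiate this identification. The two sides of the asserted equality are the infinitesimal invariants of, respectively, the varying MHS extension \eqref{eq:DeltaInvFano} and the varying algebraic cycle $Z_f$; once the underlying classes are matched as elements of the same intermediate Jacobian, their infinitesimal invariants are forced to agree, and the content of the theorem is exactly that both live in, and coincide in, the middle cohomology of the Koszul-type complex \eqref{eq:SSExt}.

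First I would establish the identification of classes $\AJ(Z_f)_{\NEW}=e_{Y/X}$. By K\"unneth, together with $H^3(Y)=0$ and (for $X$ a Fano threefold) $H^1(X)=H^5(X)=0$, the only surviving K\"unneth component of $H^5(Y\times X)$ is $H^2(Y)\otimes H^3(X)$, so the relevant part of $\AJ(Z_f)$ lands in the Jacobian built from this piece. As the excerpt records, $[Z_f]$ in $H^2(Y)\otimes H^4(X)\cong\Hom(H^2(X),H^2(Y))$ is the restriction $f^\ast$, whence $[Z_f]_{\NEW}=0$ after projecting to $H^2(Y)_{\NEW}$; this is precisely the homological triviality needed for $\AJ(Z_f)_{\NEW}$ to be defined. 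On this K\"unneth piece Carlson's description \cite{C85} of the intermediate Jacobian
$$\frac{H^2(Y)_{\NEW}\otimes H^3(X)}{F^0+H_{\mathbb{Z}}}\cong \Ext^1_{\MHS}\big(H^3(X),H^2(Y)_{\NEW}\big)$$
identifies $\AJ(Z_f)_{\NEW}$ with an extension class, and I would check that this extension is exactly the one cut out by \eqref{eq:DeltaInvFano}, i.e. equals $e_{Y/X}$. The cleanest way to see this is that the graph correspondence $G_f$ induces $f^\ast\colon H^\ast(X)\to H^\ast(Y)$ and therefore governs the connecting homomorphism of the relative sequence; subtracting $d_f$ and $c_f$ only removes the components in $H^4(Y)\otimes H^2(X)$ and $H^0(Y)\otimes H^6(X)$ that do not bear on this extension, so $Z_f$ carries precisely the extension datum. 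Concretely, comparing the integral section $\mu_{\mathbb{Z}}$ and the $F^0$-section $\mu_F$ of \S\ref{ss:DeltaInvariantGeneral} with the chain integral $\int_\Gamma$, $\partial\Gamma=Z_f$, representing $\AJ(Z_f)$, yields the same difference class.

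Next I would differentiate. As $f$ moves in $\Mg$ both $e_{Y/X}$ and $\AJ(Z_f)_{\NEW}$ vary, and applying the Gauss--Manin connection $\nabla$ gives, on one side, the class $\nabla e_{Y/X}$ whose image in the middle cohomology of \eqref{eq:SSExt} is by definition $\delta e_{Y/X}$, and on the other the derivative of the normal function $f\mapsto\AJ(Z_f)_{\NEW}$. By the theory of the infinitesimal invariant of a normal function \cite{G89}, this derivative is computed by cup product with the Kodaira--Spencer class and lands in that same middle cohomology; it is what we denote $d\AJ(Z_f)_{\NEW}$. Since the two classes agree before differentiation, their images under $\nabla$ agree and define the same element, giving $\delta e_{Y/X}=d\AJ(Z_f)_{\NEW}$.

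The main obstacle I expect is the compatibility of flat structures in the differentiation step: one must verify that the integral lattice used to define $\AJ$ is moved by the same connection $\nabla$ that annihilates $\mu_{\mathbb{Z}}$ in \S\ref{ss:DeltaInvariantGeneral}, so that the derivative of the geometric identification is literally the same operator on both sides and the two a priori distinct ``derivatives'' land in, and agree in, the Koszul-type middle cohomology. A secondary technical point is bookkeeping the $\mod \Ima\big(f^\ast H^2(X)\otimes H^3(X)\big)$ correction in the target of $\AJ(Z_f)_{\NEW}$ and matching it with the passage from $H^2(Y)$ to $H^2(Y)_{\NEW}$ in $e_{Y/X}$; this is where the ``appropriate identification'' in the statement is pinned down, and where the splitting into $\delta e_a,\delta e_b$ from Proposition \ref{prop:DeltaA} and Corollary \ref{cor:NormalBundle} should be used to confirm the match component by component.
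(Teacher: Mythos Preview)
Your approach is valid but genuinely different from the paper's. You argue at the level of the classes themselves: first establish the integrated identity $\AJ(Z_f)_{\NEW}=e_{Y/X}$ in $\Ext^1_{\MHS}(H^3(X),H^2(Y)_{\NEW})$ via Carlson's description, and then differentiate, so that the infinitesimal invariants must coincide. The paper, by contrast, never proves the integrated statement; it works directly at the infinitesimal level. It identifies the tangent space $T_{\Mg}(f)\cong H^0(Y,f^*T_X)=H^0(T_X|_Y)$, writes $d\AJ$ explicitly as the contraction $(\alpha,\eta)\mapsto \alpha|_Y\iprod\eta\in H^{p-1,q}(Y)$, and then observes that $H^0(T_X|_Y)\subset H^1(\Omega_X^2)=T_{S_Y}$ sits inside the tangent space along which $\delta e_a,\delta e_b$ were already computed in Proposition~\ref{prop:DeltaA} and Corollary~\ref{cor:NormalBundle}; the two sides are then matched as the same cup/contraction pairings. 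So the paper's argument is a pointwise comparison of two explicit maps out of $H^0(T_X|_Y)$, while yours deduces everything from a global equality of normal functions.

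What each buys: your route is more conceptual and, once the Carlson identification is in hand, makes the equality of infinitesimal invariants automatic, including the compatibility of the $\delta e_a/\delta e_b$ splitting with the K\"unneth pieces of $d\AJ$. The paper's route avoids having to prove the global identity $\AJ(Z_f)_{\NEW}=e_{Y/X}$ and instead leverages the concrete formulas already established for $\delta e_a,\delta e_b$. The place where your proposal is thin is precisely the step you flag: you assert the Carlson identification but do not carry it out for this particular extension (the relative sequence $0\to H^2(Y)_{\NEW}\to H^3(X,Y)\to H^3(X)\to 0$) and this cycle $Z_f$, including the bookkeeping of the $\bmod\ \Ima(f^*H^2(X)\otimes H^3(X))$ correction. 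That verification is the actual content, and in the paper it is replaced by the explicit contraction computation; if you pursue your route you should supply it rather than cite \cite{C85} as a black box.
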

\begin{proof}
The identification is 
\begin{align*}
\Hom(H^3(X),H^2(Y)_{\NEW})&\cong H^3(X)\otimes H^2(Y)_{\NEW}(-3)\\
F^p\Hom(H^3(X),H^2(Y)_{\NEW})&\cong F^{p+3}(H^3(X)\otimes H^2(Y)_{\NEW})
\end{align*}
so 
\begin{equation}\label{eq:IdentificationAJ}
\frac{F^{-1}\Hom(H^3(X),H^2(Y)_{\NEW})}{F^0\Hom(H^3(X),H^2(Y)_{\NEW})}\cong \frac{F^2(H^3(X)\otimes H^2(Y)_{\NEW})}{F^3(H^3(X)\otimes H^2(Y)_{\NEW})}
\end{equation}
and then $\delta e_{Y/X}$ and $d \AJ(Z_f)_{\NEW}$ belong to a complex whose middle term is the tensor product of $T^*$ and either term of (\ref{eq:IdentificationAJ}). For example, for $\delta e_{Y/X}$ we have as in \ref{ss:DeltaInvariant}

\[
\begin{tikzcd}
 {F^0\Hom_\mathbb{C}(H^3(X),H^2(Y)_R)} \arrow[r]
& \arrow[d, phantom, ""{coordinate, name=Z}] T_S^*\otimes F^{-1}\Hom_\mathbb{C}(H^3(X),H^2(Y)_R) \arrow[d,
"\nabla" above,
rounded corners,
to path={ -- ([xshift=2ex]\tikztostart.east)
|- (Z) [near end]\tikztonodes
-| ([xshift=-2ex]\tikztotarget.west)
-- (\tikztotarget)}] \\
 & \wedge^2 T_S^*\otimes F^{-2}\Hom_\mathbb{C}(H^3(X),H^2(Y)_R). 
\end{tikzcd}
\]

Now for the tangent space to $\Mg$ at $f$ we have that
$$T_{\Mg}(f)\cong H^0(Y,N_{f(Y)/Y\times X})\cong H^0(Y, f^* T_X), $$
and that $d\AJ:H^0(Y,f^* T_X)\to F^2 H^3(Y\times X)/F^3(Y\times X)$. This map can be described as follows: let
$$\alpha\in H^{p,q}(X), \eta \in H^0(T_X|_Y).$$
Now $\alpha|_Y \lrcorner \eta$ gives an element of $H^{p-1,q}(Y)$.
We have that $$H^{p,q}(X)^*\otimes H^{p-1,q}(Y)\cong H^{3-p,3-q}(X)\otimes H^{p-1,q}(Y)\subset H^{2,3}(X\times Y)=\frac{F^2 H^5(Y\times X)}{F^3H^5(Y\times X)}.$$
This induces the description of the element $d\AJ(Z_f)_R$ by passing to the quotient.

Now for the extension class, we have that from (\ref{eq:BES1}) and (\ref{eq:BES4}), it follows that $H^0(T_X|_Y)\subset H^1(\Omega_X^2)$ consists of those elements that let fixed $X$ and $Y$ in moduli. It induces two maps $$ H^0(T_X|_Y)\to H^{0,2}(Y)\otimes H^{2,1}(X)\cong H^{2,1}(X)$$
and 
$$H^0(T_X|_Y)\to H^{1,1}(Y)_R\otimes H^{1,2}(X).$$
These correspond to $\delta e_a$ and $\delta e_b$ from Proposition \ref{prop:DeltaA} and Corollary \ref{cor:NormalBundle} respectively.
\end{proof}

We have also a classical Abel-Jacobi map associated to the pair $(X,Y)$ and two embeddings $f, f_0:Y\to X$. Let $\lambda\in \Pic(Y)$, the algebraic $1$-cycle $f(\lambda)-f_0(\lambda)$  is homologous to zero in $X$. Thus 
$$\AJ_X(f(\lambda)-f_0(\lambda))\in J(X) $$
is well-defined. 

\section{The Cubic 3-fold}\label{sec:CubThree}
\subsection{Notation}
Let $X\subset \mathbb{P}V^\ast \cong \mathbb{P}^4$ be a cubic threefold and fix equations: $X=\{F=0\}$, take $Y\in \abs{-K_X}$ with $Y=\{F=0, Q=0\}$ and suppose that both $X,Y$ are smooth. Let us denote by $J_F$ the ideal in $S^\bullet V=\Sym^\bullet V$ generated by the first partials of $F$ and denote $J_F\cap S^k V$ by $J_{F,k}$.

Note that 
$$F\in S^3 V, \ Q\in S^2 V,$$
$$H^{2,1}(X)=V, \ H^{1,2}=S^4 V/J_{F,4}, \ H^{3,3}(X)\cong S^5 V/J_{F,5} \cong \mathbb{C} $$
Note that in this last isomorphism with $\mathbb{C}$, a choice of an isomorphism $K_{\mathbb{P}V^*}\cong \mathscr{O}_{\mathbb{P}V^\ast}(-5)$ is made.

\noindent There is a map $H^0(-K_X)\cong S^2 V\to S^3(V^\ast)\cong S^3 H^{2,1}(X)^\ast$ given by multiplication 
$$S^2 V\otimes S^3V\to S^5V/J_F \cong \mathbb{C}. $$

Let $C$ be the cubic form constructed in Section \ref{ss:CubicForms}. 

\begin{lem} The map $H^0(-K_X)\cong S^2 V\to S^3(V^\ast)\cong S^3 H^{2,1}(X)^\ast$ is induced by the cubic form $C$.
\end{lem}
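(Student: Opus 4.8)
The plan is to evaluate each arrow occurring in the construction of $C$ in \ref{sss:CubicFano} explicitly, using Griffiths' residue description of the Hodge theory of the cubic threefold $X=\{F=0\}$ in terms of the graded pieces of the Jacobian ring \cite{CG80,G69}, and to check that the whole composite collapses to the multiplication $S^2V\otimes S^3V\to S^5V/J_{F,5}\cong\mathbb{C}$. First I would record the identifications I will need: beyond the ones already stated in the excerpt ($H^{2,1}(X)\cong V$, $H^{1,2}(X)\cong S^4V/J_{F,4}$, $S^5V/J_{F,5}\cong\mathbb{C}$), one has $H^1(T_X)\cong S^3V/J_{F,3}$, under which the cup-product map $H^0(-K_X)\otimes H^{2,1}(X)\to H^1(T_X)$ — which is multiplication by the section via $\Omega_X^2\cong T_X\otimes K_X$ — becomes Jacobian-ring multiplication
\[
H^0(-K_X)\otimes H^{2,1}(X)=S^2V\otimes V\longrightarrow H^1(T_X)\cong S^3V/J_{F,3}.
\]
The essential input \cite{CG80,G69,CGGH83} is that the differential of the period map
\[
H^1(T_X)\cong S^3V/J_{F,3}\longrightarrow\Hom\big(H^{2,1}(X),H^{1,2}(X)\big)\cong\Hom\big(V,\ S^4V/J_{F,4}\big)
\]
is again induced by multiplication in the Jacobian ring, and that the polarization pairing $H^{2,1}(X)\otimes H^{1,2}(X)\to\mathbb{C}$ is (a nonzero multiple of) the Jacobian multiplication $V\otimes(S^4V/J_{F,4})\to S^5V/J_{F,5}\cong\mathbb{C}$.

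Next I would thread $Q\in H^0(-K_X)=S^2V$ through the three steps defining $C$. Writing $\{\omega_i\}$ for a basis of $H^{2,1}(X)=V$ with dual basis $\{\omega_i^*\}$, the first arrow sends $Q$ to $\sum_i(Q\,\omega_i)\otimes\omega_i^*$ with $Q\,\omega_i\in S^3V/J_{F,3}$; applying the period map turns each $Q\,\omega_i$ into the quadratic form $(v,v')\mapsto Q\,\omega_i\,v\,v'\bmod J_{F,5}$ on $H^{2,1}(X)$, where $v,v'$ are read as elements of $V$; the final symmetric multiplication $S^2H^{2,1}(X)^*\otimes H^{2,1}(X)^*\to S^3H^{2,1}(X)^*$ then produces the cubic form
\[
C(Q)(v)=\sum_i\big(Q\,\omega_i\,v^2\bmod J_{F,5}\big)\,\omega_i^*(v).
\]
Using the dual-basis identity $\sum_i\omega_i^*(v)\,\omega_i=v$ together with commutativity of the ring $S^\bullet V/J_F$, this sum collapses to $C(Q)(v)=Q\,v^3\bmod J_{F,5}$. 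That is exactly the value on the diagonal of the multiplication map $S^2V\to(S^3V)^*\cong S^3V^*$, $Q\mapsto\big(P\mapsto QP\bmod J_{F,5}\big)$, which under $S^3V^*\cong S^3H^{2,1}(X)^*$ is the asserted map. Hence the two maps agree.

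The main obstacle is the bookkeeping in the middle step: I must be certain that the target $S^2H^{2,1}(X)^*$ of the period map is reached through the \emph{same} pairing $V\otimes(S^4V/J_{F,4})\to S^5V/J_{F,5}\cong\mathbb{C}$ that the residue calculus assigns to the polarization, so that $\Hom(H^{2,1},H^{1,2})$ is identified with $H^{2,1}(X)^*\otimes H^{2,1}(X)^*$ compatibly and its image genuinely lands in the symmetric part. Granting the Griffiths--Carlson compatibility cited above, the symmetry is automatic, since $(v,v')\mapsto Q\,\omega_i\,v\,v'\bmod J_{F,5}$ is manifestly symmetric; and because the statement asserts only that the two maps are \emph{induced by} one another, the single residue normalization constant in $S^5V/J_{F,5}\cong\mathbb{C}$ is irrelevant and may be ignored throughout.
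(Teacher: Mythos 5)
Your argument is correct, and it reaches the paper's conclusion by a more computational route. The paper does not unwind the three arrows separately: it first shows that the composite defining $C$ factors through the intrinsic triple cup product $S^3H^1(\Omega_X^2)\to H^3(\wedge^3\Omega_X^2)\to H^3(2K_X)\cong H^0(-K_X)^*$, using $\wedge^3\Omega_X^2\cong 2K_X$ (this is just the $n=3$ case of the Lemma in \ref{ss:FanoNForm}), so that the only residue input needed is the identification of this single pairing with the multiplication $S^2V\otimes S^3V\to S^5V/J_{F,5}\cong\mathbb{C}$, and even that step is left implicit. You instead identify each arrow --- the cup product into $H^1(T_X)\cong S^3V/J_{F,3}$, the differential of the period map, and the polarization --- with Jacobian-ring multiplication via Carlson--Griffiths, and then verify by the dual-basis computation that the composite collapses to $Q\mapsto\bigl(v\mapsto Qv^3\bmod J_{F,5}\bigr)$, which determines the symmetric form. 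Your route requires more inputs from the residue calculus (notably the Carlson--Griffiths description of the period-map differential, which the paper's factorization through the cup product sidesteps), but in exchange it makes explicit the symmetry of the middle step and the irrelevance of the normalization constant, both of which the paper leaves to the reader. Both arguments are sound.
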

\begin{proof}
Recall that the cubic form is defined by 
$$H^1(\Omega_X^2)\otimes H^0(-K_X)\to H^1(T_X) $$
and the derivative of the period map 
$$H^1(T_X)\to S^2 H^1(\Omega_X^2)^*.$$
So $H^0(-K_X)\to H^1(\Omega_X^2)^*\otimes S^2 H^1(\Omega_X^2)^*$.

We have also a natural map 
$$\wedge^3 T_X\to -K_X.$$

Since $$\Omega_X^2\cong T_X \otimes K_X,$$
we get
$$ \wedge^3 \Omega_X^2 \to 2 K_X. $$

Now, 
$$ S^3 H^1(\Omega_X^2)\to H^3(\wedge^3 \Omega_X^2)\to H^3(2_K)\cong H^0(-K_X)^*, $$
so $S^3H^1(\Omega_X^2)\to H^3(K_X^2)\cong H^0(-K_X)^*$.

 Dually, $H^0(-K_X)\to S^3 H^1(\Omega_X^2)^*$. This is the cubic form.
\end{proof}
\noindent We have the pairing $\langle,\rangle: S^k V^\ast \otimes S^k V\to \mathbb{C}$.

\subsection{Cubic form}

Here we will use the standard notation for $I, J$ ideals of a ring $R$
$$(I:J):=\{r\in R \mid rJ\subseteq I\} $$
We call it \emph{the ideal quotient}.
\subsubsection{Condition for triviality}

Let $C$ be the cubic form constructed in Section \ref{ss:CubicForms} corresponding to $Q\in \abs{-K_X}$. 

\begin{lem}\label{lem:ZeroCubic} We have that $C=0$ if and only if $Q\in J_{F,2}$. 
\end{lem}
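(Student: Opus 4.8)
The statement asserts that the cubic form $C$ attached to $Q \in |-K_X|$ vanishes identically if and only if $Q \in J_{F,2}$, i.e. $Q$ lies in the degree-$2$ part of the Jacobian ideal of $F$. The plan is to unwind the explicit description of $C$ given in the preceding lemma, where the map $H^0(-K_X)\cong S^2V \to S^3(V^\ast)$ is realized as multiplication
$$
S^2 V \otimes S^3 V \to S^5 V / J_F \cong \mathbb{C},
$$
and then identify precisely the source of any degeneracy with membership in the Jacobian ideal.

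**Main steps.** First I would reduce the vanishing of the cubic form $C$ to the vanishing of a trilinear pairing. Recall from the notation that $H^{2,1}(X) = V$ and $H^{1,2}(X) = S^4 V / J_{F,4}$, and that the cubic form, as a cubic on $H^{2,1}(X)^\ast$, is the symmetrization of the map obtained by combining multiplication by $Q$ with the derivative of the period map (the Macaulay–Jacobian multiplication). Concretely, for $v_1, v_2, v_3 \in V = H^{2,1}(X)$, one has
$$
C(v_1,v_2,v_3) = \langle Q \cdot v_1 v_2 v_3 \bmod J_F,\, 1\rangle
$$
under the residue isomorphism $S^5 V / J_F \cong \mathbb{C}$. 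Thus $C = 0$ as an element of $S^3 V^\ast$ precisely when $Q \cdot v_1 v_2 v_3 \in J_{F,5}$ for all $v_1,v_2,v_3 \in V$, i.e. when $Q \cdot S^3 V \subseteq J_{F,5}$.

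**The key algebraic translation.** The heart of the argument is then the equivalence
$$
Q \cdot S^3 V \subseteq J_{F,5} \iff Q \in J_{F,2}.
$$
The implication $(\Leftarrow)$ is immediate, since $J_F$ is an ideal: if $Q \in J_{F,2}$ then $Q \cdot S^3 V \subseteq J_{F,5}$. For $(\Rightarrow)$ I would use the ideal-quotient language just introduced in the excerpt, rewriting the hypothesis as $Q \in (J_F : S^3 V)$ in degree $2$, and then invoke the Macaulay duality / Gorenstein property of the Artinian Jacobian ring $S^\bullet V / J_F$. Since $X$ is smooth, this ring is Artinian Gorenstein with socle in degree $5$ (the perfect pairing $S^k V / J_{F,k} \times S^{5-k}V/J_{F,5-k} \to S^5 V/J_{F,5} \cong \mathbb{C}$ is exactly the pairing used to describe $H^{2,1}$ and $H^{1,2}$ as dual). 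The perfectness of the pairing $S^2 V / J_{F,2} \times S^3 V/J_{F,3} \to \mathbb{C}$ says that a class $Q \bmod J_{F,2}$ is zero if and only if $\langle Q \cdot R\rangle = 0$ for all $R \in S^3 V$ — which is precisely the condition $Q \cdot S^3 V \subseteq J_{F,5}$.

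**Expected obstacle.** The only substantive point is the Gorenstein duality giving perfectness of the pairing in these specific degrees; this is the Macaulay theorem for Artinian complete intersections (the Jacobian ideal of a smooth hypersurface), and it is the same structural fact underlying the Griffiths–Carlson description of $H^\bullet(X)$ cited earlier via \cite{CG80,G69}. I would state it as a known input rather than reprove it, so the proof itself is short: unwind $C$ into the residue pairing, observe $C=0 \Leftrightarrow Q\cdot S^3 V \subseteq J_{F,5}$, and conclude $Q \in J_{F,2}$ by nondegeneracy of the Macaulay pairing in degrees $(2,3)$.
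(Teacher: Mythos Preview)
Your proposal is correct and follows essentially the same route as the paper: both arguments reduce $C=0$ to the condition $Q\cdot S^3V\subseteq J_{F,5}$ and then invoke the non-degeneracy of the Macaulay pairing $S^2V/J_F\otimes S^3V/J_F\to S^5V/J_F\cong\mathbb{C}$ to conclude $Q\in J_{F,2}$. The paper states this more tersely, simply noting that the induced map $S^2V/J_F\to (S^3V/J_F)^\ast\subseteq S^3V^\ast$ is injective, whereas you spell out the intermediate translation via the ideal quotient; the content is the same.
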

\begin{proof}
We have that 
$$\frac{S^2 V}{J_F}\otimes \frac{S^3 V}{J_F}\to \frac{S^5 V}{J_F}\cong \mathbb{C} $$
is a non-degenerate pairing, so 
$$\frac{S^2 V}{J_F}\to (\frac{S^3 V}{J_F})^*\subseteq S^3 V^* $$
is injective.
\end{proof}

It can happen that $C=0$. For example, take $F=\sum_{i=0}^4 z_i^3$ and $Q=\sum_{i=0}^4 z_i^2$. Note that $X=\{F=0\}$ and $Y=\{F=Q=0\}$ are smooth.

\subsubsection{Relation of C to Q and F}

\begin{lem}\label{lem:MaximalIdeal} Let $m=(z_1, \ldots, z_5)$. Then 
$$ (J_{F,k}:m)=J_{F,k-1} \text{ for } k\geq 3. $$
\end{lem}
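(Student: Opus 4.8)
The plan is to transfer the claimed equality of graded pieces into the Milnor (Jacobian) ring $R_F := S^\bullet V/J_F$ and then read it off from Macaulay--Gorenstein duality, the same non-degenerate pairing already used in the proof of Lemma~\ref{lem:ZeroCubic}. Unwinding the ideal-quotient notation, a homogeneous element of $(J_{F,k}:m)$ is an $r\in S^{k-1}V$ with $z_i r\in J_{F,k}$ for every $i$; modulo $J_F$ this says exactly that the class $\bar r\in (R_F)_{k-1}$ is annihilated by all of $m$. The induced map $(J_{F,k}:m)\to \{\bar r\in (R_F)_{k-1} : m\bar r=0\}$ has kernel $J_{F,k-1}$ and is surjective, so the lemma is equivalent to the assertion that the socle of $R_F$ vanishes in degree $k-1$.

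I would first dispose of the inclusion $J_{F,k-1}\subseteq (J_{F,k}:m)$, which is immediate: if $r\in J_{F,k-1}=J_F\cap S^{k-1}V$ then, $J_F$ being an ideal, $z_i r\in J_F\cap S^k V=J_{F,k}$ for all $i$, so $r\,m\subseteq J_{F,k}$. For the reverse inclusion the input is smoothness of $X$: it forces the five partials $\partial F/\partial z_i$ (each of degree $2$) to have no common zero in $\mathbb{P}V^\ast$, hence to form a regular sequence in $S^\bullet V$. Thus $R_F$ is an Artinian complete intersection with Hilbert series $(1+t)^5$; it is Gorenstein, with one-dimensional socle concentrated in the top degree $5$, and the multiplication pairing $(R_F)_a\otimes (R_F)_{5-a}\to (R_F)_5\cong\mathbb{C}$ is non-degenerate for every $a$ (the case $a=2$ being the pairing of Lemma~\ref{lem:ZeroCubic}).

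To finish, suppose $\bar r\in (R_F)_{k-1}$ is killed by every $z_i$, taking $k$ in the range $3\le k\le 5$ so that $1\le 6-k\le 3$. Every monomial spanning $(R_F)_{6-k}$ is divisible by some $z_i$, so for each $\bar s\in (R_F)_{6-k}$ we get $\bar r\,\bar s=0$ in $(R_F)_5$; non-degeneracy of $(R_F)_{k-1}\otimes (R_F)_{6-k}\to (R_F)_5$ then forces $\bar r=0$, i.e. $r\in J_{F,k-1}$. The crux of the whole argument is precisely this Gorenstein property: once the socle is known to sit only in degree $5$, the rest is formal bookkeeping. I would accordingly flag the degree accounting as the one subtle point — the mechanism above needs $k-1<5$, since the socle generator in degree $5$ is the unique place where an $m$-annihilated class survives, while for $k$ so large that $(R_F)_{k-1}$ and $(R_F)_k$ both vanish the identity is trivially true.
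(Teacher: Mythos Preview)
Your proof is correct and follows essentially the same route as the paper: both reduce the question to the non-degeneracy of multiplication in the Jacobian ring $R_F=S^\bullet V/J_F$, which is Gorenstein with socle in degree $5$ because the partials of a smooth cubic form a regular sequence. The paper phrases this as non-degeneracy of $(R_F)_a\otimes (R_F)_b\to (R_F)_{a+b}$ for $a+b\le 5$ and applies it with $a=1$, $b=k-1$; you instead pair $(R_F)_{k-1}$ against $(R_F)_{6-k}$ into the top piece $(R_F)_5$, which is the same mechanism.

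Your final remark is in fact sharper than the paper's statement: the equality $(J_{F,k}:m)=J_{F,k-1}$ genuinely fails at $k=6$, since $(R_F)_5\cong\mathbb{C}$ is exactly the socle and is annihilated by $m$, so $(J_{F,6}:m)=S^5V\supsetneq J_{F,5}$. The paper's own proof silently restricts to $k\le 5$ (the only case used later, in Proposition~\ref{prop:Jf4:Q}), so your flag is well placed.
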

\begin{proof}
Note that $S^a V/J_F\otimes S^b V/J_F\to S^{a+b}V / J_F$ is non-degenerate in each factor if $a+b\leq 5$. Now, 
\begin{align*}
P\in (J_{F,k}:m^l)& \iff z^I P\in J_{F,k} \ \forall \abs{I}=l\\
& \iff z^I P=0 \text{ in } S^kV/J_F \ \forall \abs{I}=l\\
&\iff P=0 \text{ in } S^{k-l} V/J_F \text{ if } k\leq 5\\
& \iff P\in J_{F,{k-l}} \text{ if } k\leq 5
\end{align*}
\end{proof}

\begin{prop}\label{prop:Jf4:Q} The following equality holds
$$J_{C,2}^\perp = (J_{F,4}:Q).$$
\end{prop}

\begin{proof} 
Recall that $<,>$ denote the pairing between $S^k V $ and $S^k V^*$ for any $k$. Then for $P\in S^3 V$, 
\begin{align*}
<P,C>=0 &\iff PQ\in J_{F,5} && \text{ by Lemma \ref{lem:ZeroCubic} }\\
 & \iff P\in (J_{F,5}:Q). &&
\end{align*}
Thus $C$ is a basis for $(J_{F,5}:Q)^\perp \subset S^3 V^*$. If $z_1,\ldots, z_5$ is a basis for $V$, then 
$$<z_i P, R>=<P,\frac{\partial}{\partial z_i}R> \quad \text{ for all} P\in S^k V, R\in S^{k+1}V^*. $$
So for $P\in S^2 V$,
$$<z_i P, C>= <P, \frac{\partial C}{\partial z_i}>. $$
Thus, 
\begin{align*}
P\in J_{C,2}^\perp & \iff mP \in C^\perp && \\
&\iff mP\in (J_{F,5}:Q) &&\text{by Lemma \ref{lem:ZeroCubic}}\\
&\iff  mQP \in J_{F,5} &&\\
&\iff QP\in J_{F,4} &&\text{by Lemma \ref{lem:MaximalIdeal}}\\
&\iff P\in (J_{F,4}:Q). &&
\end{align*}
Thus $J_{C,2}^\perp = (J_{F,4}:Q)$.
\end{proof}

Let 
\begin{equation}\label{eq:mq}
S^2 V\overset{m_Q} \to S^4 V/J_{F,4}\cong \mathbb{C}^5
\end{equation}

\begin{cor} $C$ is non-degenerate if and only if $m_Q$ is surjective.
\end{cor}
\begin{proof}

We have that 
\begin{align*}
C \text{ is non degenerate} & \iff \frac{\partial C}{\partial z_1}, \ldots, \frac{\partial C}{\partial z_5} \text{ are linearly independent} && \text{by Lemma \ref{lem:PropCubics}} \\
& \iff \dim J_{C,2}=5 &&\\
&\iff \dim J_{C,2}^\perp=10 &&\\
&\iff \dim(J_{F,4}:Q)=10 &&\text{by Prop \ref{prop:Jf4:Q}}\\
&\iff S^2V \overset{m_Q}{\to} \frac{S^4 V}{J_{F,4}} \text{ has rank }5 &&\\
&\iff m_Q \text{ is surjective}.
\end{align*}
\end{proof}

The following proposition follows from Lemma \ref{lem:CubicAreSmooth}, which is well-known.
\begin{prop} $C$ is smooth if and only if $$\frac{S^k V^*}{J_C}=0,$$ for $k\geq 6$. 
\end{prop}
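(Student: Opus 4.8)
The plan is to deduce the statement directly from Lemma~\ref{lem:CubicAreSmooth}, which already supplies the equivalence
$$C \text{ smooth} \iff S^6 V^\ast / J_C = 0,$$
and then to promote the single degree-$6$ vanishing to vanishing in every degree $k \geq 6$. Concretely, since the proposition reads $C \text{ smooth} \iff S^k V^\ast/J_C = 0$ for all $k\geq 6$, the only gap left to fill is the purely algebraic equivalence
$$S^6 V^\ast / J_C = 0 \iff S^k V^\ast / J_C = 0 \ \text{ for all } k \geq 6.$$

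First I would dispose of the backward direction, which is immediate: specializing to $k=6$ recovers exactly the condition appearing in Lemma~\ref{lem:CubicAreSmooth}. For the forward direction I would exploit that $J_C$ is the \emph{homogeneous} ideal of $S^\bullet V^\ast$ generated by the quadrics $\partial C/\partial z_i^\ast$, so it is closed under multiplication by elements of $S^\bullet V^\ast$. Assume $S^6 V^\ast \subseteq J_C$. For any $k > 6$ the multiplication map $S^{k-6} V^\ast \otimes S^6 V^\ast \to S^k V^\ast$ is surjective, because every monomial of degree $k$ in the $z_i^\ast$ factors as a monomial of degree $k-6$ times one of degree $6$; hence $S^k V^\ast = S^{k-6} V^\ast \cdot S^6 V^\ast$. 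As $S^6 V^\ast \subseteq J_C$ and $J_C$ is an ideal, we conclude $S^k V^\ast = S^{k-6} V^\ast \cdot S^6 V^\ast \subseteq J_C$, i.e. $S^k V^\ast / J_C = 0$. Combined with Lemma~\ref{lem:CubicAreSmooth}, this gives the proposition.

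I do not expect a genuine obstacle here: the entire content is the elementary observation that a homogeneous ideal whose degree-$6$ part already fills $S^6 V^\ast$ automatically contains all higher graded pieces. As a sanity check, one may organize the same conclusion around the Jacobian (Milnor) algebra $S^\bullet V^\ast / J_C$, which for a smooth cubic in $\mathbb{P}^4$ is an Artinian Gorenstein ring with socle concentrated in degree $(n+1)(d-2) = 5\cdot 1 = 5$ (here $n=4$, $d=3$), whence it vanishes identically in degrees $\geq 6$. This refined structure is consistent with the claim but is not needed, and I would present only the direct multiplicative argument above.
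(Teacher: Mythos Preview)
Your argument is correct and matches the paper's approach: the paper simply asserts that the proposition follows from Lemma~\ref{lem:CubicAreSmooth}, and you have supplied exactly the missing elementary step (that $S^6 V^\ast \subseteq J_C$ forces $S^k V^\ast \subseteq J_C$ for all $k\geq 6$ because $J_C$ is a homogeneous ideal).
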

This allowed us to check via Macaulay2, see Appendix \ref{appendix:one}, that

\begin{thm}\label{thm:GenericSmooth} For generic $X$ and $Y$, the cubic $C$ is smooth.
\end{thm}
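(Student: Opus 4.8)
The plan is to reduce the smoothness of the cubic form $C$ to a concrete, finite computation over a specific field and then invoke semicontinuity to pass from a single example to the generic pair. By the preceding proposition, smoothness of $C$ is equivalent to $S^k V^*/J_C = 0$ for all $k \geq 6$, where $J_C$ is the Jacobian ideal of $C$. Since $C$ is a cubic in five variables, this is a statement about a single explicit ideal in a polynomial ring, and it suffices to check it at $k = 6$ (the vanishing then propagates upward, since $S^{k}V^*/J_C$ is a quotient of $(S^{k-1}V^*/J_C)\otimes V^*$ and the Jacobian ring of a smooth cubic is Artinian). The key realization is that $C$ is not given abstractly but is computed from the defining data $(F,Q)$ of the pair $(X,Y)$ via the pairing $S^2 V \otimes S^3 V \to S^5 V/J_F \cong \mathbb C$, as established in Lemma~\ref{lem:ZeroCubic} and the cubic-form lemma; hence, given explicit polynomials $F \in S^3 V$ and $Q \in S^2 V$, one can write down $C \in S^3 V^*$ explicitly and feed it to Macaulay2.

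First I would fix a single, sufficiently generic choice of $(F, Q)$ — for instance a cubic $F$ with random (or carefully chosen rational) coefficients together with a generic quadric $Q$ — and verify that the resulting $X = \{F = 0\}$ and $Y = \{F = Q = 0\}$ are both smooth, so that the pair lies in $\Mg$ and the construction of $C$ is valid. Next I would have Macaulay2 compute $C$ from $(F,Q)$ by implementing the pairing above: concretely, for a basis $\{P_\alpha\}$ of $S^3 V^*$ (equivalently monomials of degree three in the dual variables), the coefficient of $C$ on $P_\alpha$ is the image of $P_\alpha \cdot Q$ in $S^5 V / J_F \cong \mathbb C$ under the chosen trace isomorphism. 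Having assembled $C$, I would form its Jacobian ideal $J_C = (\partial C/\partial z_1^*, \dots, \partial C/\partial z_5^*)$ and compute $\dim_{\mathbb C} S^6 V^*/J_C$; smoothness holds exactly when this dimension is zero, equivalently when the five partials have no common zero in $\mathbb P^4$ (Lemma~\ref{lem:CubicAreSmooth}).

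The main obstacle is the passage from \emph{one} smooth example to the \emph{generic} statement, and this is where I would be most careful. The smoothness of $C$ is an open condition: the locus in the parameter space of pairs $(X,Y)$ where $C$ fails to be smooth is the image of the vanishing locus of a resultant (or, equivalently, where $\dim S^6 V^*/J_C > 0$), which is Zariski-closed because it is cut out by the vanishing of appropriate minors/resultants whose entries depend polynomially on the coefficients of $F$ and $Q$. Therefore, once the single computed example exhibits $S^6 V^*/J_C = 0$, the complement of the non-smooth locus is a nonempty Zariski-open subset, and the generic pair lies in it. Formally, I would argue: the map sending the coefficients of $(F,Q)$ to the tuple of coefficients of $C$ is polynomial; the condition ``$C$ smooth'' pulls back to a Zariski-open condition on $(F,Q)$; a single witness shows this open set is nonempty; hence it is dense, proving smoothness for generic $(X,Y)$. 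The only subtlety to guard against is that the witness example must itself satisfy the smoothness of $X$ and $Y$ (so that it is a genuine point of the moduli space $\Mg$ and not an artifact), which is why the first step verifies those conditions explicitly; the detailed Macaulay2 code realizing this is recorded in Appendix~\ref{appendix:one}.
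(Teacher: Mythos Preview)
Your proposal is correct and follows essentially the same strategy as the paper: use the criterion $S^6 V^*/J_C = 0$, produce one explicit witness pair $(F,Q)$ via a Macaulay2 computation, and conclude by openness of the smoothness condition. The only cosmetic difference is that the paper's code extracts $C$ as the unique element of $S^3V^*$ orthogonal to $(J_{F,5}:Q)_3$ (via Lemma~\ref{lem:spanC}) rather than by directly evaluating the pairing, and it works over $\mathbb{Z}/31991$ rather than $\mathbb{Q}$; neither changes the argument.
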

\subsubsection{Formula for C}
Recall from Proposition \ref{prop:Jf4:Q} that $\left( \frac{S^2V^*}{J_C} \right)^* \cong (J_{F,4}:Q)$, i.e. $J_C=(J_{F,4}:Q)^\perp$. Now, 
$$\left\langle \frac{\partial C}{\partial z_i^*}, P \right\rangle=0 \iff \left\langle C, z_i P \right\rangle=0 $$
so $C=\left( m\cdot (J_{F,4}:Q) \right)^\perp$. Note that $m\cdot (J_{F,4}:Q)\subseteq (J_{F,5}:Q)$, but it may not be equal. 

On the other hand, $J_{F,5}$ has codimension one in $S^5 V$, so 
$$S^3 V\overset{Q}\to \frac{S^5 V}{J_{F,5}} $$
has codimension one or zero in $S^3 V$, depending on whether $Q\in J_{F,2}$. We thus have proved the following Lemma. 

\begin{lem}\label{lem:spanC} If $Q\not \in J_{F,2}$, then $$span \  C = (J_{F,5}:Q)^\perp$$
\end{lem}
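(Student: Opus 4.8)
The plan is to read the lemma off directly from the identification of $C$ as a linear functional on $S^3 V$ under the apolarity pairing $\langle\,,\,\rangle\colon S^3 V^\ast\otimes S^3 V\to\mathbb{C}$. First I would recall, exactly as in the proof of Proposition \ref{prop:Jf4:Q}, that since $C$ is the image of $Q$ under $S^2V\to S^3(V^\ast)$, for $P\in S^3 V$ one has $\langle P,C\rangle=[PQ]\in S^5 V/J_{F,5}\cong\mathbb{C}$; that is, as a functional $C$ is nothing but $P\mapsto [PQ]$. Consequently its kernel inside $S^3V$ is precisely $\ker C=\{P\in S^3 V: PQ\in J_{F,5}\}=(J_{F,5}:Q)$.

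Next I would invoke the hypothesis $Q\notin J_{F,2}$. By Lemma \ref{lem:ZeroCubic} this is equivalent to $C\neq 0$, so the multiplication map $S^3 V\xrightarrow{\,Q\,} S^5 V/J_{F,5}\cong\mathbb{C}$ is nonzero and its kernel $(J_{F,5}:Q)$ is a hyperplane in $S^3 V$ (codimension one). This is exactly the dichotomy recorded just before the statement: the map $S^3V\to S^5V/J_{F,5}$ has one-dimensional image precisely when $Q\notin J_{F,2}$, and otherwise is identically zero.

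Finally I would apply the elementary double-annihilator identity for the perfect pairing $\langle\,,\,\rangle$: for a nonzero functional $C$ one has $(\ker C)^\perp=\Span C$ inside $S^3 V^\ast$. Combining this with $\ker C=(J_{F,5}:Q)$ yields $\Span C=(J_{F,5}:Q)^\perp$, which is the claim. I do not expect any genuine obstacle here; the argument is pure bookkeeping with the apolarity pairing, and the only points that need care are the identification $\ker C=(J_{F,5}:Q)$ (taken verbatim from the proof of Proposition \ref{prop:Jf4:Q}) and the fact that $\ker C$ is a genuine hyperplane, which is guaranteed by $Q\notin J_{F,2}$ via Lemma \ref{lem:ZeroCubic} so that the double-perp collapses to the line spanned by $C$.
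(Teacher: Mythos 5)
Your argument is correct and is essentially the paper's own: the identification $\langle P,C\rangle=[PQ]$ and hence $\ker C=(J_{F,5}:Q)$ is exactly what is established in the proof of Proposition \ref{prop:Jf4:Q} (where $C$ is already noted to be a basis of $(J_{F,5}:Q)^\perp$), and the codimension-one statement via $Q\notin J_{F,2}\Leftrightarrow C\neq 0$ (Lemma \ref{lem:ZeroCubic}) is the same dichotomy the paper records just before the lemma. No gaps.
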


Let $V$ be a $\mathbb{C}$-vector space with $m=\dim V$. Take $F\in S^d V$ smooth. Then $S^{m(d-2)} V/J_F\cong \mathbb{C}$. We want to get information about the formula for the cubic form. Let us digress a moment and consider the following example. 

\begin{exmp} Let $m=2$ and fix $d$ arbitrary. Consider 
$$F=A_0 x^d +\ldots + A_d y^d $$ and the $(2d-3)\times (2d-3)$ matrix 
\begin{align*}
&\quad \quad {x^*}^{2d-4} \cdots {y^*}^{2d-4}\\
&\left( \quad \begin{array}{c}
x^{d-3} F_x \\
x^{d-4}y F_x \\
\vdots \\ 
x y^{d-4} F_x\\
y^{d-3} F_x\\
y^{d-3} F_y \\
\vdots \\ 
x^{d-4}y F_y \\
x^{d-4} F_y
\end{array} \quad \quad \quad   \right)
\end{align*}

This has degree $2(d-2)$ in the coefficients of $F$, and $$S^{2(d-2)}(S^d V)\to S^d V^*,$$
as
$$ S^{2(d-2)}(S^d V)\to (\det V)^{(d+1)(d-2)}\otimes S^2(d-2)V^*$$
we obtain 
\begin{prop} The following conditions are equivalent:
\begin{itemize}
\item The polynomial in $S^{2d-4} V^*$ we get is zero, 
\item There is a non-trivial solution to $P F_x +Q F_y=0$ with $\deg P=\deg Q= d-3$
\item $F_x,F_y$ have $2$ common roots, counting multiplicities, 
\item $F$ has $2$ singular points or one triple point.
\end{itemize}
\end{prop}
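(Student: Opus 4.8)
The plan is to prove the four conditions equivalent along the chain (i)$\Leftrightarrow$(ii)$\Leftrightarrow$(iii)$\Leftrightarrow$(iv), where (i) is the identical vanishing of the constructed form in $S^{2d-4}V^*$, (ii) the existence of a nontrivial pair $(P,Q)$ with $PF_x+QF_y=0$ and $\deg P=\deg Q=d-3$, (iii) the two-common-root condition on $F_x,F_y$, and (iv) the singularity condition on $F$. For (i)$\Leftrightarrow$(ii) I would first make the construction explicit. Let $M$ be the $(2d-4)\times(2d-3)$ matrix whose rows are the coordinate vectors, in the monomial basis of $S^{2d-4}V$, of the products $x^iy^jF_x$ and $x^iy^jF_y$ with $i+j=d-3$; the displayed array is $M$ with a formal top row of basis elements of $S^{2d-4}V^*$ adjoined, and ``the polynomial we get'' is its expansion along that row. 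Its coefficients are, up to sign, the $2d-3$ maximal $(2d-4)\times(2d-4)$ minors of $M$. Hence the form vanishes identically if and only if every maximal minor vanishes, i.e. $\mathrm{rank}\,M<2d-4$, i.e. the rows of $M$ are linearly dependent. Such a dependence is precisely a relation $PF_x+QF_y=0$ with $P,Q\in S^{d-3}V$ not both zero, which is (ii). The asserted degree $2(d-2)$ is then immediate, each minor being a determinant of size $2d-4$ with entries linear in the coefficients of $F$.

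For (ii)$\Leftrightarrow$(iii) I would factor out the greatest common divisor. Put $D=\gcd(F_x,F_y)$, $e=\deg D$, and write $F_x=DA$, $F_y=DB$ with $\gcd(A,B)=1$ and $\deg A=\deg B=d-1-e$. Cancelling $D$ in $PF_x+QF_y=0$ gives $PA=-QB$, and coprimality of $A,B$ forces $P=BR$, $Q=-AR$ for some $R$ with $\deg R=(d-3)-(d-1-e)=e-2$. A nonzero such $R$ exists if and only if $e\geq 2$, so a nontrivial syzygy of the prescribed degree exists exactly when $\deg\gcd(F_x,F_y)\geq 2$, which is the statement that $F_x,F_y$ have (at least) two common roots counted with multiplicity.

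For (iii)$\Leftrightarrow$(iv) I would use Euler's identity $xF_x+yF_y=dF$: every common zero of $F_x,F_y$ is then a zero of $F$, hence a multiple root, and conversely a short computation in characteristic zero shows that a linear form $\ell=bx-ay$ dividing $F$ to order $k\geq 2$ divides $\gcd(F_x,F_y)$ to order exactly $k-1$ (since $\ell$ cannot divide both $k\ell_x$ and $k\ell_y$). Summing over the multiple roots $p$ of $F$ yields $e=\sum_p\bigl(m_p(F)-1\bigr)$, so $e\geq 2$ holds precisely when $F$ has either two distinct singular points (two roots with $m_p=2$, each contributing $1$) or a single point of multiplicity at least three (a triple point, contributing $\geq 2$), which are the cases recorded in (iv).

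I expect the only genuinely delicate point to be (i)$\Leftrightarrow$(ii): one must correctly read ``the polynomial we get'' as the Pl\"ucker/maximal-minor datum attached to $M$ and check that its \emph{identical} vanishing — not the vanishing of some single minor — is what corresponds to a drop in rank, and hence to a syzygy. Everything after that is elementary commutative algebra over $\mathbb{C}[x,y]$, the gcd factorization and the multiplicity bookkeeping via Euler's relation; the only care needed there is to fix conventions, namely to read ``two common roots counting multiplicity'' in (iii) as $\deg\gcd(F_x,F_y)\geq 2$, equivalently $\sum_p\bigl(m_p(F)-1\bigr)\geq 2$, with the two configurations in (iv) being the minimal ways this inequality is met.
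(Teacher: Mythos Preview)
Your chain of equivalences is correct, and the route you take is genuinely different from the paper's. For (ii)$\Leftrightarrow$(iii)$\Leftrightarrow$(iv) the paper does not use the $\gcd$ factorization or Euler's relation at all; instead it invokes the Koszul complex of $(F_x,F_y)$ on $\mathbb P^1$ and runs the hypercohomology spectral sequence of that complex, obtaining a six-term exact sequence
\[
0\to K_{k-(d-1)}\to \mathbb C^{|Z|}\to R_{e-k}^*\to R_k\to \mathbb C^{|Z|}\to K_{e-k-(d-1)}^*\to 0
\]
with $R_k=S^kV/J_{F,k}$, $e=2(d-2)$, $K_k=\ker\bigl(S^kV^{\oplus 2}\to S^{k+d-1}V\bigr)$, and $Z$ the singular scheme of $F$. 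Specializing to $k=0$ yields $K_{d-3}\neq 0\iff |Z|\geq 2$, which is the link between (ii) and (iii)/(iv). Your argument replaces this machinery by the explicit description of the syzygy module of two binary forms via the $\gcd$, together with the multiplicity count $\deg\gcd(F_x,F_y)=\sum_p\bigl(m_p(F)-1\bigr)$ from Euler's identity. This is shorter and, incidentally, treats the two minimal configurations (two nodes versus one triple point) on equal footing; the paper's spectral-sequence computation is carried out under the standing hypothesis that $Z$ is smooth, so the triple-point case is not addressed directly there. What the paper's approach buys is a template that transports to $m\geq 3$ variables (it is the same Koszul formalism used immediately afterwards for general $m$), whereas yours is the closed-form argument special to $m=2$. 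Your reading of (i)$\Leftrightarrow$(ii)---identical vanishing of the form $\iff$ vanishing of all maximal minors of $M$ $\iff$ rank drop $\iff$ nontrivial degree-$(d-3)$ syzygy---is exactly the intended one and matches the paper's construction of the $(2d-3)\times(2d-3)$ determinant.
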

\begin{proof}
We will use Koszul complexes, see \cite{G84}.

Suppose that the singular locus $Z$ of $F$ is a smooth subvariety of $F$ of dimension $0$. 

We have 
$$0\to \Oo_{\mathbb{P}^1}(-2(d-1))[Z]\to \oplus_2 \Oo_{\Pp^1}(-(d-1))\to \Oo_{\Pp^1}\to \Oo_Z\to 0 $$
If $F_1=\mu_1\cdot s_Z, F_2=\mu_2\cdot s_Z$ near $Z$, where $s_Z$ defines $Z$, then the relations among $F_1, F_2$ are $\mu_2\cdot F_1-\mu_1 F_2=0$, and this accounts for the $[Z]$ on the left.

Put another way, the Koszul complex 

\begin{align*}
0\to &\Oo_{\mathbb{P}^1}(-2(d-1))&&\hspace{-2em}\to \oplus_2 \Oo_{\mathbb{P}^1}(-(d-1)) &&&\hspace{-90em}\to \Oo_{\Pp^1}\to 0 \\
\text{has homology} & && &&& \\
 &\quad 0   &&\Oo_Z   &&& \hspace{-2em}\hspace{-2em}\hspace{-2em}  \Oo_Z 
\end{align*}
Let 
\begin{align*}
R_k=\frac{S^k V}{J_{F,k}}, \quad e=2(d-2)\\ 
K_k=\ker(\oplus_2 S^k V\to S^{k+d-1}V)
\end{align*}
The hypercohomology spectral sequence has $E_2$ terms 
\[\begin{array}{lll}
R_{e-k}^* & K_{e-r-(d-1)}^* & 0 \\
0 & K_{k-(d-1)} & R_k
\end{array}
\]
It abuts to 

\[ 
\begin{array}{lll}
0 & 0 & 0\\
0 & \C^{\abs{Z}} & \C^{\abs{Z}}
\end{array}
\]

Thus we get an exact sequence 
$$0 \to K_{k-(d-1)}\to \C^{\abs{Z}}\overset{e_Z^*}\to R_{e-k}^*\to R_k\overset{e_Z}\to \C^{\abs{Z}}\to K_{e-k-(d-1)}^*\to 0 $$
where the map 
$$R_k\overset{e_Z}\to \C^{\abs{Z}} \text{ is } H^0(\Oo_{\Pp^1}(k))\to H^0(\Oo_Z(k)). $$
For $k=0$, $e_Z$ has rank one if $Z\not =\varnothing$. So 
$$0\to K_{-(d-1)}\to \C^{\abs{Z}}\to R_e^*\to R_0\to \C^{\abs{Z}}\to K_{e-(d-1)}^*\to 0 $$
Thus 
\begin{align*}
\abs{Z}=1 \Rightarrow K_{e-(d-1)}=0, \text{ i.e. } K_{d-3}=0\\
\abs{Z}\geq 2 \Rightarrow K_{d-3}\not = 0
\end{align*}

Taking $k=-1$, we have
$$0=R_{-1}\to \C^{\abs{Z}}\to K_{e+1-(d-1)}^*=K_{d-2}\to 0 $$
So $Z\not =\varnothing \iff K_{d-2}\not = 0$.
\end{proof}

\end{exmp}

Once the number of variables $m\geq 3$, we have the following generalization.

\begin{prop} $J_{F,m(d-2)}^\bot$ is given by the condition that $\sum P_iF_i=0$, $\deg P_i=d-3$, has a solution not generated by Koszul relation, and this is a polynomial of degree $$\left(\begin{array}{l} m(d-2)+m-1\\
m-1
\end{array} \right)-1$$ in the coefficients of $F$.
\end{prop}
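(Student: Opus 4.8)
The plan is to prove the two halves of the statement separately: first that the generator of the one–dimensional space $J_{F,m(d-2)}^{\perp}$ is controlled by the syzygy condition, and then that, as a polynomial in the coefficients of $F$, it has the asserted degree. Throughout set $e=m(d-2)$ and $R_k=S^kV/J_{F,k}$. Since $F$ is smooth the $F_i$ form a regular sequence, so $R_\bullet$ is an Artinian Gorenstein ring with socle in degree $e$; hence $R_e\cong\C$ and $J_{F,e}^{\perp}\subset S^eV^\ast$ is spanned by a single form $\Phi$, the dual socle generator in the sense of Macaulay's inverse systems. The point is to study this $\Phi$ together with the polynomial in the coefficients $A$ of $F$ expressing it, defined uniformly over the whole family (including singular $F$) as the generator of the cokernel of the degree-$e$ multiplication map.

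For the first assertion I would run the hypercohomology argument of the preceding example, but on $\Pp^{m-1}=\Pp V$ rather than $\Pp^1$. Let $Z=\Sing\{F=0\}=V(F_1,\dots,F_m)$ and form the Koszul complex of the section $(F_1,\dots,F_m)$ of $\Oo(d-1)^{\oplus m}$, which resolves $\Oo_Z$. Twisting by $\Oo(e)$ and taking hypercohomology gives a spectral sequence whose $E_1$-page has only the rows $q=0$ and $q=m-1$, since the intermediate cohomology of line bundles on $\Pp^{m-1}$ vanishes. The $q=0$ row computes the Koszul homology, whose bottom term is $R_e$, while the $q=m-1$ row computes, via Serre duality, the dual syzygy modules in the complementary twists. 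The single connecting differential between the two rows produces a long exact sequence generalizing the one in the example, in which $\C^{|Z|}$ mediates between the $R_\bullet$-terms and the syzygy terms. Reading it off, $\Phi$ degenerates (equivalently $\dim R_e\ge 2$) precisely when the relevant syzygy module is nonzero, i.e.\ when $\sum P_iF_i=0$ has a nontrivial solution with $\deg P_i=d-3$. Because the Koszul relations $F_je_i-F_ie_j$ only occur once $\deg P_i\ge d-1>d-3$, any such solution is automatically not generated by Koszul relations, which both explains and forces the qualifier in the bookkeeping.

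For the degree assertion I would realize $\Phi$ directly as the left kernel of the multiplication map
\[
\mu_e\colon \bigoplus_{i=1}^{m} S^{e-(d-1)}V \xrightarrow{\,(F_1,\dots,F_m)\,} S^eV ,
\]
whose entries are linear in the coefficients $A$ of $F$. Indeed the left kernel of $\mu_e$ is exactly $(\operatorname{Im}\mu_e)^{\perp}=J_{F,e}^{\perp}=R_e^\ast$, which is one–dimensional, so it is spanned by $\Phi$ regardless of $m$. Writing $N=\dim S^eV=\binom{m(d-2)+m-1}{m-1}$, the map $\mu_e$ has generic rank $N-1$; choosing any $N-1$ columns of maximal rank and taking the signed $(N-1)\times(N-1)$ minors of the resulting $N\times(N-1)$ block produces a vector in the left kernel, hence a scalar multiple of $\Phi$, whose components are homogeneous of degree $N-1=\binom{m(d-2)+m-1}{m-1}-1$ in $A$. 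When $m=2$ the map $\mu_e$ is injective and this is precisely the minor computation of the example; the left–kernel description makes the argument uniform in $m$, so one never has to pass to the full Koszul complex for the degree count.

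The main obstacle is twofold, and both halves appear only for $m\ge 3$. On the degree side, the maximal minors just described could a priori share a common factor, which would lower the true degree of $\Phi$; the honest claim is that this common factor is trivial, and establishing this is the delicate point, where I expect a genericity or irreducibility argument for the universal $F$ (computing the codimension of the rank-drop locus of $\mu_e$) to be needed. On the characterization side, the $m=2$ case used only a two–term complex, so a single connecting map sufficed; for $m\ge 3$ the spectral sequence has $m+1$ columns and both rows must be tracked carefully to confirm that the \emph{first} syzygy module obstructing the socle is the degree $d-3$ one and to justify excluding the Koszul relations in the resulting exact sequence. Once these two points are settled, combining the characterization of the vanishing of $\Phi$ with the degree count of its minors yields the proposition.
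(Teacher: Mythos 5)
The paper offers no proof of this proposition: it is asserted as the generalization of the fully worked $m=2$ example (whose proof is the Koszul--complex/hypercohomology computation) and is immediately followed by the explicit $m=5$, $d=3$ matrix $M_F$ that realizes the degree count. So your two-part strategy --- Gorenstein/socle duality for the one-dimensionality of $J_{F,m(d-2)}^{\perp}$, the two-row hypercohomology spectral sequence of the Koszul complex of $(F_1,\dots,F_m)$ on $\Pp^{m-1}$ for the syzygy characterization, and signed maximal minors of the multiplication map $\mu_e$ for the degree $\binom{m(d-2)+m-1}{m-1}-1$ --- is exactly the intended one, and your left-kernel formulation coincides with the coefficients of the paper's determinant $M_F$ after expansion along its first row. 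The possible common factor of the minors is a genuine gap that you correctly flag and that the paper does not address either.

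There is, however, one point where your argument as written fails to connect to the statement. The graded piece of the syzygy module that controls $J_{F,e}^{\perp}$ with $e=m(d-2)$ is the one in degree $e-(d-1)=m(d-2)-(d-1)$, since $J_{F,e}$ is spanned by the products $z^JF_i$ with $\abs{J}=e-(d-1)$; this equals $d-3$ only when $m=2$. (The proposition's ``$\deg P_i=d-3$'' is carried over from the $m=2$ example; the paper's own $m=5$, $d=3$ example uses $\abs{J}=3=m(d-2)-(d-1)$, not $d-3=0$.) In that degree the Koszul syzygies genuinely exist for $m\geq 3$ --- the paper counts $50$ of them in the cubic-threefold case, and it is precisely after removing them that the row count $175-50=125=N-1$ comes out right. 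So your claim that any solution in the relevant degree is ``automatically not generated by Koszul relations,'' and hence that the qualifier is vacuous, is incorrect for the true generalization: the qualifier is what makes the expected dimension of the span of the products equal to $N-1$ and hence the square matrix construction possible. Your spectral-sequence machine is the right one, but it must be aimed at the syzygies of degree $e-(d-1)$ modulo the Koszul submodule; with that correction, the remainder of your outline, including the two delicate points you identify, is the right plan.
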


\begin{exmp} Let us return to our main case when $m=5$ and $d=3$, so $m(d-2)=5$. There are ${5 \choose 2} 5 =50$ Koszul relations. Hence, the space of possible polynomials $P_i$ is $5 {7 \choose 3} -30=125$. Now, there are ${9\choose 4}=126$ monomials of degree $5$. Hence, we get a $126\times 126$ matrix whose first row is given by the monomials of degree $5$ in $z_1,\ldots, z_5$ and the others are given with by $z^J F_i$ with $\abs{J}=3$.
$$
\left( \begin{matrix}
 z_1^{*5} & \cdots & z_5^{*5} \\
  & z^J F_i & \\
  & & \ddots
\end{matrix}  \right)
$$

 The determinant of this matrix $M_F$ is the polynomial in $S^5 V^*$ that we search. Let $Q=\sum w_{ij}z_iz_j$ then $$P\in C^\perp \iff QP\in J_{F,5}\iff \left\langle QP, M_F \right\rangle=0 \iff \left\langle P,\sum w_{ij}\frac{\partial^2}{\partial z_i^* \partial z_j^*} M_F \right\rangle=0 $$

Hence $C=\sum w_{ij} \frac{\partial^2}{\partial z_i^* \partial z_j^*} M_F$. This is linear in the coefficients of $Q$, and of degree $125$ in the coefficients of $F$.
\end{exmp}

\subsection{Associated quintic}
\begin{prop}
If $C$ is smooth then $\frac{S^5 V^*}{J_C}\cong \mathbb{C}$. This isomorphism implies that $J_{C,5}^\perp=\{P\in S^5 V \mid d^3 P \subset (J_{F,4:Q})\}$ has dimension one.
\end{prop}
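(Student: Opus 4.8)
The plan is to treat the two assertions in turn. For the first, observe that since $C$ is smooth the hypersurface $\{C=0\}\subset \mathbb{P}(V)\cong \mathbb{P}^4$ is a smooth cubic threefold, so its Jacobian (Milnor) algebra $S^\bullet V^*/J_C$ is Artinian Gorenstein with one-dimensional socle concentrated in degree $(4+1)(3-2)=5$. This is exactly the structural fact already recorded for the smooth cubic $F$, where $H^{3,3}(X)\cong S^5 V/J_{F,5}\cong \mathbb{C}$; applied verbatim to the smooth cubic $C$ in the dual variables it yields $S^5 V^*/J_C\cong \mathbb{C}$. (Lemma \ref{lem:CubicAreSmooth}, which already gives $S^6 V^*/J_C=0$, confirms that $5$ is the top nonvanishing degree.)

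For the second assertion, the isomorphism $\cong \mathbb{C}$ immediately forces $J_{C,5}$ to have codimension one in $S^5 V^*$, so under the perfect pairing $\langle\,,\,\rangle\colon S^5 V^*\otimes S^5 V\to\mathbb{C}$ the space $J_{C,5}^\perp$ is one-dimensional. To obtain the stated description I would unwind this perp using apolarity. Since $J_C$ is generated by the first partials $C_i:=\partial C/\partial z_i^*\in S^2 V^*$, its degree-$5$ piece is $J_{C,5}=S^3 V^*\cdot J_{C,2}$, i.e. the span of the products $A\,C_i$ with $A\in S^3 V^*$. Thus for $P\in S^5 V$ we have $P\in J_{C,5}^\perp$ if and only if $\langle A\,C_i,P\rangle=0$ for all $i$ and all $A\in S^3 V^*$.

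Next I would apply the adjointness identity already used in the paper, namely that multiplication by $z_j^*$ on $S^\bullet V^*$ is adjoint to $\partial/\partial z_j$ on $S^\bullet V$ (the degree-$2$ instance $\langle \partial C/\partial z_i^*,P\rangle=\langle C,z_iP\rangle$ is precisely the one used in Proposition \ref{prop:Jf4:Q}). Iterating it three times moves the degree-$3$ factor $A$ across the pairing and converts it into the constant-coefficient operator $A(\partial)$, giving $\langle A\,C_i,P\rangle=\langle C_i,A(\partial)P\rangle$, where $A(\partial)P\in S^2 V$ is the corresponding third derivative of $P$. As $A$ ranges over $S^3 V^*$, the elements $A(\partial)P$ span exactly the space $d^3 P$ of all third partials of $P$. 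Hence $P\in J_{C,5}^\perp$ if and only if every third partial of $P$ is annihilated by all the $C_i$, i.e. lies in $\{C_1,\dots,C_5\}^\perp=J_{C,2}^\perp$. By Proposition \ref{prop:Jf4:Q} this perp equals $(J_{F,4}:Q)$, so the condition becomes $d^3 P\subset (J_{F,4}:Q)$, and combined with the codimension count it exhibits a one-dimensional space, as claimed.

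The routine part is the bookkeeping; the main point to get right is the apolarity adjointness together with the identification of $\{A(\partial)P : A\in S^3 V^*\}$ with the full span of third partials of $P$, and the fact that $J_C$ is genuinely generated in degree $2$ so that $J_{C,5}=S^3 V^*\cdot J_{C,2}$. These are standard but must be tracked carefully so that the perp is taken in the correct degree and with respect to the correct pairing.
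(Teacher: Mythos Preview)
Your proof is correct. The paper states this proposition without proof, and your argument supplies exactly the justification one would expect from the tools already assembled there: the socle computation for the Jacobian ring of a smooth cubic in $\mathbb{P}^4$ gives the first claim, and the second follows by the same apolarity adjointness used in Proposition~\ref{prop:Jf4:Q} together with that proposition's identification $J_{C,2}^\perp=(J_{F,4}:Q)$.
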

This assigns a quintic in $S^5 V$ to the pair $(X,Y)$. We will not study this quintic further here, but we expect it to have interesting geometry in the setting of the pairs $(X,Y)$ as above.

\subsection{Delta invariant for X fixed}
The purpose of this subsection is to prove Theorem \ref{thm:NonVanDeltaXfixed}. It will say that the infinitesimal invariant $\delta e_{Y/X}$ is non-zero when we held $X$ constant. In particular it will imply that the extension class $e_{Y/X}$ is non-zero. 

In the proof we will use the description of the geometric maps via different subquotients of $S^k V$. For this we will first prove some formulas that will be needed.
\subsubsection{Tangent space to deformations of Y } Recall that, following \cite{B04}, we denote $R=\Ima\{\Pic(X)\hookrightarrow \Pic(Y)\}$ and 
$$H^{1,1}(Y)_{\NEW}=\frac{H^{1,1}(Y)}{H^{1,1}(X)}\cong \frac{\Pic(Y)_\mathbb{C}}{\Pic(X)_\mathbb{C}}=R_\mathbb{C}^\perp. $$

\begin{prop}\label{prop:TDefY} We have an isomorphism 
$$H^{1,1}(Y)_{\NEW}\cong \frac{H^0(\Oo_Y(2))\oplus H^0(\Oo_Y(3))}{\{\sum A_i Q_i\oplus \sum A_i F_i\}},$$
where the $A_i$ are the same on both terms.
\end{prop}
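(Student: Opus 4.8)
The plan is to compute $H^{1,1}(Y)_{\NEW}$ by realizing it as a cohomology group associated to the complete intersection $Y = \{F = Q = 0\} \subset \mathbb{P}V^* \cong \mathbb{P}^4$ and then applying the Griffiths--Carlson residue description of the Hodge filtration for complete intersections. Since $Y$ is a smooth $K3$ surface arising as a $(2,3)$-complete intersection threefold section, the primitive part of $H^{1,1}(Y)$ — which is precisely $H^{1,1}(Y)_{\NEW} = H^{1,1}(Y)/H^{1,1}(X)$ after quotienting by the classes pulled back from the ambient Fano $X$ — should be expressible via a Jacobian-ring-type quotient for the pair $(F,Q)$. First I would set up the Koszul/Jacobian ring for the complete intersection: the relevant object is the graded ring $S^\bullet V$ modulo the ideal generated by the partials $F_i = \partial F/\partial z_i$ and $Q_i = \partial Q/\partial z_i$, organized so that the degree-shift matches the adjunction computation $K_Y \cong \Oo_Y(2+3-5) = \Oo_Y$.

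The key structural input is the exact sequence governing how differentials on the ambient space restrict to $Y$. I would use the fact that $H^{1,1}(Y)_{\NEW} \cong R_\mathbb{C}^\perp$, the orthogonal complement of the Picard classes inherited from $X$, and identify this with a residue space. Concretely, the numerator $H^0(\Oo_Y(2)) \oplus H^0(\Oo_Y(3))$ arises because the two defining equations have degrees $2$ and $3$, so a deformation of $Y$ as a complete intersection is given by a pair of polynomials $(\delta Q, \delta F)$ of those degrees; the resulting middle-dimensional Hodge class is the one $Y$ carries in $H^{1,1}$ that does not come from $X$. The relations $\{\sum A_i Q_i \oplus \sum A_i F_i\}$ — with the \emph{same} $A_i$ appearing in both slots — encode exactly the infinitesimal automorphisms and trivial deformations: a vector field $\sum A_i \partial/\partial z_i$ on $\mathbb{P}^4$ moves both $F$ and $Q$ simultaneously by Lie derivative, producing $(\sum A_i Q_i, \sum A_i F_i)$, and such deformations do not change the isomorphism class of the pair, hence must be quotiented out.

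The cleanest route to the formula is via the normal bundle sequence for $Y \subset \mathbb{P}^4$, where $N_{Y/\mathbb{P}^4} \cong \Oo_Y(2) \oplus \Oo_Y(3)$, combined with the earlier identifications in the paper. I would relate $H^{1,1}(Y)_{\NEW}$ to $H^1(T_Y)$-type data already appearing in diagram~(\ref{eq:ComDiam}) and the exact sequences~(\ref{eq:BES1})--(\ref{eq:BES4}), then pull back the ambient structure. The span of $\{\sum A_i Q_i \oplus \sum A_i F_i\}$ is the image of $H^0(T_{\mathbb{P}^4})$ (the ambient vector fields, spanned by the $A_i = z_i$-type linear combinations together with the Euler relations) acting on the pair $(Q,F)$, and modding out by it yields precisely the deformations of $Y$ that change its position transverse to the $X$-directions, i.e. the new Picard/Hodge classes.

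The main obstacle I anticipate is bookkeeping the precise graded pieces and confirming that the quotient lands in the correct cohomological degree with no spurious kernel or cokernel — in particular verifying that taking the \emph{same} $A_i$ in both summands (rather than independent ones) is forced, and that the Koszul relations among $F_i, Q_i$ do not introduce extra identifications beyond those recorded. This requires care with the weak Lefschetz theorem (giving the injectivity $H^2(X) \hookrightarrow H^2(Y)$ from~(\ref{eq:NumIn})(ii)) to pin down exactly which classes are killed when passing to $H^{1,1}(Y)_{\NEW}$, and with the self-duality/residue pairing to ensure the identification $H^{1,1}(Y)_{\NEW} \cong R_\mathbb{C}^\perp$ is an isomorphism of the correct Hodge-theoretic weight rather than merely a dimension count.
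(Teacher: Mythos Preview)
Your ``cleanest route'' paragraph is essentially the paper's approach: combine the normal bundle sequence
\[
0 \to T_Y \to T_{\mathbb{P}^4}|_Y \to N_{Y/\mathbb{P}^4} \cong \Oo_Y(2)\oplus\Oo_Y(3) \to 0
\]
with the restricted Euler sequence $0 \to \Oo_Y \to \oplus_5 \Oo_Y(1) \to T_{\mathbb{P}^4}|_Y \to 0$, and chase cohomology. The Griffiths--Carlson residue discussion and the appeals to diagram~(\ref{eq:ComDiam}) and sequences~(\ref{eq:BES1})--(\ref{eq:BES4}) are detours; none of that machinery is needed here.

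There is, however, a genuine gap in your plan: you never pin down \emph{why} the quotient lands on $H^{1,1}(Y)_{\NEW}$ rather than on all of $H^1(T_Y)$. The missing ingredients are (i) the K3 isomorphism $T_Y \cong \Omega_Y^1$ (via the holomorphic $2$-form), which identifies $H^1(T_Y) \cong H^{1,1}(Y)$, and (ii) the computation $H^1(T_{\mathbb{P}^4}|_Y) \cong H^2(\Oo_Y) \cong \C$, obtained from the Euler sequence together with the vanishings $H^1(\Oo_Y)=H^1(\Oo_Y(1))=H^2(\Oo_Y(1))=0$. Point (ii) tells you that the image of $H^0(N_{Y/\mathbb{P}^4})$ in $H^1(T_Y)$ has codimension exactly one, and under (i) the one-dimensional cokernel is spanned by the hyperplane class --- precisely what is killed when passing from $H^{1,1}(Y)$ to $H^{1,1}(Y)_{\NEW}$ (here $r=1$). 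Your heuristic ``deformations transverse to the $X$-directions give new Hodge classes'' gestures at this but does not substitute for the actual cohomology computation. Once you insert these two steps, the argument is complete and coincides with the paper's.
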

\begin{proof}
We have a diagram 
\[\begin{tikzcd} & & 0 \ar[d] & & \\
& &  \Oo_Y \ar[d] & & \\
& & \oplus_5 \Oo_Y(1) \ar[d] & & \\
0 \ar[r] & T_Y \ar[r] & T_{\mathbb{P}^4}|_Y \ar[r] \ar[d] & N_{Y/\mathbb{P}^4} \ar[r] & 0 \\
& & 0  & & 
\end{tikzcd}
\]
and an isomorphism $N_{Y/\mathbb{P}^4}\cong \Oo_Y(2)\oplus \Oo_Y(3)$.
Now
$$\oplus_5 H^0(\Oo_Y(1))\to H^0(T_{\mathbb{P}^4}|_Y)\to H^1(\Oo_Y)=0 $$
$$0=\oplus_5 H^1(\Oo_Y(1))\to H^1(T_{\mathbb{P}^4}|_Y)\to H^2(\Oo_Y)\to \oplus H^2(\Oo_Y(1))=0 $$
and $H^2(\Oo_Y)\cong \C$. So
$$ \frac{H^0(\Oo_Y(2)\oplus H^0(\Oo_Y(3))}{\Ima(\oplus_5 H^0(\Oo_Y(1)))}\cong H^{1,1}(Y)_{\NEW} $$
i.e. 
$$H^{1,1}(Y)_{\NEW}\cong \frac{H^0(\Oo_Y(2))\oplus H^0(\Oo_Y(3))}{\{\sum A_i Q_i\oplus \sum A_i F_i\}}.$$
\end{proof}

Let us denote by $dQ\wedge dF$ the ideal in $\oplus_k H^0(\Oo_Y(k))$ generated by the $Q_iF_j-Q_jF_i$ with $Q_i$, respectively $F_i$, the $i$-th partial derivative of $Q$, respectively $F$. Similarly, denote by $\widetilde{dQ\wedge dF}$ the ideal in $\oplus_i H^0(\Oo_{\mathbb{P}^4}(k))$ generated by $dQ\wedge dF, Q$ and $F$.

\begin{prop} We have an isomorphism
$$ H^{1,1}(Y)_{\NEW}\cong \frac{S^3 V}{\widetilde{dQ\wedge dF}}.$$
\end{prop}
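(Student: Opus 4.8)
The plan is to rewrite the presentation of Proposition \ref{prop:TDefY} as a quotient of a single graded piece and then collapse the degree-two summand. First I would lift the two summands to $S^2V$ and $S^3V$. Since $Y=\{F=Q=0\}$ is a smooth complete intersection, the ideal-sheaf sequence together with the vanishing of $H^i(\Oo_{\Pp^4}(j))$ for $0<i<4$ gives
$$H^0(\Oo_Y(2))\cong \frac{S^2V}{\C\, Q},\qquad H^0(\Oo_Y(3))\cong \frac{S^3V}{\C\, F+Q\cdot V}.$$
Feeding this into Proposition \ref{prop:TDefY} presents $H^{1,1}(Y)_{\NEW}$ as the quotient of $S^2V\oplus S^3V$ by the subspace $N$ spanned by $(Q,0)$, by $(0,F)$, by $(0,QL)$ with $L\in V$, and by $\big(\sum_i A_iQ_i,\ \sum_i A_iF_i\big)$ with $(A_1,\dots,A_5)\in V^{\oplus 5}$.

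Next I would introduce $\iota:S^3V\to H^{1,1}(Y)_{\NEW}$, $D\mapsto [(0,D)]$, and prove it is an isomorphism. For surjectivity, note that because $Q$ is smooth its partials $Q_1,\dots,Q_5$ form a basis of $V$, so the products $Q_iL$ with $L\in V$ already span all of $S^2V$; hence given any class $[(B,D)]$ one can write $B=\sum_iA_iQ_i$ and use the relation $\big(\sum_iA_iQ_i,\sum_iA_iF_i\big)\in N$ to replace it by $[(0,D-\sum_iA_iF_i)]$, which lies in the image of $\iota$.

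For injectivity I would compute $\ker\iota=\{D\in S^3V:(0,D)\in N\}$. Unwinding membership in $N$, this says $D=bF+QL+\sum_iA_iF_i$ for some $b\in\C$, $L\in V$ and some $A\in V^{\oplus5}$ whose partial-combination $\sum_iA_iQ_i$ is a scalar multiple of $Q$. The hard part is analysing this constraint on $A$. When $\sum_iA_iQ_i=0$ with the $A_i$ linear, the first syzygies of the regular sequence $(Q_1,\dots,Q_5)$ are exactly the Koszul ones, so $A$ is a combination of the elementary syzygies carrying $Q_j$ in slot $i$ and $-Q_i$ in slot $j$, whence $\sum_iA_iF_i\in\Span\{Q_iF_j-Q_jF_i\}$. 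The inhomogeneous case $\sum_iA_iQ_i=\lambda Q$ reduces to this one after subtracting $\tfrac{\lambda}{2}(z_1,\dots,z_5)$, since the Euler identities $\sum_iz_iQ_i=2Q$ and $\sum_iz_iF_i=3F$ convert that correction into a multiple of $F$. Collecting terms yields
$$\ker\iota=\C\,F+Q\cdot V+\Span\{Q_iF_j-Q_jF_i\},$$
which is exactly the degree-three part of $\widetilde{dQ\wedge dF}$, giving the isomorphism. I expect this syzygy identification — the Koszul step together with the Euler correction — to be the only substantive point, the rest being bookkeeping.
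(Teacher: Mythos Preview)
The paper states this proposition without proof, so there is nothing to compare against; your argument is supplying the omitted details. The lifting from $H^0(\Oo_Y(k))$ to $S^kV$, the Euler trick reducing $\sum A_iQ_i=\lambda Q$ to the homogeneous case, and the identification of linear syzygies with Koszul syzygies are all carried out correctly, and together they give the claimed isomorphism.

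There is one unjustified hypothesis. You write ``because $Q$ is smooth its partials $Q_1,\dots,Q_5$ form a basis of $V$,'' and you use this twice: for surjectivity of $\iota$ (so that any $B\in S^2V$ can be written $\sum A_iQ_i$) and for the syzygy step (so that $(Q_1,\dots,Q_5)$ is a regular sequence and every linear syzygy is Koszul). But smoothness of $Y=\{F=Q=0\}$ does not force $Q$ to be non-degenerate. If $\rank Q=4$, the quadric $\{Q=0\}$ is a cone, and a generic cubic $X$ missing the vertex cuts out a smooth $Y$; this case is allowed by the paper's hypotheses. With, say, $Q_5=0$, your surjectivity argument breaks because $\sum_{i}A_iQ_i$ has $z_5$-degree at most one, so the class of $(z_5^2,0)$ is not in the image of $\iota$; and your injectivity argument breaks because $(0,\dots,0,A_5)$ is a non-Koszul linear syzygy for every $A_5\in V$, which forces $z_5F_5$ into $\ker\iota$ even though it need not lie in $\widetilde{dQ\wedge dF}_3$. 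So as written your proof establishes the isomorphism only under the extra assumption that $Q$ has full rank. This suffices for the paper's later use, which concerns generic $(X,Y)$, but you should either add that hypothesis explicitly or supply a separate argument for the rank-$4$ case.
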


\subsubsection{Tangent space to deformation space of pairs} \begin{prop}\label{prop:TdefPairs} We have an isomorphism
$$ T\Def(X,Y)\cong \frac{S^3 V}{(dQ\wedge dF, F)} $$
\end{prop}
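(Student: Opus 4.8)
The plan is to identify $T\Def(X,Y)$ with a quotient of $S^3 V$ by computing the cohomology group $H^1(T_X(-\log Y))$ through a resolution built from the defining equations $F,Q$. First I would recall from Proposition \ref{prop:TdefPairs}'s hypotheses that $\Def(X,Y)$ is smooth with tangent space $H^1(T_X(-\log Y))$, and that from the basic exact sequence (\ref{eq:BES3}) one has $0\to T_X(-\log Y)\to T_X\to N_{Y/X}\to 0$, so that deformations of the pair correspond to deformations of $X$ carrying $Y$ along. The strategy mirrors the proof of Proposition \ref{prop:TDefY}: express the relevant tangent space via explicit polynomial data on $\mathbb{P}^4$ using the Euler sequence and the Koszul-type resolution of the complete intersection $Y=\{F=Q=0\}$.

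Concretely, I would proceed by computing $H^1(T_X(-\log Y))$ through $T_{\mathbb{P}^4}(-\log Y)$ or directly via the normal-bundle description. Since $X=\{F=0\}$ is a cubic and $Y=\{F=Q=0\}$ a complete intersection of the cubic and a quadric, the deformations of the pair $(X,Y)$ that move $X$ while dragging $Y$ are governed by the pairs $(\delta F,\delta Q)$ of polynomials with $\deg\delta F=3$, $\deg\delta Q=2$, modulo the reparametrizations coming from the linear vector fields on $\mathbb{P}^4$ (the Euler/Jacobian relations). The key algebraic point is that a deformation of the pair is determined by $\delta F\in S^3 V$ together with the induced motion of $Y$, but the quadric perturbation $\delta Q$ is \emph{not} an independent parameter once $X$ is allowed to deform: the relevant quotient collapses to a quotient of $S^3 V$ alone. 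The relations to quotient by are exactly those generated by $dQ\wedge dF$ (the syzygies $Q_i F_j - Q_j F_i$ forcing a change of $F$ that can be absorbed into a change of $Q$ and coordinates) together with $F$ itself (since multiples of $F$ give no new deformation of $X$).

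The cleanest route is to combine Proposition \ref{prop:TDefY}, which already gives $H^{1,1}(Y)_{\NEW}\cong S^3 V/\widetilde{dQ\wedge dF}$ with $\widetilde{dQ\wedge dF}=(dQ\wedge dF, Q, F)$, with the tangent sequence (\ref{eq:TangentFiber}) and the deformation diagram (\ref{eq:DefSq1}). I would fit $T\Def(X,Y)$ into the exact sequence coming from (\ref{eq:BES4}), namely $0\to H^1(\Omega_X^2)\to T_{\Fg}\to H^1(T_Y)_R\to 0$, and track the polynomial descriptions: $H^1(\Omega_X^2)=H^{2,1}(X)=V$ contributes the deformations keeping $Y$ fixed, while $H^1(T_Y)_R$ matches $H^{1,1}(Y)_{\NEW}$. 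Passing from $\widetilde{dQ\wedge dF}$ to $(dQ\wedge dF, F)$ amounts to no longer quotienting by $Q$, which is precisely the extra freedom of moving $Y$ inside $|-K_X|$ in a way that is recorded by the pair rather than by $Y$ alone. The main obstacle will be bookkeeping the passage between the three tangent spaces $T\Def(X,Y)$, $H^{1,1}(Y)_{\NEW}$, and $H^1(\Omega_X^2)$ consistently, i.e. verifying that dropping exactly the generator $Q$ (but retaining $F$) from the ideal correctly accounts for the additional $V\cong S^2 V/J_{F,2}$-worth of directions, and confirming that the Euler relations and Jacobian syzygies have been counted once and only once. I expect this dimension-and-relation matching to be the delicate step, whereas the underlying cohomology vanishings needed to run the resolution follow from (\ref{eq:VanNor}), (\ref{eq:AN}), (\ref{eq:Fano}) and the Akizuki--Nakano argument already invoked in diagram (\ref{eq:ComDiam}).
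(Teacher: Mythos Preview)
Your proposal sketches two routes but commits to neither with enough precision. The paper's actual proof takes your heuristic ``pairs $(\delta F,\delta Q)$ modulo linear reparametrizations'' and makes it rigorous via the \emph{prolongation bundle} $\Sigma_Z$ (the Atiyah extension $0\to\Oo_Z\to\Sigma_Z\to T_Z\to 0$ with $\Sigma_{\mathbb P^4}\cong\oplus_5\Oo_{\mathbb P^4}(1)$): one identifies $T_X(-\log Y)$ with the kernel $E$ in a diagram comparing $\Sigma_{\mathbb P^4}|_X$ to $\Oo_X(2)\oplus\Oo_X(3)$, and then $H^1(E)$ is read off directly as $\bigl(S^2V\oplus S^3V/F\bigr)\big/\{(\sum A_iQ_i,\sum A_iF_i):A_i\in V\}$, which collapses to $S^3V/(dQ\wedge dF,F)$ after eliminating the $S^2V$ summand. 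You never invoke $\Sigma$, and without it (or an equivalent resolution) the passage from the heuristic $(\delta F,\delta Q)$ picture to an honest computation of $H^1(T_X(-\log Y))$ is not justified.

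Your ``cleanest route'' via the sequence $0\to V\to T\Def(X,Y)\to H^{1,1}(Y)_R\to 0$ together with Proposition~\ref{prop:TDefY} has a genuine gap: knowing the two ends of a short exact sequence does not determine the middle term, so you must \emph{construct} a map $S^3V\to T\Def(X,Y)$ (or its inverse) and check compatibility---which is precisely what the prolongation-bundle argument supplies. There is also a conceptual slip: the kernel $V=H^1(\Omega_X^2)=H^1(T_X(-Y))$ consists of deformations of $X$ keeping $Y$ fixed as a subvariety (realized by $F\mapsto F+tLQ$ for $L\in V$), \emph{not} ``moving $Y$ inside $\lvert{-K_X}\rvert$''; the latter is $H^0(N_{Y/X})$ and sits in the vertical, not horizontal, direction of diagram~(\ref{eq:ComDiam}). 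Finally, the assertion $V\cong S^2V/J_{F,2}$ is wrong on dimensions ($5$ versus $10$); the extra directions you gain by dropping $Q$ from the ideal are $Q\cdot V\subset S^3V/(dQ\wedge dF,F)$, of dimension $5$.
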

\begin{proof}
Recall that for $Z\subset \mathbb{P}^n$ smooth, there is a prolongation bundle $\Sigma_Z$ with extension class $c_1(H)\in H^1(\Omega_Z^1)$:
$$0\to \Oo_Z \to \Sigma_Z \to T_Z\to 0.$$

We have that $\Sigma_{\mathbb{P}^4}\cong \oplus_5 \Oo_{\mathbb{P}^4}(1)$ and 
$$0\to \Sigma_Z\to \Sigma_{\mathbb{P}^n}|_Z\to N_{Z/\Pp^n}\to 0 .$$

In the following diagram, we are searching for an explicit description of the bundle $E$.
\begin{equation}\label{eq:CDTDef(X,Y)}
\begin{tikzcd}
0 \ar[d] & 0\ar[d] & \\
E \ar[d] \ar[r] &  \Sigma_Y \ar[d] \ar[r] & 0\\
\Sigma_{\mathbb{P}^4}|_X \ar[r] \ar[d] & \Sigma_{\mathbb{P}^4}|_Y \ar[d] \ar[r] & 0 \\ \Oo_X(2)\oplus \Oo_X(3)\ar[d] \ar[r] & N_{Y/\mathbb{P}^4} \ar[d] \ar[r] & 0 \\
0 & 0 & 
\end{tikzcd}
\end{equation}
Now we have 
$$0\to \Sigma_X\to \Sigma_{\mathbb{P}^4}|_X\to \Oo_X(3)\to 0,$$
so
$$0 \to E\to \Sigma_X \to \Oo_X(2)\to 0.$$
Now $\Sigma_X$ acts as section of $\Oo_X(2)$ by differentiation, 
$$\Sigma_X\otimes \Oo_X(2)\to \Oo_X(2) $$
and we are acting on $Q$, so 
\[\begin{tikzcd}
 & & 0\ar[d] & 0\ar[d] & \\
  & & \Oo_X\ar[d]\ar[r,"\cong"] & Q\cdot \Oo_X \ar[d] & \\
0\ar[r] & E\ar[r]\ar[d] & \Sigma_X \ar[r, "{Q,dQ}"] \ar[d]  & \Oo_X(2)\ar[r] \ar[d]  & 0 \\
0\ar[r] & T_X(-\log Y)\ar[r] & T_X \ar[r,"dQ"] &\Oo_Y(2)\ar[r] &0 
\end{tikzcd}
\]
We have then that $E\cong T_X(-\log Y)$. 
Thus, since $H^1(\Sigma_{\mathbb{P}^4|_X})=0$ and using the diagram (\ref{eq:CDTDef(X,Y)}),we conclude that 
$$H^1(T_X(-\log Y))\cong \frac{H^0(\Oo_X(2))\oplus H^0(\Oo_X(3))}{\oplus_5 H^0(\Oo_X(1))}\cong \frac{S^2 V\oplus S^3 V/F}{\{\sum A_i Q_i\oplus \sum A_i F_i\}}. $$
\end{proof}
Using the prolongation bundle, we can give another proof of Proposition \ref{prop:TDefY}.

\subsubsection{Map from normal bundle of Y in X to its deformations}  The map 
$$\alpha:H^0(N_{Y/X})\to H^{1,1}(Y)_{\NEW} $$
is given by 
\begin{align*}
H^0(\Oo_Y(2))&\to \frac{H^0(\Oo_Y(2))\oplus H^0(\Oo_Y(3))}{\{\sum A_i Q_i \oplus \sum A_i F_i\}}. \\
P &\mapsto P\oplus 0
\end{align*}
Now $P=\sum A_i Q_i$ for some $A_i$, and $$P\mapsto -\sum A_i F_i \in \frac{S^3 V}{\widetilde{dQ\wedge dF}}. $$
\begin{lem}In particular
\begin{align*}
&\Ima(\alpha)\cong \frac{J_{F,3}}{J_{F,3}\cap (\widetilde{dQ\wedge dF})},\\
&\coker(\alpha)\cong \frac{S^3 V}{(J_{F,3},Q)}\cong \frac{H^{1,1}(Y)_{\NEW}}{\Ima H^0(N_{Y/X})}.
\end{align*}
\end{lem}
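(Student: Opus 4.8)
The plan is to push the entire computation through the two presentations of $H^{1,1}(Y)_{\NEW}$ that are already available: the one of Proposition \ref{prop:TDefY} as a quotient of $H^0(\Oo_Y(2))\oplus H^0(\Oo_Y(3))$, and the one of the following proposition as $S^3V/\widetilde{dQ\wedge dF}$. Under the latter identification the map $\alpha$ is exactly the one made explicit just above the statement: for $P\in H^0(N_{Y/X})=H^0(\Oo_Y(2))$ one writes $P=\sum A_iQ_i$ and then $\alpha(P)=-\sum A_iF_i$ modulo $\widetilde{dQ\wedge dF}$. So I would first confirm that this formula is well defined, and then simply read off its image and cokernel.

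For the image, note that as the tuple $(A_i)$ ranges over $V^{\oplus 5}$ the element $\sum A_iF_i$ ranges over $V\cdot\langle F_1,\dots,F_5\rangle=J_{F,3}$. Moreover every class in $H^0(\Oo_Y(2))=S^2V/\langle Q\rangle$ is represented by some $\sum A_iQ_i$: since the partials $Q_1,\dots,Q_5$ span $V$ one has $V\cdot\langle Q_1,\dots,Q_5\rangle=S^2V$, so $\alpha$ surjects onto the image of $J_{F,3}$ in $S^3V/\widetilde{dQ\wedge dF}$. Writing $(\widetilde{dQ\wedge dF})_3$ for the degree-three part, this gives $\Ima(\alpha)=\frac{J_{F,3}+(\widetilde{dQ\wedge dF})_3}{(\widetilde{dQ\wedge dF})_3}$, which by the second isomorphism theorem is $\frac{J_{F,3}}{J_{F,3}\cap(\widetilde{dQ\wedge dF})}$, the first assertion.

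The point that genuinely needs checking, and which I expect to be the main obstacle, is the well-definedness of $\alpha(P)=-\sum A_iF_i$, i.e. its independence of the presentation $P=\sum A_iQ_i$. Two such presentations differ by a syzygy $(A_i-A_i')$ among the $Q_i$; since $Q_1,\dots,Q_5$ are linearly independent linear forms they form a regular sequence, so every syzygy is Koszul, i.e. a combination of the $Q_je_i-Q_ie_j$. Its image $\sum(A_i-A_i')F_i$ is then a combination of the $Q_jF_i-Q_iF_j\in dQ\wedge dF\subset\widetilde{dQ\wedge dF}$, hence vanishes in the quotient. This regular-sequence input, which is where nondegeneracy of $Q$ (general position of $Y$) enters, is the only nonformal step; the rest is bookkeeping.

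Finally, for the cokernel I would compute $\coker(\alpha)=\frac{S^3V}{J_{F,3}+(\widetilde{dQ\wedge dF})_3}$ and simplify the denominator by observing that the extra generators of $\widetilde{dQ\wedge dF}$ already lie in $J_{F,3}$: by Euler's relation $F=\tfrac13\sum z_iF_i\in J_{F,3}$, and each $Q_iF_j-Q_jF_i$ lies in $V\cdot\langle F_1,\dots,F_5\rangle=J_{F,3}$. Hence $J_{F,3}+(\widetilde{dQ\wedge dF})_3=J_{F,3}+Q\cdot V=(J_{F,3},Q)$ in degree three, so $\coker(\alpha)\cong\frac{S^3V}{(J_{F,3},Q)}$, and the last identification with $H^{1,1}(Y)_{\NEW}/\Ima H^0(N_{Y/X})$ is just the definition of the cokernel.
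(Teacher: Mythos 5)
Your argument is correct and is precisely the computation the paper leaves implicit: the lemma is stated there as an immediate consequence of the explicit formula $\alpha(P)=-\sum A_iF_i$ in $S^3V/\widetilde{dQ\wedge dF}$, and you verify that formula (including the Koszul-syzygy well-definedness, which is indeed where nondegeneracy of $Q$ enters) and then read off the image and cokernel exactly as intended. The only step you leave tacit is independence of the choice of lift $P\in S^2V$ of a class in $H^0(\Oo_Y(2))\cong S^2V/\C Q$, but this follows at once from Euler's relation $\sum z_iQ_i=2Q$ together with $F\in\widetilde{dQ\wedge dF}$, in the same way as your cokernel computation.
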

Now
$$P\in \ker(\alpha)\iff -\sum A_i F_i \in (dQ\wedge dF, F,Q),$$
we can choose the $A_i$ in this case so that $\sum A_i F_i\in (Q)$, i.e. $\sum A_i F_i\in  J_{F,3}\cap (Q)$. 
\begin{lem} Thus
$$\ker \alpha\cong J_{F,3}\cap Q \cong (J_{F,5}:Q) $$
\end{lem}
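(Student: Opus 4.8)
The plan is to identify $\ker\alpha$ explicitly using the description of $\alpha$ given just above the statement, and then to match it with the ideal quotient $(J_{F,5}:Q)$ via the multiplication-by-$Q$ map. From the formula for $\alpha$, an element $P=\sum A_i Q_i \in H^0(\Oo_Y(2))$ lies in $\ker\alpha$ precisely when $-\sum A_i F_i \in \widetilde{dQ\wedge dF} = (dQ\wedge dF, Q, F)$. The first reduction I would carry out is to argue that, modulo the Koszul-type relations $dQ\wedge dF$ and modulo $F$, we may adjust the choice of the $A_i$ so that $\sum A_i F_i$ becomes a multiple of $Q$; this is what lets us replace membership in $\widetilde{dQ\wedge dF}$ by membership in $(Q)$. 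Thus $\ker\alpha$ is identified with elements $\sum A_i F_i$ lying in $J_{F,3}\cap (Q)$, giving the first claimed isomorphism $\ker\alpha\cong J_{F,3}\cap (Q)$.

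Next I would establish the isomorphism $J_{F,3}\cap (Q)\cong (J_{F,5}:Q)$. The natural map here is multiplication by $Q$: given $P\in (J_{F,5}:Q)\subset S^3 V$, we have $QP\in J_{F,5}$, but I want an element of $J_{F,3}\cap (Q)$, so the correct direction is to send a degree-one element or to read the correspondence in degree $3$. More precisely, an element of $J_{F,3}\cap (Q)$ has the form $Q\cdot P$ with $P\in S^1 V$ and $QP\in J_{F,3}$; the condition $QP\in J_{F,3}$ is exactly $P\in (J_{F,3}:Q)$. By Lemma \ref{lem:MaximalIdeal}, which gives $(J_{F,k}:m)=J_{F,k-1}$ for $k\geq 3$, together with the nondegeneracy of the pairing $S^aV/J_F\otimes S^bV/J_F\to S^5V/J_F\cong\C$ for $a+b\leq 5$, I can translate between the colon ideals in different degrees and rewrite $(J_{F,3}:Q)$ in terms of $(J_{F,5}:Q)$. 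Multiplication by $Q$ then furnishes the desired isomorphism between $J_{F,3}\cap (Q)$ and $(J_{F,5}:Q)$, since $Q$ is a nonzerodivisor on the relevant subquotients by the same nondegeneracy.

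The main obstacle I anticipate is the bookkeeping in the first step: verifying that the relations $dQ\wedge dF$ together with $F$ genuinely allow the replacement of $\sum A_i F_i$ by an element of $(Q)$ without loss, i.e. that $\ker\alpha$ is not strictly smaller than $J_{F,3}\cap(Q)$. This requires checking that any ambiguity in writing $P=\sum A_i Q_i$ (the relations among the $Q_i$, namely the Koszul syzygies and the Euler relation) maps into the subspace we are quotienting by, so that the induced element $\sum A_i F_i\in S^3V/\widetilde{dQ\wedge dF}$ is well-defined and its vanishing is equivalent to $\sum A_iF_i\in(Q)$ after the $dQ\wedge dF$ adjustment. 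Concretely, the syzygies $Q_iQ_j-Q_jQ_i$ and the mixed relations are precisely engineered so that the paired combination $\sum A_iF_i$ is determined modulo $dQ\wedge dF$; once this is confirmed, the chain of equivalences $P\in\ker\alpha \iff \sum A_iF_i\in(Q)\iff \sum A_iF_i\in J_{F,3}\cap(Q)$ closes up. The remaining passage to $(J_{F,5}:Q)$ is then a formal consequence of Lemma \ref{lem:MaximalIdeal} and the duality pairing, and I expect no difficulty there.
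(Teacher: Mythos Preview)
Your first step is fine and lines up with what the paper does: the identification $\ker\alpha\cong J_{F,3}\cap(Q)$ is exactly the content of the sentence preceding the lemma, and your discussion of how the ambiguity in the $A_i$ is absorbed by $dQ\wedge dF$ and $F$ is the right mechanism. The paper's own argument packages this as a $3\times 3$ diagram built from Propositions~\ref{prop:TDefY} and~\ref{prop:TdefPairs}, and reads $\ker\alpha$ off as the kernel of the bottom map $V\xrightarrow{\;\cdot Q\;} S^3V/J_F$.

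Where your proposal goes wrong is the second isomorphism. The statement as printed contains a typo: the right-hand side should be $(J_{F,3}:Q)$, not $(J_{F,5}:Q)$. This is confirmed by the very next Proposition in the paper (``$(J_{F,3}:Q)=0$ and $\alpha$ is injective'') and by the later identification $\ker\alpha\cong (J_{F,3}:Q)\hookrightarrow V$. It is also forced by dimensions: $J_{F,3}\cap(Q)=\{QL:L\in V,\ QL\in J_{F,3}\}$ sits inside a $5$-dimensional space, whereas $(J_{F,5}:Q)\subset S^3V$ has codimension~$1$, hence dimension~$34$. So there is no isomorphism $J_{F,3}\cap(Q)\cong(J_{F,5}:Q)$ to prove, and your attempt to ``translate between the colon ideals in different degrees'' via Lemma~\ref{lem:MaximalIdeal} cannot succeed: that lemma concerns the maximal ideal $m$, not $Q$, and does not let you pass from $(J_{F,3}:Q)$ to $(J_{F,5}:Q)$.

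The correct second step is the trivial one you already noticed and then abandoned: $J_{F,3}\cap(Q)\cong(J_{F,3}:Q)$ via $QL\leftrightarrow L$, using only that $Q$ is a nonzerodivisor on $S^\bullet V$. With that correction your argument is complete and essentially parallel to the paper's diagram chase.
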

\begin{proof}
This follows by writing the diagram (\ref{eq:ComDiam}) for the cubic $3$-fold. By Propositions \ref{prop:TDefY} and \ref{prop:TdefPairs}, we have that 
\[\begin{tikzcd}
& & 0 \ar[d] & & \\
& & \frac{S^2 V}{Q}\ar[d] \ar[rd, "\sum A_i Q_i \mapsto \sum A_i F_i "] & & \\
0\ar[r] & V\ar[r] \ar[rd, "\cdot Q "] & \frac{S^3 V}{dQ\wedge dF, F} \ar[r] \ar[d] & \frac{S^3 V}{dQ\wedge dF, F,Q} \ar[r] & 0 \\
& & \frac{S^3 V}{J_F} \ar[d] & & \\
 & & 0 & &
\end{tikzcd}
\]
which can be completed as
\[\begin{tikzcd}
& & 0 \ar[d] & & \\
& & \frac{S^2 V}{Q}\ar[d] \ar[r, "="] & \frac{S^2 V}{Q} \ar[d] & \\
0\ar[r] & V\ar[r] \ar[d, " ="] & \frac{S^3 V}{dQ\wedge dF, F} \ar[r] \ar[d] & \frac{S^3 V}{dQ\wedge dF, F,Q} \ar[d] &  \\
& V\ar[r, "Q"]& \frac{S^3 V}{J_F} \ar[d] \ar[r] & \frac{S^3 V}{J_F,Q} & \\
 & & 0 & &
\end{tikzcd}
\]
\end{proof}
Using the first part of the Macaulay2 code we obtain. 
\begin{prop}\label{Prop:AlphaInjective} For generic $F,Q$, we have $(J_{F,3}:Q)=0$ and $\alpha$ is injective.
\end{prop}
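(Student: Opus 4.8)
The plan is to reduce the statement to the generic non-degeneracy of the cubic form $C$, which is already available. By the preceding lemma, $\ker\alpha\cong J_{F,3}\cap(Q)$, where $(Q)$ is the ideal generated by $Q$. Every degree-three element of $(Q)$ has the shape $QP$ with $P\in V$, and multiplication by $Q$ is injective on the polynomial ring, so $P\mapsto QP$ gives
\[
\ker\alpha\;\cong\;J_{F,3}\cap(Q)\;\cong\;(J_{F,3}:Q)=\{P\in V:\,QP\in J_{F,3}\}.
\]
Thus the two assertions of the proposition---that $(J_{F,3}:Q)=0$ and that $\alpha$ is injective---coincide, and it suffices to prove the vanishing. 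Since $J_F$ is an ideal, $QP\in J_{F,3}$ holds precisely when the product of the classes $[P]\in S^1V/J_{F,1}$ and $[Q]\in S^2V/J_{F,2}$ vanishes in $S^3V/J_{F,3}$; hence $(J_{F,3}:Q)$ is the kernel of the multiplication map
\[
\mu_Q\colon \frac{S^1V}{J_{F,1}}\xrightarrow{\ \cdot Q\ }\frac{S^3V}{J_{F,3}}.
\]

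Next I would pass to the transpose of $\mu_Q$ using the non-degenerate pairings $\tfrac{S^1V}{J_F}\times\tfrac{S^4V}{J_F}\to\tfrac{S^5V}{J_F}\cong\mathbb{C}$ and $\tfrac{S^2V}{J_F}\times\tfrac{S^3V}{J_F}\to\tfrac{S^5V}{J_F}\cong\mathbb{C}$, whose perfectness is exactly the statement recorded in the proof of Lemma \ref{lem:MaximalIdeal} (non-degeneracy in each factor when the degrees sum to at most $5$). Under the resulting identifications $\tfrac{S^4V}{J_F}\cong(\tfrac{S^1V}{J_F})^\ast$ and $\tfrac{S^2V}{J_F}\cong(\tfrac{S^3V}{J_F})^\ast$, the transpose of $\mu_Q$ is once more multiplication by $Q$, namely $\tfrac{S^2V}{J_{F,2}}\xrightarrow{\cdot Q}\tfrac{S^4V}{J_{F,4}}$; the only input is that the triple product $[a]\,[Q]\,[b]$ in $\tfrac{S^5V}{J_F}\cong\mathbb{C}$ is symmetric in $a,b\in S^1V$. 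This last map is precisely $m_Q$ of (\ref{eq:mq}), which factors through $S^2V/J_{F,2}$, so $\mu_Q$ is injective if and only if $m_Q$ is surjective. By the Corollary following Proposition \ref{prop:Jf4:Q}, surjectivity of $m_Q$ is equivalent to $C$ being non-degenerate. Assembling the chain, $(J_{F,3}:Q)=0$ if and only if $C$ is non-degenerate.

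Finally I would invoke Theorem \ref{thm:GenericSmooth}: for generic $X$ and $Y$ the cubic $C$ is smooth, and smoothness implies non-degeneracy by the implications noted after Lemma \ref{lem:CubicAreSmooth}. Therefore $(J_{F,3}:Q)=0$, and $\alpha$ is injective, for generic $F,Q$. The step I expect to be the main obstacle is the transpose computation of the middle paragraph: one must verify that the dual of $\mu_Q$ is genuinely multiplication by $Q$ and not merely an abstractly isomorphic map, which is where the symmetric (Gorenstein) structure of the Jacobian ring $S^\bullet V/J_F$ enters and must be used with care. An alternative that matches the paper's stated method is to sidestep the duality entirely and verify $(J_{F,3}:Q)=0$ for a single explicit smooth pair $(F,Q)$ in Macaulay2, then conclude by the openness of this condition on the irreducible parameter space of smooth pairs.
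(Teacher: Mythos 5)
Your proof is correct, but it takes a genuinely different route from the paper's. The paper disposes of this proposition in one line: the first part of the Macaulay2 code in the appendix computes the ideal quotient $(J_F:Q)$ for a random pair $(F,Q)$ and observes that it has no generators in degree one, whence $(J_{F,3}:Q)=0$ for that pair and hence for generic pairs by openness --- this is exactly the ``alternative'' you mention at the end. You instead reduce the statement to the generic non-degeneracy of $C$, which is already available from Theorem \ref{thm:GenericSmooth} together with the implication ``smooth $\Rightarrow$ non-degenerate'' noted after Lemma \ref{lem:CubicAreSmooth}. Your chain of identifications checks out: $(J_{F,3}:Q)$ is the kernel of $\cdot Q\colon S^1V/J_F\to S^3V/J_F$; its transpose under the perfect pairings $R_1\times R_4\to R_5\cong\C$ and $R_2\times R_3\to R_5\cong\C$ of the Gorenstein Jacobian ring is again multiplication by $Q$, i.e.\ the map $m_Q$ of (\ref{eq:mq}) factored through $S^2V/J_{F,2}$; and the corollary to Proposition \ref{prop:Jf4:Q} converts surjectivity of $m_Q$ into non-degeneracy of $C$. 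The transpose step you flag as the main obstacle is in fact harmless: it uses only that $[a\cdot Q\cdot b]\in R_5\cong\C$ is symmetric in $a,b$ (commutativity of the ring) and that the pairings are perfect, which follows from non-degeneracy in each factor together with $\dim R_1=\dim R_4=5$ and $\dim R_2=\dim R_3=10$. What your route buys is the sharper statement that $(J_{F,3}:Q)=0$ if and only if $C$ is non-degenerate, so the proposition holds on the whole locus of non-degenerate $C$ and requires no computation beyond what the paper has already done; what the paper's route buys is brevity. One small point: the lemma immediately preceding the proposition asserts $\ker\alpha\cong J_{F,3}\cap(Q)\cong(J_{F,5}:Q)$, where the last term should read $(J_{F,3}:Q)$; you implicitly work with the corrected form, which is the right one.
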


\subsubsection{Proof of the Theorem} Let us explain what is $\delta e_{Y/X}$ when $X$ is held constant. 

We have two complexes
\begin{equation}\label{eq:XcteA}
\begin{tikzcd}
 H^{2,1}(X)^*\otimes H^{2,0}(Y) \arrow[r]
& \arrow[d, phantom, ""{coordinate, name=Z}] H^0(N_{Y/X})^*\otimes H^{2,1}(X)^*\otimes H^{1,1}(Y)_{\NEW} \arrow[d,
"\nabla" above,
rounded corners,
to path={ -- ([xshift=2ex]\tikztostart.east)
|- (Z) [near end]\tikztonodes
-| ([xshift=-2ex]\tikztotarget.west)
-- (\tikztotarget)}] \\
 & \wedge H^0(N_{Y/X})^*\otimes H^{2,1}(X)^*\otimes H^{0,2}(Y)
\end{tikzcd}
\end{equation}
\begin{equation}\label{eq:XcteB}
H^{1,2}(X)^*\otimes H^{1,1}(Y)_{\NEW}\to H^{0}(N_{Y/X})^*\otimes H^{1,2}(X)^*\otimes H^{0,2}(Y)\to 0
\end{equation}
The cohomology at the middle term of (\ref{eq:XcteB}) is $H^{1,2}(X)\otimes (\ker \alpha)^*$, i.e. a map
\[\begin{tikzcd} \ker\alpha \arrow[r] \arrow[d, "\cong"] & H^{2,1}(X) \arrow[d,"="]\\
(J_{F,3}:Q) \arrow[r, hook] & V
\end{tikzcd}
\]
By Proposition \ref{Prop:AlphaInjective}, we have that for generic $F$ and $Q$, this piece has no information. 

For (\ref{eq:XcteA}), we need an element of 
$$H^{2,1}(X)^*\otimes \left(\frac{H^0(N_{Y/X})^*\otimes H^{1,1}(X)_{\NEW}}{\C \alpha} \right). $$
Now this maps, losing some information, to a map
\[\begin{tikzcd}[contains/.style = {draw=none,"\cong" {description},sloped}]
H^{2,1}(X)\arrow[r] \ar[d,contains] & H^0(N_{Y/X})^* \ar[d,contains]  &  \mkern-18mu  \mkern-18mu \mkern-18mu \otimes  \left(\frac{H^{1,1}(X)_{\NEW}}{\Ima \alpha} \right)  \ar[d,contains]\\
V &  \left( \frac{S^2 V}{Q} \right) &   \frac{S^3 V}{(J_{F,3},Q)}
\end{tikzcd}\]

This map is induced by multiplication 
\[\begin{tikzcd} V\otimes \frac{S^2 V}{Q} \ar{r} \ar[rd] & \frac{S^3 V}{Q} \ar[d] \\
& \frac{S^3 V}{(J_{F,3},Q)}
\end{tikzcd} 
\]
We get $\delta e_{Y/X}\not = 0$ for $X$ fixed provided $ (J_{F,3},Q)\subsetneq S^3 V $, i.e. $V \overset{\alpha} \to \frac{S^3 V}{J_{F,3}}$ is not surjective. But $\dim (\frac{S^3 V}{J_{F,3}})=10, \dim V=5$. We have proved. 
\begin{thm}\label{thm:NonVanDeltaXfixed} For $X$ a smooth cubic threefold fixed and $Y\in \abs{-K_X}$ smooth, we have that $\delta e_{Y/X}\not = 0 $. More precisely, the part of $\delta e_{Y/X}$ in (\ref{eq:XcteB}) is non-zero.\end{thm}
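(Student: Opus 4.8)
The plan is to establish $\delta e_{Y/X}\neq 0$ by exhibiting its component in the complex (\ref{eq:XcteB}) as a non-trivial multiplication map of Jacobian rings, so that the non-vanishing reduces to an elementary dimension count. First I would collect the explicit models available once $X$ is held fixed: here the parameter space is $S=H^0(N_{Y/X})$, and since $N_{Y/X}=\Oo_Y(2)$ one has $H^0(N_{Y/X})\cong S^2V/(Q)$, while $H^{2,1}(X)=V$, $H^{1,2}(X)=S^4V/J_{F,4}$, and $H^{1,1}(Y)_{\NEW}\cong S^3V/\widetilde{dQ\wedge dF}$ by Proposition \ref{prop:TDefY}. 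With these identifications the invariant $\delta e_{Y/X}$ splits, along the weight grading of $H^2(Y)$, into the two complexes (\ref{eq:XcteA}) and (\ref{eq:XcteB}), and I would locate the non-vanishing in the second one.

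Next I would trace $\delta e_{Y/X}$ through these models. Combining Proposition \ref{prop:DeltaA}, Corollary \ref{cor:NormalBundle}, and the map $\alpha\colon H^0(N_{Y/X})\to H^{1,1}(Y)_{\NEW}$ of the previous subsection, the component of $\delta e_{Y/X}$ recorded in (\ref{eq:XcteB}) is represented by the multiplication map
\[
V\otimes \frac{S^2V}{(Q)}\longrightarrow \frac{S^3V}{(J_{F,3},Q)}\cong \coker\alpha,
\]
which factors through $S^3V/(Q)$ and is exactly the multiplication of $H^{2,1}(X)=V$ by the normal directions $H^0(N_{Y/X})=S^2V/(Q)$, landing in the cokernel of $\alpha$ computed in the preceding lemma.

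To conclude I would show this multiplication map is non-zero, which forces the (\ref{eq:XcteB}) part of $\delta e_{Y/X}$, and hence $\delta e_{Y/X}$ itself, to be non-zero. Since $V\otimes S^2V\to S^3V$ is surjective, the map above is non-zero precisely when its target $S^3V/(J_{F,3},Q)\cong\coker\alpha$ is non-zero, i.e. when $(J_{F,3},Q)\subsetneq S^3V$. This is immediate from dimensions and uses only smoothness of $X$: for a smooth cubic threefold the partials $F_1,\dots,F_5$ form a regular sequence, so $\dim J_{F,3}=25$ and $\dim(S^3V/J_{F,3})=10$, whereas multiplication by $Q$ contributes an image of dimension at most $\dim V=5$. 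Hence $\dim S^3V/(J_{F,3},Q)\geq 10-5=5>0$, the cokernel is non-zero, and the class is non-trivial; note that no genericity of $(F,Q)$ is needed here, only smoothness of $X$, which matches the stated hypotheses.

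The main obstacle will be the bookkeeping in the second step: verifying that, under the chain of identifications from Propositions \ref{prop:TDefY}, \ref{prop:TdefPairs}, Corollary \ref{cor:NormalBundle}, and the exactness of the diagram computing $\ker\alpha$ and $\coker\alpha$, the abstract invariant really is represented by the concrete map $V\otimes S^2V/(Q)\to S^3V/(J_{F,3},Q)$, and that passing to $\coker\alpha$ does not accidentally kill the representing class. Once this identification is secured, the non-vanishing of the (\ref{eq:XcteB}) component is the single inequality $10>5$.
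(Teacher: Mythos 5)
Your argument is, in substance, exactly the paper's own proof: the same identifications via Propositions \ref{prop:TDefY} and \ref{prop:TdefPairs} and the computation of $\ker\alpha$ and $\coker\alpha$, the same representing multiplication map $V\otimes S^2V/(Q)\to S^3V/(J_{F,3},Q)\cong\coker\alpha$, and the same closing dimension count $\dim S^3V/J_{F,3}=10>5=\dim V$, including the correct observation that only smoothness of $X$, not genericity, is needed for this inequality.

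There is, however, one bookkeeping error, located precisely at the step you yourself flagged as the main obstacle. The component represented by the multiplication map lives in the complex (\ref{eq:XcteA}), not (\ref{eq:XcteB}): your target contains the factor $H^{1,1}(Y)_{\NEW}$, which occurs in the middle term $H^0(N_{Y/X})^*\otimes H^{2,1}(X)^*\otimes H^{1,1}(Y)_{\NEW}$ of (\ref{eq:XcteA}), whereas the middle term of (\ref{eq:XcteB}) is $H^0(N_{Y/X})^*\otimes H^{1,2}(X)^*\otimes H^{0,2}(Y)$. Indeed, the paper computes the middle cohomology of (\ref{eq:XcteB}) to be $H^{1,2}(X)\otimes(\ker\alpha)^*$ with $\ker\alpha\cong(J_{F,3}:Q)$, which vanishes for generic $(F,Q)$ by Proposition \ref{Prop:AlphaInjective} --- so the (\ref{eq:XcteB}) component cannot carry the non-vanishing; the paper's proof draws the non-zero class from (\ref{eq:XcteA}), after passing to the quotient by $\mathbb{C}\alpha$ and by $\Ima\alpha$. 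You inherited the reference to (\ref{eq:XcteB}) from the theorem statement itself, which appears to be a label slip relative to the body of the paper's proof. With the component relocated to (\ref{eq:XcteA}), your identification and the inequality $10>5$ complete the proof exactly as in the paper.
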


\subsection{Generic Torelli Theorem for pairs}
\begin{thm} Let $(X,Y)$ be a generic pair in $\Fg$, then the MHS on $H^3(X\setminus Y)$ determines the pair $(X,Y)$.
\end{thm}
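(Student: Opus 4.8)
The plan is to recover the pair $(X,Y)$ from the MHS on $H^3(X\setminus Y)$ by a two-step procedure: first recover $X$ (equivalently the cubic $F$ up to $\GL$), then recover $Y$ (equivalently $Q$) from the additional data carried by the mixed structure, specifically the cubic form $C$ and its derivative. The starting observation is that the MHS on $H^3(X\setminus Y)$ contains, as a sub-object via the exact sequence (\ref{eq:ExSeqHdg1}), the pure Hodge structure $H^3(X)$ together with its IVHS. By the classical generic global Torelli theorem for cubic threefolds (Donagi--Green \cite{DG84}, or the Clemens--Griffiths intermediate Jacobian together with the description of the Hodge filtration in \cite{CG80,G69}), the pure part $H^3(X)$ with its polarization and infinitesimal variation determines $X$ for generic $X$. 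So the first step produces $F$ up to projective equivalence.

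\emph{Second}, having fixed $X$ (and hence the coordinate ring data $S^\bullet V$, the Jacobian ideal $J_F$, and the identifications $H^{2,1}(X)=V$, $H^{1,2}(X)=S^4V/J_{F,4}$ of Section~\ref{sec:CubThree}), I would extract the cubic form $C\in S^3H^{2,1}(X)^*$ from the second-order jet of the period map, as anticipated in the introduction. Concretely, the derivative of the period map gives the IVHS data $H^1(T_X)\to S^2H^{2,1}(X)^*$, and pairing with the anticanonical direction singled out by $Y$ inside $H^3(X\setminus Y)$ produces $C$; this is exactly the mechanism of Section~\ref{ss:DeltaInvariant}, where $\delta e_b$ is shown in Corollary~\ref{cor:NormalBundle} to be the map $H^{2,1}(X)\to S^2H^{1,2}(X)$ given by the first partials of $C$, and Theorem~\ref{thm:NonVanDeltaXfixed} guarantees this data is genuinely present (non-zero) in the mixed structure. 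The point is that the $2$-jet of the period map for $X\setminus Y$, beyond the $1$-jet that already recovers $X$, encodes $C$.

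\emph{Third}, I would invert the assignment $Q\mapsto C$ to recover $Q$, and hence $Y$, from $C$. By Lemma~\ref{lem:ZeroCubic} the map $S^2V/J_{F,2}\to S^3V^*$, $Q\mapsto C$, is injective, so $Q$ is determined by $C$ modulo $J_{F,2}$; but modifying $Q$ by an element of $J_{F,2}$ changes $Y=\{F=Q=0\}$ only within the same pair up to the ambiguity that is already accounted for in the moduli stack $\Fg$ (since adding $\sum A_iF_i$-type terms corresponds to the Jacobian/automorphism directions). More sharply, Theorem~\ref{thm:GenericSmooth} tells us $C$ is smooth for generic $(X,Y)$, so $C$ is non-degenerate and its full symbol is available; combining the injectivity of $Q\mapsto C\bmod J_{F,2}$ with the genericity of $Q$ recovers $Q$ up to the residual scaling, hence recovers $Y$. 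This is the sense in which the proof is \emph{constructive}: run the inverse of $Q\mapsto C$.

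\emph{The main obstacle} I expect is the rigorous extraction of $C$ from the intrinsic mixed Hodge data in the second step, and controlling exactly \emph{which} second-order information of the period map is canonically available once one has only the abstract MHS (not a chosen VMHS over a base $S$). The delta-invariant $\delta e_{Y/X}$ lives in a quotient (the middle cohomology of (\ref{eq:SSExt})), so one must check that the ambiguities quotiented out do not destroy the information needed to solve for $Q$; equivalently, one must verify that the part of $\delta e_{Y/X}$ identified in Theorem~\ref{thm:NonVanDeltaXfixed} as living in (\ref{eq:XcteB}), together with the cubic form itself, suffices to pin down $Q$ modulo the allowed Jacobian ambiguity. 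Making the word ``generic'' precise — i.e.\ exhibiting a dense open locus in $\Fg$ on which the composite recovery map $\big(\text{MHS on }H^3(X\setminus Y)\big)\mapsto (X,Y)$ is well-defined and injective — will rely on the non-degeneracy statements (Lemma~\ref{lem:PropCubics}, Lemma~\ref{lem:CubicAreSmooth}) verified via Macaulay2 in Theorem~\ref{thm:GenericSmooth}, and on Proposition~\ref{Prop:AlphaInjective} to ensure no further degeneracy in the relevant multiplication maps.
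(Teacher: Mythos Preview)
Your proposal differs from the paper's argument in both of its main steps, and the second step contains a genuine gap.

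\textbf{Recovering $X$.} You invoke the global Torelli theorem for cubic threefolds (Clemens--Griffiths) applied to the pure sub-Hodge-structure $W_3H^3(X\setminus Y)=H^3(X)$. This is legitimate but is not the paper's route. The paper first extracts the cubic form $C\in S^3H^{2,1}(X)^*$ (up to scalar) from the first-order IVMHS data, identifying $H^{2,1}(X)\cong\ker(T_{\Fg}\to T_{\Kg})$ and composing with the period map of $X$. It then uses a genuinely \emph{second-order} invariant: differentiating the annihilator $C^\perp=(J_{F,5}:Q)\subset S^3V$ along the normal directions $\nu\in H^0(N_{Y/X})\cong\ker(T_{\Fg}\to T_{\Def(X)})$, it shows
\[
\bigl\{P\in(J_{F,5}:Q):\tfrac{\partial P}{\partial\nu}=0\ \text{for all}\ \nu\in H^0(N_{Y/X})\bigr\}=(J_{F,5}:m^2)=J_{F,3},
\]
and then recovers $F$ from $J_{F,3}$ via Mather--Yau. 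This derivative of $C$ is exactly the ``$2$-jet'' content advertised in the introduction; your outline invokes the $2$-jet only as the source of $C$ itself and never differentiates $C$.

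\textbf{Recovering $Q$ --- the gap.} Your claim that modifying $Q$ by an element of $J_{F,2}$ ``is already accounted for in the moduli stack $\Fg$'' because it ``corresponds to the Jacobian/automorphism directions'' is false. The partials $F_i=\partial F/\partial z_i$ do not vanish on the smooth hypersurface $X$, so $Q$ and $Q+\sum a_iF_i$ cut out genuinely different anticanonical divisors $Y,Y'\subset X$; since $\Aut(X)$ is trivial for generic $X$, the pairs $(X,Y)$ and $(X,Y')$ are distinct in $\Fg$. As $\dim J_{F,2}=5$, the injectivity in Lemma~\ref{lem:ZeroCubic} leaves a five-parameter family of candidate $Q$'s that neither smoothness of $C$ nor ``genericity of $Q$'' resolves. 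The paper instead argues, by a separate dimension count once $F$ is known, that the codimension-one subspace $(J_{F,5}:Q)\subset S^3V$ already determines $Q$ up to scalar; your outline does not reach any analogue of this step.
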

\begin{proof}
From the sequence (\ref{eq:DeltaInvFano}) we can recover $H^3(X)$, $H^2(Y)_{\NEW}$ and their polarizations from the MHS on $H^3(X,Y)$.

We therefore can recover the maps $T_{\Fg} \to T_{D_X}$ and $T_{\Fg}\to T_{D_Y}$ where $D_X$ and $D_Y$ are the period domains for $X$ and $Y$ respectively. It is enough to know infinitesimal Torelli for $X$ and for $Y$ to conclude that the kernels of the maps above are the kernels of $T_{\Fg}\to T_{\Def(X)}$ and $T_{\Fg}\to T_{\Kg}$. Recall that $H^{2,1}(X)=\ker(T_{\Fg}\to T_{D_Y})$ by (\ref{eq:BES4}).

The period map for $X$ gives $H^1(T_X)\to S^2H^{2,1}(X)^*$. To get the cubic form $C$, we need $$H^1(\Omega_X^2)\overset{s_Y}\to H^1(T_X)$$ with $s_Y\in H^0(-K_X)$.

We have 
\[
\begin{tikzcd}
 H^1(T_X(-Y))\cong \ker(T_{\Fg}\to T_{Kg}) \ar[rd] \ar[r, hook] & T_{\Fg} \ar[d] \\
 & T_{\Def(X)}
\end{tikzcd}
\]
We know that $H^1(T_X(-Y))\cong H^1(\Omega_X^2)$, with the isomorphism determined up to a constant.
So we have 

\[
\begin{tikzcd}
 H^1(\Omega_X^2) \ar[rd] \ar[r] & H^1(T_X) \ar[d] \\
 & S^2H^1(\Omega_X^2)^*
\end{tikzcd}
\]
is determined up to a constant, and hence: we can determine $C\in S^3 H^1(\Omega_X^2)^*$ up to a constant. Now $C^\perp \subset S^3 H^2(\Omega_X^1), C^\perp= (J_{F,5}:Q)\subset S^3 V$ by Lemma \ref{lem:spanC}.

Recall that for $X,Y$ generic, we have that $H^0(T_X|_Y)=0$ by Proposition \ref{Prop:AlphaInjective} and that $\ker(T_{\Fg}\to T_{D_X})\cong \frac{H^0(N_{Y/X})}{H^0(T_X|_Y)}$. Therefore, by using infinitesimal Torelli for $X$ we can recover $H^0(N_{Y/X})$.

 We now are going to use second order information:

For $\nu\in H^0(N_{Y/X})$, we want to look at 
$$\frac{\partial}{\partial \nu}(C^\perp)=\frac{\partial}{\partial \nu} ((J_{F,5}:Q)) $$

Now, because $X$ is staying fixed, we may regard $H^{2,1}(X)$ as a fixed space, and hence the isomorphism of $H^{2,1}(X)$ with $V$ is fixed.
$$\frac{\partial}{\partial \nu}((J_{F,5}:Q))\in \Hom((J_{F,5}:Q),\frac{S^3 V} {(J_{F,5}:Q)}) $$

\begin{lem} The following equality holds
$$\{P\in (J_{F,5}:Q)\mid \frac{\partial P}{\partial \nu}=0 \text{ for all }\nu \in H^0(N_{Y/X})\}=(J_{F,5}:m^2). $$
\end{lem}
\begin{proof}
 Suppose that $QP=\sum A_i F_i$, so $\frac{\partial Q}{\partial \nu} P=\sum_i \frac{\partial A_i}{\partial \nu} F_i$ this implies that  $\frac{\partial Q}{\partial \nu} P \in J_{F,5}$.
 Now we know 
 $$H^0(N_{Y/X})\cong H^0(\Oo_Y(2))\cong \frac{S^2 V}{Q}  $$
 and hence
 $$\frac{\partial P}{\partial \nu}=0 \text{ for all } \nu\in H^0(N_{Y/X}) \iff P\in (J_{F,5}:m^2) $$
\end{proof}

 Recall that 
$$S^2V\otimes \frac{S^3 V}{J_{F,3}}\to \frac{S^5 V}{J_{F,5}} $$
is non-degenerate in $\frac{S^3 V}{J_{F,3}}$, so
$$(J_{F,5}:m^2)=J_{F,3}, $$
this is, using this second order information, we can recover $J_{F,3}\subset S^3 V$.

 Now, using Mather-Yau's theorem we can recover $F$ from $J_F$.
 
Let us explain the dependence on $\GL(V)$ in order to differentiate as we have done:
$$S^3 H^{2,1}(X)\to H^0(-K_X)^* $$
is intrinsic to $X$, and $s_Y\in H^0(-K_X)$ is intrinsic to $(X,Y)$, hence $S^3 H^{2,1}(X)\overset{C}\to \C $ (up to a constant) is intrinsic to $(X,Y)$. It does not depend on how $X$ sits in $\mathbb{P}^4$ as a cubic. Likewise $H^0(N_{Y/X})\cong H^0(-K_X)/\mathbb{C}s_Y$ is intrinsic to $(X,Y)$, so 
$$H^0(N_{Y/X})\to \frac{S^3 H^{2,1}(X)^*}{\mathbb{C}C} $$
is intrinsic to $(X,Y)$.

Now \begin{align*}
 <\frac{\partial c}{\partial\nu},P>=0 \quad \forall \nu\in H^0(N_{Y/X}) \iff P\in \ker(S^3H^{2,1}(X)\to H^0(-K_X)^*)
\end{align*}

Now, choosing an embedding $X\subset \mathbb{P}^4$ as a cubic, 
$$\ker(S^3 H^{2,1}(X)\to H^0(-K_X)^*)\cong J_{F,3}$$ 
and we recover $J_{F,3}$ and hence $F$.

 We now know $F$, up to a constant, hence $X$. We also know $(J_{F,5}:Q)\subset S^3 V$

\begin{lem} We can recover $Q$ from $F$ and $(J_{F,5}:Q)$.
\end{lem}
\begin{proof}
We may assume that $Q$ is irreducible. Now if 
$$(J_{F,5}:Q)=(J_{F,5}:Q') $$
for $Q,Q'$ linearly independent, then

$$Q(J_{F,5}:Q)\subseteq (Q)\cap (Q')=(QQ')\subseteq S^5 V $$
But $(QQ')$ has dimension $5=\dim V$ in $S^5 V$, whereas $(Q)$ has dimension $35=\dim S^3 V$, and $(J_{F,5}:Q)$ has $\codim 1$ in $(Q)$ because $J_{F,5}$ has $\codim 1$ in $S^5 $. Hence $\dim (J_{F,5}:Q)= 34$.

This is a contradiction, so $Q'=c Q$.
\end{proof}
We thus have determined $F$ and $Q$ in $S^\ast H^{2,1}(X)$.

\end{proof}

\appendix
\section{Macaulay2 code}\label{appendix:one}
This is the code used in Macaulay2 for the proof of Theorem \ref{thm:GenericSmooth}.

\begin{verbatim}
KK=ZZ/31991
R=KK[a,b,c,d,e]
F=random(R^1, R^{-3})
Q=ideal(random(R^1, R^{-2}))
FJac = ideal(jacobian(F))
I2 =quotient(FJac,Q)
m=matrix{{a,b,c,d,e}}
m2=symmetricPower(2,m)
m3=symmetricPower(3,m)
m4=symmetricPower(4,m)
m5=symmetricPower(5,m)
m6=symmetricPower(6,m)
L = intersect(I2,ideal(m3))
Lgen=generators(L)
Ldiff= diff(transpose(m3),Lgen)
Lker= kernel(transpose(Ldiff))
C=m3*generators(Lker)
V=diff(transpose(m2), generators(I2))
W=generators(kernel(transpose(V)))
U=ideal(m2*W)
U5= generators(intersect(U, ideal(m5)))
Y = diff(transpose(m5),U5)
Quin= generators(kernel(transpose(Y)))
Quintic= m5*Quin
Z= intersect(I2,ideal(m6))
\end{verbatim}

We explain below the code.
\begin{enumerate}
\item \begin{verbatim}
KK=ZZ/31991
R=KK[a,b,c,d,e]
\end{verbatim}  \noindent We work in $\mathbb{Z}/31991$ so coefficients do not get too long. Now we work on $\mathbb{Z}/31991[a,b,c,d,e]$, polynomial ring in $5$ variables.
\item \begin{verbatim}
F=random(R^1, R^{-3})
Q=ideal(random(R^1, R^{-2}))
\end{verbatim} \noindent We get $F,Q$ random polynomials of degree $3$ and $2$.
\item \begin{verbatim}
FJac = ideal(jacobian(F))
\end{verbatim} \noindent $FJac$ is the ideal $J_F$.
\item \begin{verbatim}
I2 =quotient(FJac,Q)
\end{verbatim} \noindent  $I2$ is $(J_F:Q)$. It consists of $10$ graded generators.
\item \begin{verbatim}
m=matrix{{a,b,c,d,e}}
m2=symmetricPower(2,m)
m3=symmetricPower(3,m)
m4=symmetricPower(4,m)
m5=symmetricPower(5,m)
m6=symmetricPower(6,m)
\end{verbatim} \noindent This sets up matrices where entries are the monomials of degree $2, \ldots,6$.
\item  \begin{verbatim}
L = intersect(I2,ideal(m3))
Lgen=generators(L)
\end{verbatim} \noindent This computes the elements of $(J_F:Q)$ of degree $\geq 3$. As it happens, this is generated by cubics. There are $34$ of these in $S^3 V$, which has dimension $35$.
\item \begin{verbatim}
Ldiff= diff(transpose(m3),Lgen)
Lker= kernel(transpose(Ldiff))
C=m3*generators(Lker)
\end{verbatim} \noindent This is the main idea. We want to compute the unique (up to scalars) $C\in S^3 V^*$ that is orthogonal to the $34$ cubics found in the previous step. If $G_1,\ldots, G_{34}$ are these cubics, the first line computes the $35\times 34$ matrix whose rows are 
$$\frac{\partial^3 G_1}{\partial {z^{*}}^I}, \ldots, \frac{\partial^3 G_{34}}{\partial {z^{*}}^I}.$$
The entries are integers. The next line computes the kernel of the transpose - this kernel is of dimension one and is a vector of $35$ integers. There third line of this group converts these $35$ integers back into a cubic - this is $C$ (up to a constant.
\item \begin{verbatim}
V=diff(transpose(m2), generators(I2))
W=generators(kernel(transpose(V)))
U=ideal(m2*W)
\end{verbatim} \noindent This does something similar to the $10$ quadrics that generate $I_2$. The $5$-dimensional subspace of $S^2 V^*$ that is orthogonal to these $10$ quadrics is $\Span(\frac{\partial C}{\partial a}, \ldots, \frac{\partial C}{\partial e})$, i.e. $J_{C,2}$.
\item \begin{verbatim}
U5= generators(intersect(U, ideal(m5)))
Y = diff(transpose(m5),U5)
Quin= generators(kernel(transpose(Y)))
Quintic= m5*Quin
\end{verbatim} \noindent This computes the generators of $J_C\cap S^5 V^*$. These have $\codim 1$ in $S^* V^*$. Then it computes the one element of $S^5 V$ that is orthogonal to these, this is the quintinc.
\item \begin{verbatim}
Z= intersect(I2,ideal(m6)) 
\end{verbatim} This computes the generators of $J_C\cap S^6 V^*$. If $C$ is smooth, $S^6 V^*/ J_C=0$. If $C$ is singular, $J_C\cap S^7 V^*$ is contained in the set of sextics vanishing on the  singular locus of $C$. We check that $Z=J_C\cap S^6 V^*$ equals $S^6 V^*$, so $C$ is smooth.

\end{enumerate}

\bibliography{sample}
\bibliographystyle{amsalpha}
\end{document}